\documentclass[10pt, english, a4paper, nothms]{amsart}
\usepackage[T1]{fontenc}
\usepackage[english]{babel}
\usepackage{lmodern}
\usepackage{mathtools} %
\usepackage{amssymb}
\usepackage[all]{xy}
\usepackage[hidelinks,hyperindex,pdfencoding=unicode]{hyperref}
\xyoption{2cell}
\UseAllTwocells
\usepackage{enumitem}
\usepackage[margin=3cm]{geometry} %
\def\emailaddrname{{\itshape \quad E-mail}}
\def\urladdrname{{\itshape \quad URL}}

\theoremstyle{plain}
\newtheorem{theorem}{Theorem}[section]
\newtheorem*{theorem*}{Theorem}
\newtheorem{prop}[theorem]{Proposition}
\newtheorem*{prop*}{Proposition}
\newtheorem{lemme}[theorem]{Lemma}
\newtheorem{coro}[theorem]{Corollary}

\theoremstyle{remark}
\newtheorem{remark}[theorem]{Remark}
\newtheorem{example}[theorem]{Example}
\theoremstyle{definition}
\newtheorem{defin}[theorem]{Definition}
\newtheorem*{defin*}{Definition}
\newtheorem{paragr}[theorem]{}
\numberwithin{equation}{theorem}

\renewcommand\leq\leqslant
\renewcommand\geq\geqslant

\newcommand\Z{\mathbb{Z}}
\newcommand\N{\mathbb{N}}
\newcommand\G{\mathbb{G}}
\newcommand\Grefl{\mathbb{G}_r}

\newcommand{\Set}{{\mathcal{S}\mspace{-2.mu}\it{et}}}

\newcommand{\Cat}{{\mathcal{C}\mspace{-2.mu}\it{at}}}

\newcommand{\Ch}{{\mathcal{C}\mspace{-2.mu}\it{h}_{+}}}

\let\limind\varinjlim
\let\limproj\varprojlim
\newcommand{\Ob}{\operatorname{\mathsf{Ob}}}
\newcommand{\Arr}{\operatorname{\mathsf{Arr}}}
\newcommand{\Hom}{\operatorname{\mathsf{Hom}}}
\newcommand{\Homi}{\operatorname{\kern.5truept\underline{\kern-.5truept\mathsf{Hom}\kern-.5truept}\kern1truept}}
\newcommand{\id}{\mathrm{id}}

\DeclareMathOperator{\op}{\mathsf{op}}
\newcommand{\pref}[1]{{\widehat{ #1 }}}
\newcommand{\tranche}[2]{#1/{#2}}

\DeclareMathOperator{\ev}{ev}

\newcommand{\W}{\mathcal{W}}
\newcommand{\M}{\mathcal{M}}

\makeatletter
\newcommand{\hocolim@}[2]{%
  \vtop{\m@th\ialign{##\cr
    \hfil$#1\operator@font holim$\hfil\cr
    \noalign{\nointerlineskip\kern1.5\ex@}#2\cr
    \noalign{\nointerlineskip\kern-\ex@}\cr}}%
}
\newcommand{\hocolim}{%
  \mathop{\mathpalette\hocolim@{\rightarrowfill@\textstyle}}\nmlimits@\nolimits
}
\newcommand{\holim@}[2]{%
  \vtop{\m@th\ialign{##\cr
    \hfil$#1\operator@font holim$\hfil\cr
    \noalign{\nointerlineskip\kern1.5\ex@}#2\cr
    \noalign{\nointerlineskip\kern-\ex@}\cr}}%
}
\newcommand{\holim}{%
  \mathop{\mathpalette\holim@{\leftarrowfill@\textstyle}}\nmlimits@
}
\makeatother

\newcommand{\Hotab}{\operatorname{\mathsf{Hotab}}}

\newcommand{\Hot}{\operatorname{\mathsf{Hot}}}

\newcommand{\orient}{\mathcal{O}}

\newcommand{\Ab}{{\mathsf{Ab}}}

\newcommand{\ab}{{\mathsf{ab}}}

\newcommand{\Tot}{\operatorname{\mathsf{Tot}}}

\DeclareMathOperator{\Add}{\mathrm{Add}}
\newcommand{\Addinf}{\operatorname{\mathrm{Addinf}}}
\newcommand{\Homadd}{\operatorname{\kern.5truept\underline{\kern-.5truept\mathsf{Hom}\kern-.5truept}\kern1truept_{\mathrm{add}}}}
\newcommand{\Homaddinf}{\operatorname{\kern.5truept\underline{\kern-.5truept\mathsf{Hom}\kern-.5truept}\kern1truept_{\mathrm{addinf}}}}
\newcommand{\Wh}[2]{\Z_{#1}^{\left(#2\right)}}
\newcommand{\Whf}[1]{\operatorname{\mathsf{Wh}_{#1}}}

\renewcommand{\H}[2]{\operatorname{{H}}(#1,#2)}

\newcommand{\Hf}[1]{\operatorname{H_{#1}}}

\newcommand{\prefab}[1]{\widehat{ #1 }^{\mathsf{ab}}}

\newcommand{\Wab}{\W^{\ab}}

\DeclareMathOperator{\im}{Im}
\DeclareMathOperator{\U}{U}
\DeclareMathOperator{\dk}{\mathsf{C}_N}

\DeclareMathOperator{\wcat}{\omega\Cat}

\newcommand{\cub}{\square}

\newcommand\EnsSimp{\widehat{\Delta}}

\newcommand{\nerf}{\mathsf{N}}

\let\epsilon\varepsilon

\newcommand{\mdvirg}{\text{ ,}\hspace{10pt}}
\newcommand\zbox[1]{\makebox[0pt][l]{#1}}
\newcommand\pbox[1]{\zbox{\quad #1}} %

\newcommand{\overbar}[1]{\mkern 1.5mu\overline{\mkern-1.5mu#1\mkern-1.5mu}\mkern 1.5mu}
\renewcommand{\tilde}{\widetilde}

\title[A Homotopical Dold-Kan correspondence for $\Theta$ and other test
categories]{%
A Homotopical Dold-Kan correspondence for Joyal's category $\Theta$ and other test categories
}
\author{Léo Hubert}
\address{Léo Hubert, Aix-Marseille Univ,
         CNRS, I2M,
         Marseille,
         France}
\email{leo.hubert.math@mailo.com}
\urladdr{https://leohubert.pages.math.cnrs.fr}
\begin{document}

\keywords{abelian presheaves, test categories, homology, Dold-Kan correspondence.}
\subjclass{18F20, 18G31, 18G90, 18N40, 55U35}
\begin{abstract}
We prove that for any test category $A$, in the sense of Grothendieck,
satisfying a compatibility condition between homology equivalences and weak
equivalences of presheaves, the homotopy category of abelian presheaves on $A$
is equivalent to the non-negative derived category of abelian groups. This
provides a homotopical generalization of the Dold-Kan correspondence for
presheaves of abelian groups over a wide range of test categories.
This equivalence of homotopy categories comes from a Quillen equivalence for a
model structure on abelian presheaves that we introduce under these conditions.
We then show that this result applies to Joyal's category $\Theta$.
\end{abstract}

\maketitle

\section*{Introduction}
In the first chapters of his $1983$ manuscript \emph{Pursuing Stacks}
\cite{pursuingstacks}, Grothendieck introduced the notion of \emph{test
categories} to get an axiomatic characterization of small categories whose
category of presheaves modelizes, in a canonical way, the category~$\Hot$ of
topological spaces up to weak homotopy equivalences. This theory has since been
described and developed further, notably
in~\cite{maltsiniotis2005},~\cite{cisinskipref},~\cite{jardine2006categorical},~\cite{cisinski2011theta},
\cite{ara2019DendroidalTest} and \cite{ara2012thetatest}. 

However, Chapters V to VII of \emph{Pursuing Stacks} have not met the same
enthusiasm yet. In these chapters, Grothendieck introduced a formalism to study
the homology theory of abelian presheaves on small categories, different from
the one used in functor homology (see for example
\cite{pirashvili2003introduction}). He also conjectured a homotopical
generalization of the Dold-Kan correspondence for abelian presheaves on test
categories. The present article aims to expose the basics of this formalism and
to prove Grothendieck's conjecture, under additional conditions applying notably
to Joyal's category $\Theta$ \cite{joyal1997disks}.

\subsection*{A brief overview of test categories}
In this context, the category $\Hot$ is realized, using Illusie-Quillen's
theorem \cite{illusie1971cotangent}, as the Gabriel-Zisman localization of the
category $\Cat$ of small categories by the class~$\W_\infty$ of functors whose
image by the nerve functor is a simplicial weak equivalence. Given any small
category $A$, the functor
\[
  i_A : \pref{A} \to \Cat \mdvirg X \mapsto \tranche{A}{X}
\]
sending every presheaf to its category of elements allows to define a class of
weak equivalences of presheaves denoted by $\W_A$, consisting of the natural
transformations in $\pref{A}$ whose image by the functor $i_A$ is in
$\W_\infty$. Grothendieck denotes by $\Hot_A$ the localization of~$\pref{A}$ by
$\W_A$, and defines a \emph{pseudo-test category} to be a category $A$ such that
the functor $\Hot_A \to \Hot$ induced by $i_A$ is an equivalence of categories.

However, this notion is too general for Grothendieck's purpose of finding
elegant characterizations, and he gradually restricts his scope. First, he
defines \emph{weak test categories} by asking, in addition, for the right
adjoint $i_A^* : \Cat \to \pref{A}$ of $i_A$ to preserve weak equivalences. 
He then manages to find very simple characterizations of categories~$A$ whose
slices $\tranche{A}{a}$ are weak test categories for every object $a$ of $A$,
that he calls \emph{local test categories}. Moreover, any local test category
with trivial homotopy groups is also a weak test category: \emph{test
categories} are then defined to be categories which are both local and weak test
categories. Finally, \emph{strict test categories} are test categories $A$ such
that the functor $i_A$ commutes, up to weak equivalences, with the product of
presheaves.

Important examples of test categories include the category $\Delta$ of non-empty
finite ordinals, the cubical category $\cub$ and its variations with connections
\cite{maltsiniotis2009cubique}, and the dendroidal category
$\Omega$~\cite{ara2019DendroidalTest}. A particularly important example is given
by the category $\Theta$ who, besides being a strict test
category~\cite{cisinski2011theta}, is deeply linked to both strict
$\omega$-categories~(\cite{makkai2001duality},\cite{berger2002cellular}) and
weak higher
categories~(\cite{joyal1997disks},\cite{batanin1998monoidal},\cite{rezk2010weak},\cite{ara2014quasi}).

\subsection*{Linearization of homotopy types}
The homology of a small category is traditionally defined as the homology of its
simplicial nerve, and we can therefore define the homology of a presheaf on any
small category as the homology of its category of elements. However, this
process is rather complex, and gives a preeminent role to the category~$\Delta$. 
In the context of test categories, Grothendieck asks in \cite{pursuingstacks} a
handful of questions regarding what he calls the \emph{linearization of homotopy
types}.

Denote by $\Hotab$ the non-negative derived category of abelian groups, and
by~$\prefab{A}$ the category of abelian presheaves on $A$.
\begin{enumerate}
\item Given a small category $A$, can we define a canonical functor
\[
  \Hf{A} : \prefab{A}\to\Hotab
\]
  so that the following diagram 
\[
\xymatrix{
\pref{A} \ar[d]_-{\Z} \ar[r]^-{i_A} & \Cat \ar[r]^-{\nerf} &
\pref{\Delta} \ar[r]^-{\dk} & \Ch(\Ab) \ar[d]^{can} \\
\prefab{A} \ar[rrr]_{\Hf{A}} &&& \Hotab \pbox{,}
} 
\]
where $\dk$ is the normalized chain complex functor and $\nerf$ is the
simplicial nerve, is commutative up to a natural isomorphism? 
\end{enumerate}
We introduce in this paper such a functor, defined, following Grothendieck's
ideas, as the left derived functor of the colimit functor. 

This allows to define a class of weak equivalences of abelian presheaves on $A$
that we denote by $\Wab_A$, whose elements are the morphisms sent to
isomorphisms by the functor $\Hf{A}$. Note that, in the case of simplicial
abelian groups, one usually takes as weak equivalences morphisms whose
underlying morphism of simplicial sets is a weak equivalence. A very important
observation in our context is the fact that the functor $\dk : \prefab{\Delta}
\to \Ch(\Ab)$ sends these morphisms to quasi-isomorphisms.

\begin{enumerate}[resume]
\item Let $\U : \prefab{A}\to\pref{A}$ be the forgetful functor.
Can we exhibit conditions on $A$ to ensure that the functor $\Hf{A}$ sends
morphisms of $\U^{-1}\W_A$ to isomorphisms in $\Hotab$? 
\end{enumerate}
We will focus on categories, such as $\Delta$, for which the classes $\Wab_A$
and $\U^{-1}\W_A$ coincide. We call this the \emph{strong Whitehead condition}.
We won't study in this paper categories satisfying only the
inclusion~$\U^{-1}\W_A \subset \Wab_A$.

Finally, the functor $\dk : \prefab{\Delta}\to\Ch(\Ab)$ has another important
property, expressed by the Dold-Kan theorem: it is an equivalence of categories
(\cite{dold1958homology},\cite{kan1958functors}). 
In general, abelian presheaves on test categories are not equivalent to
non-negative chain complexes of abelian groups. 
For example, the category of bisimplicial abelian groups is equivalent to the
category of double chain complexes of the first quadrant.
However, a reasonable generalization of the Dold-Kan correspondence would be an
answer to the following question:
\begin{enumerate}[resume]
\item If the answer to question $(2)$ is positive, we get a functor 
\[
\overline{\Hf{A}} : \Hotab_A^{\U} \to \Hotab
\]
where $\Hotab_A^{\U}$ is the localization of $\prefab{A}$ by the class
$\U^{-1}\W_A$.
Is it true that if $A$ is a test category, then the functor $\Hf{A}$ is an
equivalence of categories? 
\end{enumerate}

The present article shows that the answer to this last question is positive for
categories satisfying the \emph{strong} Whitehead condition.

\begin{theorem*}[\ref{thm:WhiteheadTestHomPseudoTest}]
Let $A$ be a test category satisfying the strong Whitehead condition. 
Then the functor $\overline{\Hf{A}} : \Hotab_A^{\U} \to \Hotab$ is an
equivalence of categories. 
\end{theorem*}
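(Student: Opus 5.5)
The plan is to compare $\Hotab_A^{\U}$ with $\Hotab$ through an adjunction with $\Ch(\Ab)$, and then to transport the problem to the simplicial case, where the classical Dold-Kan theorem finishes the argument.

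First I would set up the basic adjunction. Let $\Hf{A}$ be the left derived functor of $\limind : \prefab{A}\to\Ab$, extended to complexes. Concretely, I would resolve $F$ by the bar-type construction and take $\Hf{A}(F) = \dk(\Z\text{-linear nerve of } \tranche{A}{F}\text{ with coefficients})$, so that $\Hf{A}(\Z X) \simeq \dk\Z\nerf i_A(X)$. In the other direction I would build a right adjoint at the homotopical level, namely $K \mapsto (a \mapsto \Gamma$-style evaluation of $K$ against $\Hf{A}(\Z a))$. The most concrete choice is $R(K)_a = \Hom(\dk \Z\nerf(\tranche{A}{a}), K)$, truncated appropriately; equivalently $R = i_A^*$ composed with the Dold-Kan inverse $\Gamma : \Ch(\Ab)\to\prefab{\Delta}$, then restricted along $i_A^*$. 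This gives a pair
\[
L=\Hf{A} : \prefab{A} \rightleftarrows \Ch(\Ab) : R,
\]
with $R(K)$ the presheaf $a\mapsto \Hom_{\Cat}(\tranche{A}{a}, \cdot)$ applied to $\Gamma K$ viewed as a category-valued object via $i_\Delta^*$. The idea is that $R$ is $i_A^*$ applied to the simplicial abelian group $\Gamma K$, by means of the abelian version of $i_A^*\circ\nerf^{*}$.

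Second, I would check that $\overline{\Hf{A}}$ is well defined and that $R$ preserves weak equivalences. The first point is exactly the hypothesis $\Wab_A=\U^{-1}\W_A$. For the second, $\U R(K) = i_A^* (\text{underlying simplicial set of } \Gamma K)$ up to the comparison $\Delta\to\Cat$. Since $A$ is a (weak) test category, $i_A^*$ preserves weak equivalences, and $\Gamma$ sends quasi-isomorphisms to simplicial weak equivalences. Hence $R$ sends quasi-isomorphisms into $\U^{-1}\W_A$, and so both functors descend to the localizations, with the adjunction descending as well.

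Third, I would check the two units. For the counit $L R K\to K$, I would compute $H(LRK)\cong H(\dk\Z\nerf i_A i_A^*\Gamma K)$ using the strong Whitehead condition, which identifies the homology of an abelian presheaf with the homology of the underlying presheaf. I would then use that $i_A i_A^*\to \id$ is in $\W_\infty$ for a test category, and compare $\Z$ of a simplicial abelian group with the group itself. This last comparison is the delicate point: $\Z\U G \to G$ is not a homology isomorphism. So instead of applying $\Z$, I would use the strong Whitehead condition in the form ``a map $F\to F'$ of abelian presheaves is in $\Wab_A$ iff $\U$ of it is in $\W_A$''. The counit is then checked via the conservativity of $\Hf{A}$ together with the fact that the composite $\Hf{A}\circ R$ agrees with the identity on simplicial objects. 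The latter comes from the case $A=\Delta$, where $\Hf{\Delta}\simeq\dk$ by Dold-Kan and $R=\Gamma$, combined with the Quillen equivalence $i_\Delta^*$ / $i_A^*$ between $\Hot_A$ and $\Hot_\Delta$ lifted to abelian group objects. For the unit $F\to RLF$, it suffices by conservativity of $\overline{\Hf{A}}$ (which holds by definition of $\Wab_A$ and the strong Whitehead condition) to apply $L$ and use the triangle identity together with the counit being an iso.

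I expect the main obstacle to be the counit step, that is, showing $\Hf{A}(R K)\simeq K$. This requires transferring the test-category equivalence $\Hot_A\simeq\Hot$ to abelian group objects. I would first try to show that $i_A^*$ and the functor $\Delta\to\Cat$ commute with finite products up to the relevant weak equivalences, so that they preserve abelian group objects and thus reduce everything to the simplicial Dold-Kan equivalence.
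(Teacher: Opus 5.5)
\textbf{There is a genuine gap: the counit step is never proved.} Your outer framework is fine. The concrete $R$, with $R(K)(a)=\Hom(\dk\Z\nerf(\tranche{A}{a}),K)$, is right adjoint to the normalized Bousfield--Kan integrator $L$, and $L$ computes $\Hf{A}$. Once the counit is an isomorphism, the unit follows as you say.

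Step two needs a different justification. $\U R(K)$ is not $i_A^*$ of anything: $i_A^*$ is defined on $\Cat$, and $i_\Delta$ does not make $\Gamma K$ an abelian group object there. Instead, write $L=\dk\circ\rho^{\ab}_!$, with $\rho^{\ab}_!$ the cocontinuous extension of $a\mapsto\Z\nerf(\tranche{A}{a})$ and $\rho^{\ab}$ its right adjoint, so that $R=\rho^{\ab}\Gamma$. Then $\U R(K)=\rho(\U\Gamma K)$, where $\rho$ is right adjoint to the cocontinuous functor $\nerf i_A:\pref{A}\to\pref{\Delta}$. So $R$ preserves weak equivalences because $(\nerf i_A,\rho)$ is a Quillen pair for the Grothendieck--Cisinski and Kan--Quillen structures and $\U\Gamma K$ is always Kan.

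The real gap is the counit $LRK\to K$, which is the whole content of the theorem. Saying that $\Hf{A}\circ R$ agrees with the identity only restates it. Conservativity of $\Hf{A}$ plays no role there, since the counit lives in $\Ch(\Ab)$. The strong Whitehead condition tells you which maps lie in $\Wab_A$, but gives no way to compute $\Hf{A}(F)$ from $\U F$. Moreover, $L$ does not commute with $\U$. For $f:\rho^{\ab}_!X\to G$, the set-level transpose of $\U(f^\sharp)$ is $\U f$ precomposed with the comparison $\nerf i_A(\U X)\to\U\rho^{\ab}_!(X)$. Knowing that this comparison is a weak equivalence is a Dold--Thom-type statement about $\pref{A}$, no easier than the theorem.

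Your proposed fix cannot work either. Functors preserving products only up to weak equivalence do not send strict abelian group objects to strict ones, and $i_\Delta$ does not preserve products. Abelian group objects in $\Cat$ are just two-term complexes, so any route through $\Cat$ sees only $1$-truncated information.

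\textbf{The missing idea} is to compare abelian presheaf categories only through functors for which \emph{both} adjoints commute with $\U$. The paper does this with restriction functors. For an aspherical $u:A\to B$ between strong Whitehead local test categories, $(u^*)^\ab$ and $u_*^\ab$ lift $u^*$ and $u_*$.
\begin{itemize}
\item The paper transfers the Grothendieck--Cisinski structure to $\prefab{A}$ along $\Whf{A}$ by Crans' theorem, using $\Wab_A=\U^{-1}\W_A$ to check acyclicity.
\item The strong Whitehead condition on both sides then reduces the Quillen-equivalence criterion for $((u^*)^\ab,u_*^\ab)$ to Cisinski's set-level one for $(u^*,u_*)$.
\item $\Wab_\infty$-asphericity gives $\Hf{A}\circ(u^*)^\ab\simeq\Hf{B}$, so homology never has to be computed on non-free objects.
\item $A$ is joined to $\Delta$ by the zigzag $A\to B\leftarrow\Delta$, with $B\subset\Cat$ the full subcategory on the $\tranche{A}{a}$ and the $\tranche{\Delta}{\Delta_n}$. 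Both functors are aspherical, and $B$ is strong Whitehead and local test.
\item Dold--Kan for $\Delta$ finishes the argument.
\end{itemize}
Your Quillen pair with $\Ch(\Ab)$ is then a consequence of the theorem, as in the paper's final remark, rather than a route to it.
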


A major ingredient of the proof comes from functors satisfying the condition of
Quillen's Theorem~A \cite[Theorem A]{quillenktheory}, that are called
\emph{aspherical functors} in the context of Grothendieck's homotopy theory.
Given any aspherical functor $u : A \to B$, the induced restriction functor $u^*
: \pref{B}\to\pref{A}$ is homotopy cofinal, meaning that it induces an
isomorphism between homotopy colimits.

We show that if $u : A \to B$ is an aspherical functor and $A$ satisfies the
strong Whitehead condition, then the same holds for $B$. We also study the class
$\Wab_\infty$ of functors between small categories which induce isomorphisms in
homology, and the corresponding notion of $\Wab_\infty$-aspherical morphisms.
We show that the restriction functor $(u^*)^\ab : \prefab{B}\to\prefab{A}$
induced by a $\Wab_\infty$-aspherical functor commutes with the homology
functor. Moreover, using the Grothendieck-Cisinski model structure on the
category of presheaves over a local test category
\cite[Corollary~4.2.18]{cisinskipref}, we prove the following theorem:

\begin{theorem*}[]
Let $A$ be a local test category satisfying the strong Whitehead condition.
\begin{itemize}
\item[\normalfont{(\ref{thm:modelStructureAbPresheaves})}]
The category $\prefab{A}$ can be given the structure of a cofibrantly generated
model category whose weak equivalences are the elements of the class $\Wab_A =
\U^{-1}\W_A$. 
\item[\normalfont{(\ref{prop:foncteurAspheriqueEqQuillenAb})}]
If $u : A \to B$ is an aspherical functor and $B$ is also a local test category,
then the functor~$(u^*)^\ab : \prefab{B} \to \prefab{A}$ is a left Quillen
equivalence.
\end{itemize}
\end{theorem*}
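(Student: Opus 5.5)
Both items follow from a right transfer of the Grothendieck--Cisinski model structure along the free--forgetful adjunction $\Z : \pref{A} \rightleftarrows \prefab{A} : \U$, followed by a Quillen-pair argument with $u^*$.

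\textbf{Transferred model structure.} By \cite[Corollary~4.2.18]{cisinskipref}, the Grothendieck--Cisinski model structure on $\pref{A}$ exists for the local test category $A$. Its weak equivalences are $\W_A$ and its cofibrations are the monomorphisms. It is cofibrantly generated by the boundary inclusions $\partial a \to a$ (a set $I$) and by a set $J$ of generating trivial cofibrations. I will define fibrations and weak equivalences in $\prefab{A}$ as the maps whose image under $\U$ is a fibration, respectively lies in $\W_A$; by the strong Whitehead condition the latter class is exactly $\Wab_A$. The generating sets are $\Z I$ and $\Z J$. Because $\prefab{A}$ is locally presentable and $\U$ preserves filtered colimits, the small object argument applies. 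By the standard transfer criterion (Kan, Hirschhorn 11.3.2), it then suffices to check the acyclicity condition: every relative $\Z J$-cell complex lies in $\U^{-1}\W_A$.

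\textbf{Acyclicity, the main obstacle.} I would handle this through homology rather than through $\U$ directly, which is exactly what the strong Whitehead condition makes possible. The plan is to show that $\Hf{A}$ sends $\Z j$ to an isomorphism for $j\in J$, and that $\Wab_A$ is stable under pushouts of monomorphisms and transfinite compositions. The first claim follows from the compatibility $\Hf{A}\circ\Z \simeq \dk\nerf i_A$ together with $j\in\W_A$. For the second, $\Hf{A}$ is a left derived colimit, so it turns pushouts along degreewise monomorphisms into homotopy pushouts, hence long exact Mayer--Vietoris sequences, and it turns filtered colimits into colimits of homology. Note that $\Z$ preserves monomorphisms. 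A relative $\Z J$-cell complex therefore lies in $\Wab_A = \U^{-1}\W_A$. The delicate points are that homotopy-colimit exactness and ensuring $\Hf{A}$ really computes homotopy pushouts for monomorphisms of abelian presheaves; if needed, I would instead use a cofibrant replacement of the diagram by projective-type objects.

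\textbf{Quillen equivalence for aspherical $u$.} The functor $(u^*)^\ab$ has a right adjoint $u_*^\ab$. I would first check that $(u^*)^\ab$ preserves generating cofibrations and trivial cofibrations. It commutes with $\Z$, and $u^*$ preserves monomorphisms, so monomorphisms (the cofibrations in the transferred structure, which are retracts of $\Z I$-cells and hence monomorphisms) are sent to monomorphisms. For weak equivalences, an aspherical $u$ makes $u^*$ preserve and reflect $\W$, and $\U$ commutes with $u^*$; with the strong Whitehead condition on both $A$ and $B$ (the latter inherited from $A$, as stated earlier), $(u^*)^\ab$ then preserves and reflects weak equivalences, hence trivial cofibrations, so it is left Quillen. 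To get a Quillen equivalence I would use the earlier result that $(u^*)^\ab$ commutes with $\Hf{}$ (an aspherical functor is $\Wab_\infty$-aspherical), so that $\Hf{A}\circ \mathbf{L}(u^*)^\ab \simeq \Hf{B}$. Assuming $\overline{\Hf{A}}$ and $\overline{\Hf{B}}$ are equivalences by Theorem~\ref{thm:WhiteheadTestHomPseudoTest}, the derived functor is then an equivalence. Without that theorem, I would instead check directly that the derived unit and counit are weak equivalences, using the reflection of weak equivalences together with the Quillen equivalence $u^*$ on the non-abelian side \cite{cisinskipref}.
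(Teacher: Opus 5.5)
Your part (1) is correct, but it follows a different route from the paper. The paper does not argue with closure properties of $\Wab_A$ itself. It fixes a free integrator $L$ and uses the strong Whitehead condition to write $\U^{-1}\W_A=(L^\ab_!)^{-1}\W_{\mathrm{qis}}$. It then notes that the left adjoint $L^\ab_!$ sends $\Z J$ to trivial cofibrations of $\Ch(\Ab)$ (Proposition~\ref{prop:intLibreMonos}, Corollary~\ref{homologielibrecofibtriviale}). Hence $L^\ab_!(l(r(\Z J)))\subset l(r(L^\ab_!\Z J))$ consists of quasi-isomorphisms. You stay inside $\prefab{A}$, and the points you flag as delicate are harmless:
\begin{itemize}
\item $H_n(A,-)=L_n\limind$ is a homological $\delta$-functor, and in an abelian category a pushout of a monomorphism is a monomorphism with the same cokernel. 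Two long exact sequences therefore give closure of $\Wab_A$ under pushouts along monomorphisms.
\item $H_n(A,-)$ commutes with filtered colimits, because it is computed by the exact, cocontinuous functor $(\ell_A)^\ab_!$ of paragraph~\ref{paragr:BousfieldKanIntegrator}.
\end{itemize}
The paper's argument is shorter and sets up the comparison with $\Ch(\Ab)$; yours gives a stronger closure statement. A minor correction: for a general local test category the generating cofibrations are not the boundary inclusions $\partial a\to a$, but any generating set of monomorphisms works.

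Part (2) has genuine gaps. Your main argument for the Quillen equivalence is circular. In the paper, Theorem~\ref{thm:WhiteheadTestHomPseudoTest} is deduced from Corollary~\ref{coro:fonctAspheriqueTestLocalWhiteheadPsTestHomEq}, whose proof uses exactly the Quillen equivalence you are trying to prove. That theorem also needs test categories, while here $A$ and $B$ are only local test, and then $\overline{\Hf{A}}$ need not be an equivalence. For example, $A=\Delta\sqcup\Delta$ is a strong Whitehead local test category with $\Hotab_A\simeq\Hotab\times\Hotab$, and $\overline{\Hf{A}}$ is the direct sum functor. The argument that works is your fallback, which is the paper's proof, and it should be the main line. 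Take $X$ in $\prefab{B}$ and $Y$ fibrant in $\prefab{A}$. Then $\U Y$ is fibrant by definition of the transferred fibrations, and $\U X$ is cofibrant since every presheaf is. Because $\U$ commutes with both $u^*$ and $u_*$ (diagram~\ref{diag:invImageCommutationsOubli}), it sends the transpose $f^\sharp$ of $f:(u^*)^\ab X\to Y$ to the transpose of $\U f$. The strong Whitehead condition for $A$ and for $B$, together with Proposition~\ref{StructureCisinskiMorphAspheriquesEqQuillen}, then gives $f\in\Wab_A\iff f^\sharp\in\Wab_B$.

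Your left Quillen check also fails as written. You show that cofibrations go to monomorphisms, but the cofibrations of the transferred structure are only some of the monomorphisms. For $A=\Delta$ they correspond under Dold--Kan to monomorphisms with projective cokernel, so $0\to\Z/2$ (constant) is not a cofibration. Use instead the commutation you mention. For a generating cofibration $\Z i$, diagram~\ref{diag:invImageCommutationWhitehead} gives $(u^*)^\ab(\Z i)\simeq\Z(u^*i)$. Since $u^*i$ is a monomorphism, it is a cofibration of $\pref{A}$, and $\Whf{A}$ is left Quillen for the transferred structure, so $\Z(u^*i)$ is a cofibration. As $(u^*)^\ab$ is a left adjoint, it then preserves all cofibrations.
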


In particular, the equivalence of localized categories of Theorem
\ref{thm:WhiteheadTestHomPseudoTest} can be promoted to a Quillen equivalence
with the projective model structure on non-negative chain complexes of abelian
groups.

As an application of Theorem \ref{thm:WhiteheadTestHomPseudoTest}, we prove that
Joyal's category $\Theta$ satisfies the homotopical Dold-Kan correspondence. 
Our contribution to this proof consists mainly in the study of a variation on
the notion of aspherical functors for functors whose codomain is a presheaf
category. We prove that if $A$ satisfies the strong Whitehead condition, and if
$j: B \to \pref{A}$ is an aspherical functor in this sense, then $B$ also
satisfies the strong Whitehead condition. We then use previous work from Ara and
Maltsiniotis \cite{ara2022compnerfs} showing that the functor $\Delta \to
\pref{\Theta}$ given by the cellular nerve (\cite{batanin2000multitude},
\cite{berger2002cellular}) and Street's oriental functor
\cite{street1987algebra} is an aspherical functor. This then implies that
$\Theta$ satisfies the strong Whitehead condition, and therefore satisfies the
conditions of Theorem \ref{thm:WhiteheadTestHomPseudoTest}. 

We would like to point out that one can also try to characterize small
categories $A$ for which the functor $\Hf{A}$ induces an equivalence of
categories between the localization of the pair $(\prefab{A},\Wab_A)$ and
$\Hotab$, without requiring the condition of question $(2)$. This is partly done
in the author's PhD thesis \cite{hubert2025phd} from which this article is
extracted, with still more investigations to be held.
For example, the category $\tranche{\Delta}{\nerf(\Grefl)}$, where $\Grefl$ is
the reflexive globular category, is a test category that doesn't satisfy the
strong Whitehead condition. However, one can prove using
\cite[Proposition~4.3.6]{hubert2025phd} that it satisfies this version of the
homotopical Dold-Kan correspondence.

\subsection*{Structure of the paper} 
Section \ref{sec:testCategories} consists of a brief recollection of definitions
and results concerning test categories that we will use throughout this paper.
In Section \ref{sec:homology}, we introduce the homology functor and show that,
when restricted to presheaves of sets, it computes the homology of the category
of elements. In Section \ref{sec:integrators}, we expose Grothendieck's
definition of the tensor product of functors and use it to produce explicit
computations of the homology functor. In Section \ref{sec:asphericalMorphisms},
we introduce the class $\Wab_\infty$ of functors inducing isomorphisms in
homology and we prove that the restriction functor induced by a
$\Wab_\infty$-aspherical morphism commutes with homology. Section
\ref{sec:whitehead} introduces the strong Whitehead condition, and contains the
two main theorems stated in the introduction. In Section \ref{sec:theta}, we
focus on Joyal's category $\Theta$ and show, using Ara and Maltsiniotis's
results \cite{ara2022compnerfs}, that $\Theta_n$, for any integer $n\geq1$, and
$\Theta$ are strong Whitehead categories, and thus verify the homotopical
Dold-Kan correspondence.

\subsection*{Acknowledgment}
I am grateful to Georges Maltsiniotis, whose project to edit \emph{Pursuing
Stacks} made the present work possible, and to Dimitri Ara for his guidance
during my PhD, from which this article is extracted.

All definitions in Sections \ref{sec:testCategories} to \ref{sec:integrators},
except for the term \emph{homologically pseudo-test}, are due to Grothendieck
\cite{pursuingstacks}. 

\subsection*{Notation}
If $\mathcal{C}$ is a category, we denote by $\Ob(\mathcal{C})$ its class of
objects and by $\Arr(\mathcal{C})$ its class of morphisms. Given another
category $\mathcal{D}$, we denote by $\Homi(\mathcal{C},\mathcal{D})$ the
category of functors $\mathcal{C}\to\mathcal{D}$ and natural transformations
between them. We denote by $\Set$ the category of sets, $\Ab$ the category of
abelian groups and $\Cat$ the category of small categories. 

If $A$ is a small category, $\pref{A}$ is the category $\Homi({A}^{\op}, \Set)$
of set-valued presheaves on $A$. Given an object $a$ of $A$, we denote by $a$
the associated representable presheaf on $A$. Given a functor~$u: A \to B$
between small categories, we denote by~$u^* : \pref{B} \to \pref{A}$ the
restriction functor, and by $u_!$ (resp.~$u_*$) its left (resp.~right) adjoint.

We denote by $e$ the final object of $\Cat$, and by $e_{\pref{A}}$ the final
object of $\pref{A}$ for any small category~$A$.

\section{Preliminaries on test categories}
\label{sec:testCategories}

We begin by a brief recollection of definitions and facts concerning the theory
of test categories that will be used throughout this paper. We refer to
\cite{maltsiniotis2005} for a detailed exposition of this theory.

\begin{paragr}
If $u: A \to B$ is a functor and $b$ is an object of $B$, we denote by
$\tranche{A}{b}$ the category whose objects are pairs~$(a,f)$ where $a$ is an
object of $A$ and $f: ua \to b$ is a morphism in $B$, and whose morphisms from
an object $(a,f)$ to an object $(a',f')$ are morphisms $g: a \to a'$ such that
$f'u(g)=f$. In particular, if $A$ is a small category, $u: A \to \pref{A}$ is
the Yoneda embedding and $X$ is a presheaf on $A$, the category
$\tranche{A}{X}$ is the category of elements of $X$. This defines a
functor 
\begin{align*}
i_A: \pref{A} \to \Cat \mdvirg A \mapsto \tranche{A}{X}
\end{align*}
which admits as a right adjoint the functor 
\[
i_A^*: \Cat \to \pref{A} \mdvirg C \mapsto \Hom(\tranche{A}{a}, C) \pbox{.}
\]
\end{paragr}

\begin{paragr}
A class $\W \subset \Arr(\Cat)$ is \emph{weakly saturated} if it
satisfies the following conditions: 
\begin{enumerate}
\item identities are in $\W$;
\item if two out of three arrows in a commutative triangle in $\Cat$ are in
$\W$, then so is the third;
\item if $i: X' \to X$ and $r: X \to X'$ are two functors between small
categories such that $ri=\id_{X'}$ and~$ir$ is in $\W$, then $r$ is also in
$\W$;
\end{enumerate}
\end{paragr}
\begin{paragr}
A class $\W \subset \Arr(\Cat)$ is a \emph{basic localizer} if it is weakly
saturated and satisfies the following conditions (recall that $e$ is the final
object of $\Cat$): 
\begin{enumerate}[resume]
\item if $A$ is a small category with a final object, then the
morphism $A\to e$ is in $\W$;
\item if $u: A \to B$ is a functor in $\Cat$ such that for every
object $b$ of $B$, the functor 
\[
\tranche{u}{b}: \tranche{A}{b} \to \tranche{B}{b} \mdvirg (a,u(a)\to b)
\mapsto (u(a),u(a)\to b) 
\]
is in $\W$, then $u$ is in $\W$. 
\end{enumerate}
\end{paragr}

\begin{paragr}
\label{par:defAspherique} 
Let $\W$ be a basic localizer in $\Cat$. We say that a small category
is \emph{$\W$-aspherical} if the unique morphism $A \to e$ is in $\W$. 
We say that a functor $u: A \to B$ is $\W$-aspherical if for every
object $b$ of $B$, the morphism $\tranche{u}{b}$ is in $\W$. Finally, we say
that a presheaf $X$ on $A$ is $\W$-aspherical if $i_A(X)$ is
a~$\W$-aspherical category. One can prove that any representable presheaf is $
\W$-aspherical, since for any object $a$ of~$A$, the
category~$\tranche{A}{a}$ has a final object.
\end{paragr}

\begin{prop}[Grothendieck]\label{prop:MorphismesAspheriquesLocFond}
Let $u:A\to B$ be a functor between small categories. The following
conditions are equivalent:
\begin{enumerate}
\item u is $\W$-aspherical;
\item a presheaf $X$ on $B$ is $\W$-aspherical if and only if $u^*(X)$ is
$\W$-aspherical;
\item for every object $b$ of $B$, the presheaf $u^*(b)$ is $\W$-aspherical;
\item for every presheaf $X$ on $B$, the functor
\[
  \tranche{u}{X}: \tranche{A}{X}\to \tranche{B}{X} \mdvirg (a, u(a) \rightarrow X)
  \mapsto (u(a), u(a) \rightarrow X)
  \]
  is in $\W$;
\end{enumerate}

Moreover, these conditions imply the following condition:
\begin{enumerate}[resume]
\item for any morphism $f: X \to Y$ in $\pref{B}$, the functor
$i_B(f)$ is in $\W$ if and only if $i_A(u^*(f))$ is in $\W$.
\end{enumerate}
\end{prop}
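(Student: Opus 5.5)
The plan is to reduce everything to one geometric observation: for any presheaf $X$ on $B$ there is a canonical isomorphism of categories $\tranche{A}{u^*X} \cong \tranche{A}{X}$. Here the left side is the category of elements of $u^*X$, and the right side is the comma category of $u$ over $X$, whose objects are pairs $(a, u(a)\to X)$. The isomorphism is the Yoneda lemma, $\Hom(a,u^*X)=X(u(a))=\Hom(u(a),X)$, and it is natural in $X$. Under it, the functor $\tranche{u}{X}$ becomes a functor $i_A(u^*X)\to i_B(X)$, natural in $X$. Moreover, for any $Y$ over $X$, the categories $\tranche{A}{Y}$ and $\tranche{B}{Y}$ can be computed as comma categories over $\tranche{B}{X}$. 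In particular, for $b$ in $B$ we get $\tranche{A}{b}\cong i_A(u^*b)$, and $\tranche{B}{b}$ has a final object.

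I will prove the implications (1)$\Rightarrow$(4)$\Rightarrow$(2)$\Rightarrow$(3)$\Rightarrow$(1), and then (4)$\Rightarrow$(5).

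\emph{(3)$\Leftrightarrow$(1).} By the observation above, $\tranche{u}{b}$ is identified with $i_A(u^*b)\to\tranche{B}{b}$. The target has a final object, so it is $\W$-aspherical by axiom (4) of a basic localizer. Two-out-of-three then shows that $\tranche{u}{b}\in\W$ if and only if $i_A(u^*b)\to e$ is in $\W$.

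\emph{(1)$\Rightarrow$(4).} This is the main step, and I would apply axiom (5) of a basic localizer to $\tranche{u}{X}$ over $\tranche{B}{X}$. An object of $\tranche{B}{X}$ is a pair $(b,x)$ with $x\in X(b)$. I need to check that the comma category $(\tranche{A}{X})/(b,x)$ is isomorphic to $\tranche{A}{b}$, and that $(\tranche{B}{X})/(b,x)$ is isomorphic to $\tranche{B}{b}$. Both hold because a morphism over $(b,x)$ determines the structure map to $X$ as a composite through $x$. Under these identifications the induced comparison functor is exactly $\tranche{u}{b}$, which lies in $\W$ by (1). Axiom (5) then gives $\tranche{u}{X}\in\W$. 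The part requiring care is verifying that these slice identifications are compatible with the functors involved; this bookkeeping is where I expect the main obstacle.

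\emph{(4)$\Rightarrow$(2).} We have $\tranche{u}{X}\colon i_A(u^*X)\to i_B(X)$ in $\W$. By two-out-of-three with the maps to $e$, $i_A(u^*X)$ is aspherical if and only if $i_B(X)$ is.

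\emph{(2)$\Rightarrow$(3).} Representables are aspherical, as noted in \ref{par:defAspherique}, so (2) applied to $X=b$ gives that $u^*(b)$ is aspherical.

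\emph{(4)$\Rightarrow$(5).} A morphism $f\colon X\to Y$ gives a commutative square with horizontal arrows $\tranche{u}{X}$ and $\tranche{u}{Y}$, both in $\W$, and vertical arrows $i_A(u^*f)$ and $i_B(f)$; commutativity follows from naturality of the identification above. Applying two-out-of-three twice shows that $i_A(u^*f)\in\W$ if and only if $i_B(f)\in\W$.
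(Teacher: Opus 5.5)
Your proof is correct. The paper gives no argument of its own and simply cites \cite[Proposition 1.2.9]{maltsiniotis2005}, and your route is essentially the standard proof given there: identify the slice of $\tranche{u}{X}$ over $(b,x)$ with $\tranche{u}{b}$ (that is, base change along the discrete fibration $\tranche{B}{X}\to B$), apply axiom (5), and obtain the remaining implications by two-out-of-three against categories with a final object. One routine point goes unstated: you transport membership in $\W$ along your identifications, which uses that isomorphisms of categories lie in $\W$; this follows from the retract axiom.
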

\begin{proof}
See \cite[Proposition 1.2.9]{maltsiniotis2005}.
\end{proof}

\begin{paragr}
\label{def:eqThomason} 
We denote by $\W_\infty$ the class of morphisms in $\Cat$ sent to
simplicial weak equivalences by the simplicial nerve functor. Using
\cite[Theorem A]{quillenktheory}, it is easy to prove that $\W_\infty$ is a basic
localizer. We call \emph{Thomason equivalences}, or simply \emph{weak
equivalences of categories}, its elements. 
\end{paragr}
\begin{paragr}
We denote by $\Hot$ the localization of $\Cat$ by the class $\W_\infty$. Recall
(\cite[VI, Theorem 3.3]{illusie1971cotangent}) that the simplicial nerve functor
induces an equivalence of categories between $\Hot$ and the homotopy category of
simplicial sets. 
\end{paragr}

In the rest of this paper, we will simply use the adjective \emph{aspherical}
to refer to $\W_\infty$-asphericity.

\begin{paragr}
Let $A$ be a small category. A morphism $f: X \to Y$ of $\pref{A}$ is a \emph{weak
equivalence} if its image by the functor $i_A$ is in $\W_\infty$. We
denote by $\W_A$ the class of weak equivalences of presheaves on $A$,
and by $\Hot_A$ the localization of $\pref{A}$ by the class $\W_A$.

The functor $i_A$ induces a functor $\Hot_A \to \Hot$, and we say that
$A$ is a \emph{pseudo-test category} if this functor is an equivalence of
categories. 
\end{paragr}

\begin{paragr}
We say that $A$ is a \emph{weak test category} if the following
conditions are satisfied: 
\begin{enumerate}
\item $i_A^*(\W_\infty) \subset \W_A$;
\item for every presheaf $X$, the unit morphism $\eta_X: X \to
i_A^*i_A(X)$ is in $\W_A$;
\item for every small category $X$, the counit morphism $\epsilon_C:
i_Ai_A^*(C)\to C$ is in $\W_\infty$.
\end{enumerate}
In particular, every weak test category is a pseudo-test category.
\end{paragr}

 \begin{paragr}
A small category $A$ is a \emph{local test category} if for any object
$a$ of $A$, the category $\tranche{A}{a}$ is a weak test category. If
$A$ is both a weak test category and a local test category, we say
that $A$ is a \emph{test category}.
 \end{paragr}

\begin{theorem}[Grothendieck-Cisinski]\label{thm:modelStructureCisinski}
Let $A$ be a local test category. The category $\pref{A}$ can be given
the structure of a model category where the weak equivalences are the elements
of $\W_A$ and the cofibrations are the monomorphisms.
\end{theorem}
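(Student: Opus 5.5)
This is Cisinski's theorem, and I would obtain it from Cisinski's general machinery for model structures on presheaf categories whose cofibrations are the monomorphisms. The plan is to build a candidate model structure from a good cylinder and a localizing class of anodyne extensions, and then identify its weak equivalences with $\W_A$.

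First, the cofibrations. In $\pref{A}$, the monomorphisms are generated as a saturated class by the boundary inclusions $\partial a\to a$ of representables. This uses a cellular model, obtained by filtering any monomorphism by the "dimension" of non-degenerate elements; in general one uses a set of generating monos given by subobjects of representables, which is a set since $A$ is small. Hence the trivial fibrations (maps with the right lifting property against monos) are determined by a small object argument.

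Second, the cylinder and anodyne maps. Since $A$ is a local test category, there is a separating, locally aspherical interval. Concretely, the subobject classifier $L$ of $\pref{A}$ is an injective interval, with the two points $\{0\},\{1\}\to L$ disjoint, and local test implies $L\times a$ is aspherical over $a$, i.e.\ $L$ is locally aspherical. I would take the cylinder $X\mapsto L\times X$ and generate the class of $L$-anodyne extensions from the smallest class containing the maps
\[
  (\partial a\times L)\cup(a\times\{\epsilon\}) \to a\times L ,\qquad \epsilon\in\{0,1\},
\]
closed under the usual pushout-product with $L$. Cisinski's theorem then yields a cofibrantly generated model structure with monos as cofibrations, whose weak equivalences are the smallest class containing trivial fibrations and anodyne maps and satisfying 2-out-of-3 (a minimal Cisinski model structure, possibly enlarged by a set $S$ of maps to be chosen).

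Third, and this is the main obstacle, I must show that these weak equivalences are exactly $\W_A$. The plan has three parts.

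\begin{enumerate}
\item Every trivial fibration lies in $\W_A$. Its fibers over representables are aspherical (they are contractible and injective), so Proposition~\ref{prop:MorphismesAspheriquesLocFond} together with the basic localizer axiom on $\W_\infty$ gives that $i_A$ of a trivial fibration is in $\W_\infty$.
\item The projection $L\times X\to X$ lies in $\W_A$. This is exactly local asphericity of $L$, which comes from the local test hypothesis.
\item Conversely, $\W_A$ is an "$A$-localizer": it is weakly saturated, contains trivial fibrations, and is closed under monos with good colimit behaviour, by the basic localizer properties transported through $i_A$. Cisinski's classification says any such class that also contains the maps $X\times L\to X$ is generated by a set $S$ together with the cylinder.
\end{enumerate}

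From the third part, I would choose $S$ to be a set of generators for $\W_A$, which exists by accessibility of $\W_\infty$ pulled back along $i_A$. Adding $S$ to the anodyne generators then gives weak equivalences equal to $\W_A$. The delicate point is verifying that $\W_A\cap\{\text{monos}\}$ is stable under pushouts and transfinite composition. This follows because $i_A$ preserves colimits up to weak equivalence and takes monos to Grothendieck-fibration-like cosieves, where $\W_\infty$ is known to be stable.

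Alternatively, one may simply cite \cite[Corollary~4.2.18]{cisinskipref}, which is exactly this argument.
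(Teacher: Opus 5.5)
Your closing sentence is exactly the paper's proof, which consists only of the citation \cite[Corollary~4.2.18]{cisinskipref}, and your outline has the right overall shape for what that reference does. As a self-contained sketch, however, its first step fails as stated, and the error carries over to your anodyne generators. For a general local test category there is no boundary $\partial a$. Your fallback, the subobjects of representables, does not generate the monomorphisms either.

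Take $A=\Delta\times BG$ with $G$ a nontrivial group. Presheaves on $A$ are simplicial right $G$-sets. $A$ is a local test category: its subobject classifier is $L_\Delta$ with trivial action, and the category of elements of $L_A\times(\Delta_n,*)$ is $\Delta/(L_\Delta\times\Delta_n)$ times a chaotic groupoid, hence aspherical. The subobjects of the representable $\Delta_n\times G$ are the $K\times G$ with $K\subset\Delta_n$. So a map has the right lifting property against them exactly when its underlying simplicial map is a trivial Kan fibration. Let $EG$ be the nerve of the translation groupoid of $G$. Then $EG\to e_{\pref{A}}$ qualifies, yet $\emptyset\to e_{\pref{A}}$ does not lift against it, because $EG$ has no $G$-fixed vertex.

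With your generating set, the cofibrations would be only the $G$-free monomorphisms. The correct cellular model for an arbitrary small $A$ is the set of inclusions $K\subset Q$ with $Q$ a \emph{quotient} of a representable. To see this, filter a monomorphism $X\to Y$ by adjoining the elements $y\in Y(a)$ one at a time. What gets glued in at each stage is the image of $a\to Y$, not $a$ itself.

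The substantive input of Cisinski's theorem is that $\W_A$ is an \emph{accessible} $A$-localizer, and you only assert this. Containing the projections $L\times X\to X$ has nothing to do with being generated by a set. Since $L$ is injective, these projections are trivial fibrations, so every $A$-localizer contains them. Accessibility does hold roughly for the reason you hint at. $\W_\infty$ is the preimage under the nerve of the weak equivalences of a combinatorial model category, hence accessible and accessibly embedded in $\Arr(\Cat)$. $\W_A$ is then its preimage under the filtered-colimit-preserving functor induced by $\nerf i_A$. Still, this is the step that needs an argument or a precise reference.

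Two smaller corrections. First, in your part (1), passing from $L$-contractible fibres to aspherical fibres uses part (2), i.e.\ local asphericity of $L$. This is exactly where the local test hypothesis enters: for $A=BG$, the injective presheaf $L_{BG}$ has category of elements $BG\sqcup BG$. Second, for the stability of monomorphisms in $\W_A$, note that $i_A$ is a left adjoint. It therefore preserves colimits on the nose and sends monomorphisms to sieves, not cosieves. The fact to invoke is Thomason's theorem, Proposition~\ref{i_Ahocolim}. Pushouts along monomorphisms and transfinite composites of monomorphisms are computed pointwise along injections, so they are homotopy colimits, and $\nerf i_A$ computes homotopy colimits.

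Once you have these three facts, Smith's recognition theorem applied to the correct cellular model already yields the model structure. The cylinder is then needed only inside the proof of your part (1).
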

\begin{proof}
See \cite[Corollary 4.2.18]{cisinskipref}.
\end{proof}

\begin{prop}[Cisinski]\label{StructureCisinskiMorphAspheriquesEqQuillen}
Let $u:A \to B$ be an aspherical functor between local test
categories. Then $(u^*,u_*)$ is a Quillen equivalence for the
Grothendieck-Cisinski model structure on $\pref{A}$ and $\pref{B}$.
\end{prop}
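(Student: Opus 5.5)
I would prove this by first checking that $(u^*,u_*)$ is a Quillen pair and then reducing the equivalence to a statement about homotopy categories, using Proposition \ref{prop:MorphismesAspheriquesLocFond}.

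\textbf{Quillen pair.} The functor $u^*$ has the right adjoint $u_*$. It preserves monomorphisms, since limits in presheaf categories are computed objectwise and $u^*$ commutes with them. By condition (5) of Proposition \ref{prop:MorphismesAspheriquesLocFond}, applied to $\W_\infty$, it also preserves weak equivalences: $f\in\W_B$ if and only if $u^*(f)\in\W_A$. By Theorem \ref{thm:modelStructureCisinski}, the cofibrations on both sides are the monomorphisms. Hence $u^*$ preserves cofibrations and trivial cofibrations, so $(u^*,u_*)$ is a Quillen pair.

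\textbf{Reduction.} Since $u^*$ preserves all weak equivalences, its left derived functor is just the functor $\overline{u^*}:\Hot_B\to\Hot_A$ induced by $u^*$. So it suffices to show that $\overline{u^*}$ is an equivalence of categories.

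\textbf{Equivalence.} Condition (4) of Proposition \ref{prop:MorphismesAspheriquesLocFond} gives, for every presheaf $X$ on $B$, that $\tranche{u}{X}:\tranche{A}{u^*X}\to\tranche{B}{X}$ lies in $\W_\infty$. Here I use the identification $\tranche{A}{X}=i_A(u^*X)$. This yields a natural transformation $i_Au^*\Rightarrow i_B$ which is objectwise a weak equivalence. Therefore the square
\[
\overline{i_A}\circ\overline{u^*}\cong\overline{i_B}
\]
commutes up to natural isomorphism in $\Hot$.

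\textbf{Main obstacle.} If $A$ and $B$ are pseudo-test categories, so that $\overline{i_A}$ and $\overline{i_B}$ are equivalences, two-out-of-three for equivalences of categories immediately finishes the proof. But local test categories need not be pseudo-test; for example, $\tranche{A}{X}$ may fail to be weakly contractible. So the general case needs a localized version of this argument.

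\textbf{Localizing over $B$.} For a local test category, $\Hot_A\to\Hot$ need not be an equivalence, but each slice $\tranche{A}{a}$ is a weak test category. I would work over $B$ rather than over the point.
\begin{itemize}
\item For $b\in B$, the category $\tranche{A}{u^*b}$ is aspherical, since $u$ is aspherical.
\item Using $\pref{B}/b\simeq\pref{\tranche{B}{b}}$ and $\pref{A}/u^*b\simeq\pref{\tranche{A}{u^*b}}$, the functor $u$ induces $\tranche{A}{u^*b}\to\tranche{B}{b}$, which is again aspherical.
\item By Theorem \ref{thm:modelStructureCisinski}, its source is a local test category. Being aspherical, it is then a test category (a local test category that is aspherical is a test category).
\item The target $\tranche{B}{b}$ is a weak test category by hypothesis, so the pseudo-test argument above applies to each slice.
\end{itemize}

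\textbf{Gluing and fallback.} To pass from the slices back to the whole category, I would use the derived unit and counit. The derived counit $u^*Ru_*Y\to Y$, for $Y$ fibrant, can be tested after restriction to slices, because weak equivalences are detected locally via $i_A$ and colimit decompositions over elements.

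Alternatively, and I think more cleanly, I would use Cisinski's characterization for regular test categories: it suffices to show that $u^*$ reflects weak equivalences, which is (5), and that the derived counit is an isomorphism. The latter can be checked using that $u_!$, restricted along the Yoneda embedding, sends representables $a$ to $u(a)$, which are aspherical. Also $u_!u^*X\to X$ is a weak equivalence for every $X$, since $u^*$ of it is a weak equivalence by a cofinality argument. I expect this gluing step to be the main difficulty.
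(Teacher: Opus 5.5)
Most of your proposal is correct, but it stops before the step that carries the content of the proposition, so there is a genuine gap.

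\textbf{What is correct.} Your Quillen-pair argument is fine. So is the reduction to showing that $\overline{u^*}\colon\Hot_B\to\Hot_A$ is an equivalence, and the two-out-of-three argument when $A$ and $B$ are pseudo-test. The slice statement also holds. One correction there: $\tranche{A}{b}$ is local test simply because $\tranche{(\tranche{A}{b})}{(a,\varphi)}\simeq\tranche{A}{a}$; Theorem~\ref{thm:modelStructureCisinski} is the wrong reference.

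\textbf{The gap.} It is the step you flag yourself: nothing in the proposal passes from the slices back to $\pref{A}$. For local test categories that are not pseudo-test, this is the whole content of the proposition (the paper itself only cites Cisinski here). Your sentence about testing the derived counit ``after restriction to slices'' hides two non-formal claims:
\begin{enumerate}
\item restricting $\epsilon_Y\colon u^*u_*Y\to Y$ to the slice over $u^*b$ gives the derived counit of the slice adjunction;
\item a morphism of $\pref{A}$ that is a weak equivalence on every such slice is a weak equivalence.
\end{enumerate}
Ordinary colimits of presheaves are not homotopy colimits, so ``colimit decompositions over elements'' proves neither claim.

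\textbf{The fallback does not help.} The map $u_!u^*X\to X$ is the counit of $(u_!,u^*)$, not of $(u^*,u_*)$, so it says nothing directly about the derived counit you need. The strategy suggested by ``$u_!$ sends $a$ to $u(a)$'' is to show that the unit $Y\to u^*u_!Y$ is a weak equivalence, and that is false in general. Take the aspherical projection $p\colon\Delta\times\Delta\to\Delta$; then $(p_!Y)_k=\pi_0(Y_{k,\bullet})$. If $Y$ is constant in the first variable with value a simplicial circle, $p^*p_!Y$ is the point while $Y$ has the homotopy type of a circle.

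\textbf{How to close the gap along your own route.} You need two lemmas.
\begin{enumerate}
\item $(\tranche{u}{b})_*$ sends $Y\times u^*b\to u^*b$ to $u_*Y\times b\to b$, and the slice counit there is $\epsilon_Y\times u^*b$. For $Y$ fibrant, $Y\times u^*b\to u^*b$ is fibrant in the Grothendieck--Cisinski structure on $\pref{\tranche{A}{b}}$. That structure is the slice structure, since it has the same cofibrations and weak equivalences. The slice pair is a Quillen equivalence by your pseudo-test argument, so $\epsilon_Y\times u^*b\in\W_A$ for every $b$.
\item A morphism $g\colon Z\to Z'$ of $\pref{A}$ with $g\times u^*b\in\W_A$ for all $b$ lies in $\W_A$. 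To see this, use the Grothendieck construction $\int_{b\in B}\tranche{A}{(Z\times u^*b)}$, whose objects are tuples $(b,a,z,\phi\colon u(a)\to b)$. Its projection $\pi$ to $\tranche{A}{Z}$ is natural in $Z$. For each object $(a,z)$, the comma category $(a,z)\backslash\pi$ has an initial object given by $b=u(a)$ and $\phi=\id_{u(a)}$. Hence $\pi\in\W_\infty$ by the dual of Theorem A. Homotopy invariance of the Grothendieck construction and two-out-of-three then give $i_A(g)\in\W_\infty$.
\end{enumerate}
With these, the derived counit is a weak equivalence for every fibrant $Y$. Since $u^*$ reflects weak equivalences and every object is cofibrant, $(u^*,u_*)$ is a Quillen equivalence.
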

\begin{proof}
See \cite[Proposition 6.4.29]{cisinskipref}.
\end{proof}

\begin{paragr}
Let $A$ be a small category. A presheaf $X$ on $A$ is \emph{locally
$\W$-aspherical} if the canonical functor~$i_A(X)\to A$ is
$\W$-aspherical. We say that $A$ is \emph{totally aspherical} if it is
aspherical, and if for any presheaves $X$ and $Y$ on $A$, the
canonical morphism $i_A(X\times Y) \to i_A(X)\times i_A(Y)$ is
aspherical. 
\end{paragr}

\begin{prop}[Grothendieck]\label{prop:totalementAspheriqueEquivalences}
Let $A$ be a small category. The following conditions are equivalent: 
\begin{enumerate}
\item $A$ is totally aspherical;
\item the diagonal functor $A \to A \times A$ is aspherical;
\item every aspherical presheaf on $A$ is locally aspherical.
\end{enumerate}
\end{prop}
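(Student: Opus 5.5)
Let $\delta : A \to A\times A$ be the diagonal and $p : \tranche{A}{X}\to A$ the canonical functor. I will prove the cycle (1)$\Rightarrow$(3)$\Rightarrow$(2)$\Rightarrow$(1). The main tools are Proposition \ref{prop:MorphismesAspheriquesLocFond} and the basic localizer axioms for $\W_\infty$.

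The two key identifications concern slices. For a pair $(a,b)$ of objects of $A$, the slice $\tranche{A}{(a,b)}$ of $\delta$ is the category of elements $i_A(a\times b)$ of the product of representables. On the other side, $\tranche{(A\times A)}{(a,b)}\simeq \tranche{A}{a}\times\tranche{A}{b}$ has a final object. By condition (4) of a basic localizer, $\delta$ is aspherical if and only if $a\times b$ is an aspherical presheaf for all $a,b$. Similarly, for a presheaf $X$ and an object $a$, the slice of $p$ at $a$ is the category of elements of $X\times a$. So $X$ is locally aspherical if and only if $X\times a$ is aspherical for every $a$.

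\textbf{(1)$\Rightarrow$(3).} Let $X$ be aspherical. For each $a$, the functor $i_A(X\times a)\to i_A(X)\times\tranche{A}{a}$ is aspherical by hypothesis, hence in $\W_\infty$. The target is aspherical, since $\W_\infty$ is stable under products with categories having a final object; this can also be derived from the localizer axioms. By 2-out-of-3, $X\times a$ is aspherical, so $X$ is locally aspherical.

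\textbf{(3)$\Rightarrow$(2).} Each representable $b$ is aspherical, hence locally aspherical, so $a\times b$ is aspherical for all $a,b$. By the slice identification above, $\delta$ is aspherical.

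\textbf{(2)$\Rightarrow$(1).} This is the step I expect to be the main obstacle.

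First, asphericity of $A$ itself. The presheaf $e_{\pref{A}}\times e_{\pref{A}}=e_{\pref{A}}$, and $\delta^*$ of the final presheaf is final. Using (2)$\Leftrightarrow$(1) of Proposition \ref{prop:MorphismesAspheriquesLocFond}, $e_{\pref{A}}$ is aspherical on $A$ if and only if the final presheaf on $A\times A$ is aspherical, i.e. iff $A\times A$ is aspherical. This alone does not force asphericity of $A$. I therefore expect the paper's convention to include "$A$ aspherical" in (2), or to use a retraction argument: $A$ is a retract of $A\times A$ via $\delta$ and a projection, and weak saturation condition (3) applies. I would check which is intended; the retract argument gives $A$ aspherical once $A\times A$ is.

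Second, the comparison map. For presheaves $X,Y$ on $A$, let $X\boxtimes Y$ be the external product on $A\times A$, so that $i_{A\times A}(X\boxtimes Y)=i_A(X)\times i_A(Y)$ and $\delta^*(X\boxtimes Y)=X\times Y$. The canonical morphism $i_A(X\times Y)\to i_A(X)\times i_A(Y)$ is exactly the functor $\tranche{\delta}{(X\boxtimes Y)}$. Condition (4) of Proposition \ref{prop:MorphismesAspheriquesLocFond} then gives that it lies in $\W_\infty$.

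To upgrade "in $\W_\infty$" to "aspherical", I would note that the slices of this functor over an object $((a,x),(b,y))$ are $\tranche{A}{(a,b)}$ for $\delta$, i.e. $i_A(a\times b)$, which is aspherical by the first paragraph. Hence the comparison functor is aspherical, completing the cycle.
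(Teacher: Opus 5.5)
\textbf{Gap in (2)$\Rightarrow$(1).} Your slice identifications and the implications (1)$\Rightarrow$(3)$\Rightarrow$(2) are correct. So is your last step, that $i_A(X\times Y)\to i_A(X)\times i_A(Y)$ is aspherical because its slice over $((a,x),(b,y))$ is $i_A(a\times b)$. The paper itself only cites \cite[Proposition 1.6.1]{maltsiniotis2005}. The genuine gap is the asphericity of $A$ in (2)$\Rightarrow$(1), which you leave unproved. The retraction argument you suggest is circular: the nerve commutes with products, so $A\times A$ is aspherical if and only if $A$ is, and you have no independent proof of either.

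In fact the statement as printed is false for $A=\emptyset$. The empty category satisfies (2) and (3) vacuously: $\delta$ has no slices, and its only presheaf has empty category of elements, hence is not aspherical. But its nerve is empty, so $\emptyset$ is not aspherical and therefore not totally aspherical. Your sense that something is missing is justified. What is missing is the case $A=\emptyset$, which needs a nonemptiness (or asphericity) hypothesis, not a convention about (2).

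\textbf{Fix for nonempty $A$.} The step follows from tools you already set up. Choose an object $a$. By (2), $a\times b$ is aspherical for every object $b$. By your own slice identification, this says the canonical functor $\tranche{A}{a}\to A$, whose slice over $b$ is $i_A(a\times b)$, is aspherical. Hence it lies in $\W_\infty$ by axiom (5) of a basic localizer. Since $\tranche{A}{a}$ has a final object, two-out-of-three shows that $A$ is aspherical. Equivalently, (2) says representables are locally aspherical, and any aspherical, locally aspherical presheaf forces $A$ to be aspherical. With this step inserted, your cycle proves the proposition for every nonempty $A$.
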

\begin{proof}
See \cite[Proposition 1.6.1]{maltsiniotis2005}.
\end{proof}

\begin{prop}[Grothendieck]\label{totAspheriqueMorphismeAspherique}
If $A$ is a totally aspherical category and $u: A \to B$ is an aspherical
functor, then $B$ is also totally aspherical.
\end{prop}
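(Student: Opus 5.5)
We would show that $B$ is aspherical and that every aspherical presheaf on $B$ is locally aspherical, using criterion (3) of Proposition~\ref{prop:totalementAspheriqueEquivalences}; the transfer from $A$ to $B$ is done with Proposition~\ref{prop:MorphismesAspheriquesLocFond}.

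\emph{$B$ is aspherical.} The final presheaf $e_{\pref{B}}$ satisfies $u^*(e_{\pref{B}}) = e_{\pref{A}}$, and $i_A(e_{\pref{A}})\cong A$ is aspherical because $A$ is totally aspherical. By condition (2) of Proposition~\ref{prop:MorphismesAspheriquesLocFond}, $e_{\pref{B}}$ is aspherical, that is, $B\cong i_B(e_{\pref{B}})$ is aspherical.

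\emph{Aspherical presheaves on $B$ are locally aspherical.} Let $X$ be an aspherical presheaf on $B$. We must show that $p:\tranche{B}{X}\to B$ is aspherical. By criterion (3) of Proposition~\ref{prop:MorphismesAspheriquesLocFond} applied to $p$, it suffices that for each object $b$ of $B$ the presheaf $p^*(b)$ on $\tranche{B}{X}$ is aspherical. Its category of elements is $\tranche{B}{(X\times b)}$, so we need $X\times b$ to be aspherical in $\pref{B}$. Since $u^*$ commutes with products, $u^*(X\times b)=u^*X\times u^*b$. Here $u^*X$ is aspherical (by (2), since $X$ is), and $u^*b$ is aspherical (by (3), since $u$ is aspherical). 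On the totally aspherical category $A$, a product of two aspherical presheaves is aspherical: by the definition of total asphericity, $i_A(Y\times Z)\to i_A(Y)\times i_A(Z)$ is aspherical, hence in $\W_\infty$, and a product of aspherical categories is aspherical. Hence $u^*(X\times b)$ is aspherical, and by (2) again $X\times b$ is aspherical, as required.

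The step to check most carefully is the identification $i_{\tranche{B}{X}}(p^*b)\cong\tranche{B}{(X\times b)}$, which is standard via $\pref{\tranche{B}{X}}\simeq\tranche{\pref{B}}{X}$. We also use that a product of aspherical small categories is aspherical. The latter holds because the nerve preserves products and a product of weakly contractible simplicial sets is weakly contractible. With these two facts in place, the argument is purely formal from Propositions~\ref{prop:MorphismesAspheriquesLocFond} and~\ref{prop:totalementAspheriqueEquivalences}.
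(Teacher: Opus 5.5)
Your proof is correct, and it is the standard argument behind the result the paper only cites here (\cite[Proposition~1.6.5]{maltsiniotis2005}): $u^*$ preserves products and, by Proposition~\ref{prop:MorphismesAspheriquesLocFond}, both preserves and reflects asphericity of presheaves, so the asphericity of products of aspherical presheaves on $A$ transfers to $B$. You apply this to $X\times b$ to verify criterion (3) of Proposition~\ref{prop:totalementAspheriqueEquivalences}; applying it to $b\times b'$ and using the diagonal criterion (2) would be marginally shorter, and your separate check that $B$ is aspherical is a sensible precaution, since criterion (3) on its own is vacuously satisfied by the empty category.
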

\begin{proof}
See \cite[Proposition 1.6.5]{maltsiniotis2005}.
\end{proof}

\begin{paragr}
A \emph{strict test category} is a test category which is also totally
aspherical.
\end{paragr}

\begin{prop}[Grothendieck]
Let $A$ be a totally aspherical category. The following conditions are
equivalent: 
\begin{enumerate}
\item $A$ is a weak test category;
\item $A$ is a local test category;
\item $A$ is a test category;
\item $A$ is a strict test category.
\end{enumerate}
\end{prop}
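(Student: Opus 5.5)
The implications $(4)\Rightarrow(3)$ and $(3)\Rightarrow(1)$ hold by definition: a strict test category is a test category, and a test category is by definition a weak test category. Likewise $(3)\Rightarrow(2)$ is immediate. So it remains to prove $(1)\Rightarrow(3)$, $(2)\Rightarrow(3)$ and $(3)\Rightarrow(4)$. The last is trivial, since $A$ is assumed totally aspherical, so a test category $A$ is strict. The real content is therefore that, for totally aspherical $A$, being a weak test category and being a local test category are each equivalent to being a test category.

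I would use Grothendieck's characterizations, taken from \cite{maltsiniotis2005} as in the rest of this section. An aspherical category is a weak test category if and only if $i_A^*$ sends categories with a final object (equivalently, the category $[1]$) to aspherical presheaves. The category $A$ is a local test category if and only if the subobject classifier $L_A$ (equivalently, the presheaf of sieves) is locally aspherical.

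For $(2)\Rightarrow(3)$: a totally aspherical category is in particular aspherical. A local test category that is aspherical is a test category, as recalled in the introduction: ``any local test category with trivial homotopy groups is also a weak test category''. This implication follows directly.

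For $(1)\Rightarrow(2)$: assume $A$ is a weak test category. I would show that for each object $a$, the slice $\tranche{A}{a}$ is a weak test category. The plan is to use Proposition \ref{prop:totalementAspheriqueEquivalences}, which says every aspherical presheaf on $A$ is locally aspherical. Since $A$ is weak test, $i_A^*(C)$ is aspherical for any $C$ with a final object. By total asphericity it is then locally aspherical, so for every $a$, the product $a\times i_A^*(C)$ is aspherical. Under the equivalence $\pref{\tranche{A}{a}}\simeq\tranche{\pref{A}}{a}$, we have $i_{\tranche{A}{a}}^*(C)$ corresponding to $a\times i_A^*(C)\to a$, whose category of elements is $i_A(a\times i_A^*(C))$. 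This should be aspherical, and $\tranche{A}{a}$ is aspherical since it has a final object, so the criterion gives that $\tranche{A}{a}$ is a weak test category.

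The main obstacle is verifying this identification of $i_{\tranche{A}{a}}^*$ with $a\times i_A^*(-)$ carefully, and invoking the correct form of the criterion for weak test categories. Alternatively, one can go through $L_A$: the presheaf $i_A^*([1])$ is a model of the sieve object, so its local asphericity is exactly the local test condition.
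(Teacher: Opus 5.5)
Your proof is correct. The paper gives no argument of its own and simply cites \cite[Proposition 1.6.6]{maltsiniotis2005}; your chain is a sound reconstruction of that result. It runs as follows: weak test gives $i_A^*(C)$ aspherical for every $C$ with a final object; total asphericity makes it locally aspherical; since $i_{\tranche{A}{a}}^*(C)$ corresponds to $a\times i_A^*(C)\to a$, each slice satisfies Grothendieck's weak test criterion; aspherical local test categories are test categories; the remaining implications are definitional. The only thing to drop is the parenthetical ``equivalently, the category $[1]$'' in your weak test criterion. You neither justify it nor need it: your alternative route through the sieve presheaf only uses the forward direction for $C=[1]$, which follows immediately from $i_A^*(\W_\infty)\subset\W_A$.
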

\begin{proof}
See \cite[Proposition 1.6.6]{maltsiniotis2005}.
\end{proof}

\begin{prop}[Grothendieck]\label{propFonctAspheriqueSourceTotAsphTest}
Let $u: A \to B$ be a functor. If $A$ is totally aspherical and if $B$
is a local test category, then $A$ and $B$ are both strict test
categories.
\end{prop}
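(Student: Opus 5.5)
As written, the statement cannot hold for an arbitrary functor: for $A=e$ and $B$ any local test category that is not weak test, $A$ is totally aspherical but $B$ is not a strict test category. I will therefore prove it, as in Grothendieck's original statement \cite{maltsiniotis2005}, under the hypothesis that $u$ is \emph{aspherical}. The plan is to treat $B$ first, using only the results recalled above, and then to transport a witness of the test property from $B$ back to $A$ along $u^*$.

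\emph{The category $B$.} Since $A$ is totally aspherical and $u$ is aspherical, Proposition~\ref{totAspheriqueMorphismeAspherique} shows that $B$ is totally aspherical. By hypothesis $B$ is a local test category. For totally aspherical categories the four notions (weak test, local test, test, strict test) coincide. Hence $B$ is a strict test category.

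\emph{The category $A$.} By the same equivalence, since $A$ is totally aspherical it suffices to show that $A$ is a local test category. I will use Grothendieck's characterization of local test categories by intervals \cite{maltsiniotis2005}. A small category $C$ is a local test category if and only if $\pref{C}$ admits a \emph{separating}, \emph{locally aspherical} interval. Here an interval is $(I,\partial_0,\partial_1)$ with $\partial_0,\partial_1\colon e_{\pref{C}}\to I$. It is separating when the equalizer (the intersection) of $\partial_0$ and $\partial_1$ is the empty presheaf. It is locally aspherical when $I\to e_{\pref{C}}$ is aspherical in the sense of Proposition~\ref{prop:MorphismesAspheriquesLocFond}. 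For totally aspherical $C$, Proposition~\ref{prop:totalementAspheriqueEquivalences} says every aspherical presheaf is locally aspherical. So for such $C$ we only need a separating interval that is aspherical.

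Since $B$ is a local test category, the characterization gives a separating, locally aspherical interval $I$ in $\pref{B}$ (for instance the subobject classifier $L_B$ with its two global sections). Consider $u^*(I)$ with the endpoints $u^*(\partial_0),u^*(\partial_1)$. The functor $u^*$ is computed pointwise, so it preserves all limits and colimits, in particular the final object $e_{\pref{A}}=u^*(e_{\pref{B}})$, equalizers, and the empty presheaf. Hence $u^*(I)$ is again a separating interval. The presheaf $I$ is aspherical, and $u$ is aspherical, so condition~(2) of Proposition~\ref{prop:MorphismesAspheriquesLocFond} shows that $u^*(I)$ is aspherical. Because $A$ is totally aspherical, $u^*(I)$ is then locally aspherical. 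By the interval criterion, $A$ is a local test category, hence a strict test category.

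\emph{Expected obstacle.} The main point to check carefully is the exact form of Grothendieck's interval criterion I am invoking. I need it in the form ``a separating locally aspherical interval implies local test'' (\cite[§1.5]{maltsiniotis2005}), and I also need the converse to extract such an interval from $B$. If the converse is inconvenient, I would use $L_B$ directly. Its local asphericity for a local test category is precisely Grothendieck's characterization of local test categories by the subobject classifier. It is separating because the global sections ``true'' and ``false'' have empty intersection. Everything else reduces to the formal exactness of $u^*$ together with Propositions~\ref{prop:MorphismesAspheriquesLocFond} and~\ref{prop:totalementAspheriqueEquivalences}.
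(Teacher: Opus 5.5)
Your proof is correct, and so is your correction of the statement. As printed it omits the hypothesis that $u$ is aspherical, which the proposition's label and the cited source \cite[Corollary 1.6.11]{maltsiniotis2005} presuppose. Without it the statement fails for any functor $e\to B$ with $B$ nonempty (for instance $B=\Delta$): $e$ is totally aspherical, but $\Hot_e\simeq\Set$, so $e$ is not even a pseudo-test category.

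The paper gives no argument of its own beyond citing Maltsiniotis, so your proof works as a self-contained reconstruction:
\begin{itemize}
\item For $B$, you combine Proposition~\ref{totAspheriqueMorphismeAspherique} with \cite[Proposition 1.6.6]{maltsiniotis2005}.
\item For $A$, you pull back a separating locally aspherical interval along $u^*$. This uses exactness of $u^*$, condition (2) of Proposition~\ref{prop:MorphismesAspheriquesLocFond}, and Proposition~\ref{prop:totalementAspheriqueEquivalences}.
\end{itemize}
The one external ingredient is Grothendieck's characterization of local test categories by separating locally aspherical intervals (see \cite{maltsiniotis2005} or \cite{cisinskipref}). You use it in both directions, or via local asphericity of $L_B$. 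It is not among the results recalled in the paper, so it should be cited precisely.

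One step needs an extra line. You assert that $I$ is aspherical, but local asphericity only says that $i_B(I)\to B$ is aspherical, hence in $\W_\infty$. Asphericity of $I$ then follows by two-out-of-three because $B$ is aspherical, which you have already established, since $B$ is totally aspherical.
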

\begin{proof}
See \cite[Corollary 1.6.11]{maltsiniotis2005}.
\end{proof}

\begin{paragr}
Let $A$ be a small category and $i: A \to \Cat$ a functor. Consider
the functor 
\[
i^*: \Cat \to \pref{A} \mdvirg C \mapsto \Hom_\Cat(i(a),C) \pbox{.}
\]
Observe that if $i$ is the inclusion functor from the simplex category
$\Delta$ to $\Cat$, then $i^*$ is the simplicial nerve functor. We say
that $i$ is an \emph{aspherical functor} if it satisfies the following
conditions: 
\begin{enumerate}
\item for every object $a$ of $A$, the category $i(a)$ is aspherical;
\item a small category $C$ is aspherical if and only if the presheaf
$i^*(C)$ is aspherical.
\end{enumerate}
\end{paragr}

\begin{prop}[Grothendieck]
\label{prop:testFaiblei_AAspherique}
Let $A$ be a small category. Then $A$ is a weak test category if and only if
$i_A$ is an aspherical functor.
\end{prop}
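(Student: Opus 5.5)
We must show that $A$ is a weak test category if and only if $i_A : A \to \Cat$, $a \mapsto \tranche{A}{a}$, is an aspherical functor. Here $i_A$ denotes the restriction of the category-of-elements functor to representables. The associated functor $i^* : C \mapsto \Hom(\tranche{A}{a},C)$ is exactly the right adjoint $i_A^*$. Condition (1) of asphericity always holds: each $\tranche{A}{a}$ has a final object, so it is aspherical by axiom (4) of a basic localizer. So the statement reduces to the following claim: $A$ is a weak test category if and only if, for every small category $C$, $C$ is aspherical exactly when $i_A^*(C)$ is aspherical, i.e.\ when $i_A i_A^*(C)$ is aspherical.

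\emph{Forward direction.} Assume $A$ is a weak test category. Then the counit $\epsilon_C : i_A i_A^*(C) \to C$ is in $\W_\infty$. Composing with $C \to e$ and using two out of three, $i_Ai_A^*(C)\to e$ is in $\W_\infty$ if and only if $C\to e$ is. This is condition (2).

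\emph{Converse: strategy.} Assume condition (2) holds. The key is to prove that the counit $\epsilon_C$ is in $\W_\infty$ for every $C$, using axiom (5) of basic localizers (Quillen's Theorem A). For $c\in C$, I would identify the slice $\tranche{(i_Ai_A^*C)}{c}$ of the functor $\epsilon_C$ with $i_A i_A^*(\tranche{C}{c})$. The objects of $\tranche{(i_Ai_A^*C)}{c}$ are triples $(a, x : \tranche{A}{a}\to C, \phi : x(a,\id_a) \to c)$; note that $\epsilon_C$ sends $(a,x)$ to $x(a,\id_a)$, since $(a,\id_a)$ is the final object of $\tranche{A}{a}$. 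A functor $\tranche{A}{a}\to \tranche{C}{c}$ is a pair $(x,\text{a cone } x \Rightarrow c)$. Because $\tranche{A}{a}$ has a final object, such cones correspond bijectively to maps $x(a,\id_a)\to c$, naturally in $a$. Hence $\tranche{(i_Ai_A^*C)}{c} \cong i_Ai_A^*(\tranche{C}{c})$. Since $\tranche{C}{c}$ has a final object, it is aspherical, so by (2) this slice is aspherical, and therefore $\epsilon_C\in\W_\infty$. This bijection of slices is the step I expect to need the most care, namely checking naturality and that morphisms correspond as well as objects.

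\emph{Converse: remaining conditions.} Given the counit result, condition (1) of weak test follows by two out of three: for $f\in\W_\infty$, the naturality square of $\epsilon$ shows $i_Ai_A^*(f)\in\W_\infty$, i.e.\ $i_A^*(f)\in\W_A$. For the unit, the triangle identity $\epsilon_{i_AX}\circ i_A(\eta_X)=\id_{i_AX}$ together with $\epsilon_{i_AX}\in\W_\infty$ gives $i_A(\eta_X)\in\W_\infty$, i.e.\ $\eta_X\in\W_A$. All three conditions hold, so $A$ is a weak test category.
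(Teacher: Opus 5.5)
Your proof is correct and complete. The identification $\tranche{(i_Ai_A^*C)}{c}\cong i_Ai_A^*(\tranche{C}{c})$ holds as you describe: a cone on a category with a final object is determined by its last component, and morphisms correspond as well. Axiom (5) then gives $\epsilon_C\in\W_\infty$, and the unit condition and $i_A^*(\W_\infty)\subset\W_A$ follow by two-out-of-three. The paper gives no argument of its own here, only the citation \cite[Remark 1.7.2]{maltsiniotis2005}. Your proof is essentially the standard argument behind that reference, namely Grothendieck's characterization of weak test categories through the counit and the asphericity of $i_A^*C$ for $C$ with a final object.
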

\begin{proof}
See \cite[Remark 1.7.2]{maltsiniotis2005}.
\end{proof}

\begin{prop}[Grothendieck]\label{lemmeFoncteursAspheriquesTriCommutatif}
Let $u: A \to B$ be a functor between small categories, and $j: B \to \Cat$ a
functor such that for every object $b$ of $B$, the category $j(b)$ is
aspherical.
\begin{enumerate}
\item if $u$ is aspherical, then $ju: A \to \Cat$ is aspherical if
and only if $j$ is aspherical;
\item if $j$ is fully faithful and $ju: A \to \Cat$ is an aspherical functor, then
$u$ and $j$ are aspherical.
\end{enumerate}
\end{prop}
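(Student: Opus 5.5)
The proof rests on the identity $(ju)^* = u^* j^*$ of functors $\Cat \to \pref{A}$. Indeed, for a small category $C$ and an object $a$ of $A$, both sides send $C$ to the set $\Hom_\Cat(j(u(a)),C)$, naturally in $a$ and $C$. Also, condition (1) in the definition of an aspherical functor $A \to \Cat$ holds automatically for $ju$, since $ju(a) = j(u(a))$ is aspherical by hypothesis. The same holds for $j$. So in both parts only condition (2) of that definition needs checking.

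For part (1), assume $u$ is aspherical. By Proposition \ref{prop:MorphismesAspheriquesLocFond} (equivalence of (1) and (2)), a presheaf $X$ on $B$ is aspherical if and only if $u^*(X)$ is aspherical. Apply this with $X = j^*(C)$ for an arbitrary small category $C$. Then $(ju)^*(C) = u^*(j^*(C))$ is aspherical if and only if $j^*(C)$ is aspherical. Hence the condition ``$C$ is aspherical iff $(ju)^*(C)$ is aspherical'' is equivalent to the condition ``$C$ is aspherical iff $j^*(C)$ is aspherical''. That is, $ju$ is aspherical if and only if $j$ is.

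For part (2), assume $j$ is fully faithful and $ju$ is aspherical. Fix an object $b$ of $B$. Full faithfulness gives natural bijections
\[
j^*(j(b))(b') = \Hom_\Cat(j(b'),j(b)) \cong \Hom_B(b',b),
\]
so $j^*(j(b))$ is isomorphic to the representable presheaf $b$. Therefore $u^*(b) \cong u^*j^*(j(b)) = (ju)^*(j(b))$. Since $j(b)$ is an aspherical category and $ju$ is an aspherical functor, $(ju)^*(j(b))$ is aspherical. Asphericity of presheaves is invariant under isomorphism, so $u^*(b)$ is aspherical for every $b$. By Proposition \ref{prop:MorphismesAspheriquesLocFond}, condition (3), $u$ is aspherical. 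Part (1) then applies, and since $ju$ is aspherical, so is $j$.

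The only point needing care is the identification $j^*(j(b)) \cong b$, which is just the Yoneda lemma combined with full faithfulness of $j$. Everything else is formal bookkeeping with Proposition \ref{prop:MorphismesAspheriquesLocFond}.
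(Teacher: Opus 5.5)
Your proof is correct: part (1) follows from the identity $(ju)^* = u^*j^*$ together with the equivalence of conditions (1) and (2) in Proposition~\ref{prop:MorphismesAspheriquesLocFond}, and part (2) follows from $j^*j(b)\simeq b$ (Yoneda plus full faithfulness), which makes $u^*(b)\simeq (ju)^*(j(b))$ aspherical, so $u$ is aspherical and (1) then gives $j$. The paper gives no argument of its own and only cites the literature, where this is, as far as I can tell, the standard proof.
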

\begin{proof}
See \cite[Lemma 1.7.4]{pursuingstacks}.
\end{proof}

\begin{paragr}
Let $A$ be a small category. We say that a functor $i: A \to \Cat$ is
a \emph{weak test functor} if $A$ is a weak test category and if $i$ is
aspherical. We say that $i$ is a \emph{local test functor} if for every
object $a$ of $A$, the functor $\tranche{A}{a}\to\Cat$ induced by $i$
is a weak test functor. Finally, $i$ is a \emph{test functor} if it is
both a weak test functor and a local test functor. 
\end{paragr}

\begin{theorem}[Grothendieck]
\label{thm:EquivalencesFoncteursTestLocal}
Let $A$ be a small category and $i:A \to \Cat$ a functor such that for
every object $a$ of $A$, the category $i(a)$ is aspherical. The
following conditions are equivalent:
\begin{enumerate}
\item $i$ is a local test functor (and $A$ is a local test category);
\item  for every small aspherical category $C$, the presheaf $i^*(C)$
is locally aspherical.
\end{enumerate}
\end{theorem}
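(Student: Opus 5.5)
The plan is to reduce both conditions to statements about the slices $\tranche{A}{a}$, and then compare presheaves with categories through one Grothendieck-construction zigzag. For an object $a$ of $A$, write $i_a : \tranche{A}{a} \to A \to \Cat$ for the composite of the forgetful functor with $i$. For any small category $C$ we have $i_a^*(C) = i^*(C)|_{\tranche{A}{a}}$. We also have an isomorphism
\[
\tranche{(\tranche{A}{i^*(C)})}{a} \cong i_{\tranche{A}{a}}\bigl(i_a^*(C)\bigr) \pbox{.}
\]
So condition (2) says exactly this: for every object $a$ and every aspherical $C$, the presheaf $i_a^*(C)$ on $\tranche{A}{a}$ is aspherical.

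\emph{(1) $\Rightarrow$ (2).} If $i$ is a local test functor, each $i_a$ is a weak test functor, and in particular an aspherical functor $\tranche{A}{a}\to\Cat$. By the definition of aspherical functor, $i_a^*(C)$ is therefore aspherical whenever $C$ is. By the reformulation above, this gives (2).

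\emph{(2) $\Rightarrow$ (1).} For a presheaf $X = i_a^*(C)$, I would form the category $\int X$ of triples $(x, f, t)$ with $x \in \tranche{A}{a}$, $f : i_a(x) \to C$ and $t \in \Ob(i_a(x))$. It has two projections:
\[
p : \textstyle\int X \to i_{\tranche{A}{a}}(X) \mdvirg q : \textstyle\int X \to C, \ (x,f,t)\mapsto f(t) \pbox{.}
\]
\begin{itemize}
\item The functor $p$ is a Grothendieck (op)fibration whose fibres are the categories $i(x)$, which are aspherical. Hence $p \in \W_\infty$ by Theorem~A.
\item For $q$, I would use condition (5) of basic localizers and compute the comma categories $\tranche{(\int X)}{c}$. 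Up to a further Theorem~A argument, these reduce to the categories of elements of $i_a^*(\tranche{C}{c})$. Since $\tranche{C}{c}$ has a final object it is aspherical, so (2) says $i_a^*(\tranche{C}{c})$ is aspherical. Hence $q \in \W_\infty$.
\end{itemize}
Thus $i_{\tranche{A}{a}}(i_a^*C)$ is weakly equivalent to $C$, naturally in $C$. This gives both implications ``$C$ aspherical $\iff i_a^*(C)$ aspherical''. Together with the hypothesis that each $i(x)$ is aspherical, it shows that each $i_a$ is an aspherical functor.

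It remains to show that each $\tranche{A}{a}$ is a weak test category. By Proposition~\ref{prop:testFaiblei_AAspherique} this is equivalent to $i_{\tranche{A}{a}}$ being aspherical. I would compare $i_a$ with $i_{\tranche{A}{a}}$ through Proposition~\ref{lemmeFoncteursAspheriquesTriCommutatif}. The same zigzag, applied on slices $\tranche{(\tranche{A}{a})}{x}$, gives the natural weak equivalence $i_{\tranche{A}{a}} i_a^* \Rightarrow \id$ up to zigzag. Feeding $C = i_{\tranche{A}{a}}(Y)$ into it then shows that $i_{\tranche{A}{a}}^*$ detects and preserves asphericity, with the unit being a weak equivalence. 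I expect this transfer of asphericity from $i_a$ to the canonical functor $i_{\tranche{A}{a}}$ to be the main obstacle. If needed, I would instead appeal to Grothendieck's result that a category admitting an aspherical functor to $\Cat$ with aspherical values, and with final object, is a weak test category as soon as $i^*$ of aspherical categories are locally aspherical.

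The other technical point is the identification of the comma categories of $q$, which requires care because $i(x)$ need not have a final object. This is why I would work with the full Grothendieck construction $\int X$ rather than a direct functor $i_{\tranche{A}{a}}(X) \to C$.
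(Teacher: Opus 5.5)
Your reformulation of (2) (namely that $i_a^*(C)$ is aspherical on $\tranche{A}{a}$ for every $a$ and every aspherical $C$) is correct, and so is your proof of (1)$\Rightarrow$(2). The gap is in (2)$\Rightarrow$(1), at the point you flag yourself. Nothing in your argument shows that $\tranche{A}{a}$ is a weak test category, and this is the actual content of the theorem.

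There is no natural transformation between $i_a(x)=i(x)$ and $i_{\tranche{A}{a}}(x)\cong\tranche{A}{x}$, so Proposition~\ref{lemmeFoncteursAspheriquesTriCommutatif} has nothing to act on. Feeding $C=i_{\tranche{A}{a}}(Y)$ into your zigzag only yields statements about $i_a^*$, never about $i_{\tranche{A}{a}}^*$. Routing through a full subcategory of $\Cat$ containing the $i(x)$ and the $\tranche{A}{x}$ would require knowing that $i_{\tranche{A}{a}}^*$ preserves aspherical categories, which is exactly what must be proved. Your fallback is circular: (2) for $i$ implies (2) for $i_a$, so the ``result of Grothendieck'' you would invoke is the theorem itself applied to $i_a$.

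The paper only cites \cite[Theorem 1.7.13]{maltsiniotis2005}. The missing ingredient is Grothendieck's segment criterion: a small category whose presheaf category admits a locally aspherical \emph{separating} segment is a local test category. Here $i^*(\Delta_1)$ is such a segment. Its endpoints are induced by the two maps $\Delta_0\to\Delta_1$, using $i^*(\Delta_0)=e_{\pref{A}}$. It is separating because every $i(a)$ is non-empty, and it is locally aspherical by (2) applied to $C=\Delta_1$. Hence $A$ is a local test category.

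Once $\tranche{A}{a}$ is known to be weak test, asphericity of $i_a$ follows without your zigzag. Let $D$ be the full subcategory of $\Cat$ on the categories $i(x)$ and $\tranche{(\tranche{A}{a})}{x}$, for $x$ in $\tranche{A}{a}$.
\begin{itemize}
\item The functor $i_{\tranche{A}{a}}$ is aspherical (Proposition~\ref{prop:testFaiblei_AAspherique}) and factors through the fully faithful inclusion $D\hookrightarrow\Cat$. Part (2) of Proposition~\ref{lemmeFoncteursAspheriquesTriCommutatif} therefore makes this inclusion aspherical.
\item The functor $\tranche{A}{a}\to D$, $x\mapsto i(x)$, has comma categories $\tranche{(\tranche{A}{a})}{i_a^*(d)}$. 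These are aspherical by (2), since every object $d$ of $D$ is an aspherical category.
\item Part (1) of the same proposition then gives that $i_a$ is aspherical.
\end{itemize}

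This route is preferable because your zigzag step is itself unjustified. An object of $\tranche{(\int X)}{c}$ carries a single arrow $f(t)\to c$, whereas an element of $i_a^*(\tranche{C}{c})$ is a whole cocone $f\Rightarrow c$. The two need not agree up to weak equivalence. Take $A=e$ and $i$ with value $\Delta_1$, with $C=\Delta_1$ and $c=0$. Then $\tranche{(\int X)}{0}$ has two contractible components, while $\Hom(\Delta_1,\tranche{C}{0})$ is a point. So any comparison must use (2) in an essential way, and you give no such argument. Even granted, it would only settle the easy half, the asphericity of $i_a$.
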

\begin{proof}
See \cite[Theorem 1.7.13]{maltsiniotis2005}.
\end{proof}

\section{Homology, homotopy colimits and the category of elements} 
\label{sec:homology}

In the context of test categories, the homology of a presheaf is the homology of
its category of elements. 
We will now introduce, following Grothendieck's ideas in~\cite{pursuingstacks},
a homology functor for \emph{abelian} presheaves that generalizes this idea. We
show that it also generalizes the homology of groups and of simplicial abelian
groups.

\begin{paragr}
If $A$ is a small category, we denote by $\prefab{A} =
\Homi({A}^{\op},\Ab)$ the category of abelian presheaves on $A$. Given a
presheaf $X$ on~$A$, we denote by $\Wh{A}{X}$ the abelian presheaf on $A$
sending every object $a$ of $A$ to the free abelian group on the set $Xa$.
Following Grothendieck's terminology \cite{pursuingstacks}, we will call the
\emph{Whitehead functor} the functor 
\[
\Whf{A}: \pref{A}\to\prefab{A} \mdvirg X \mapsto \Wh{A}{X} \mdvirg
\]
or simply the \emph{abelianization functor}.
If $a$ is an object of $A$, we will simply write $\Wh{A}{a}$ for the image of
the representable presheaf $a$ by the functor $\Whf{A}$, and we will refer to it
as the abelian presheaf represented by $a$.
\end{paragr}

\begin{paragr}\label{paragr:HomologyFunctor}
We denote by $\Hotab$ the derived category $\mathcal{D}_+(\Ab)$, obtained by
localizing non-negative chain complexes of abelian groups by quasi-isomorphisms.
We define a functor 
\[
\Hf{A}: \prefab{A} \to \Hotab \mdvirg X \mapsto \H{A}{X}
\]
where $\H{A}{X}$ is the homotopy colimit of the diagram 
\[
{A}^{\op} \xrightarrow{X} \Ab \xhookrightarrow{in_0} \Ch(\Ab) \pbox{,}
\]
$in_0$ being the inclusion of abelian groups into chain complexes
concentrated in degree $0$.
We say that $\H{A}{X}$ is the \emph{homology of $A$ with coefficients in $X$},
or simply the \emph{homology of~X}.
\end{paragr}

\begin{example}
If $A$ is the groupoid $BG$ associated to a group $G$, abelian presheaves on $A$
coincide with right $\Z G${\nobreakdash}-modules. If $M: {BG}^{\op}\to\Ab$ is
such a $\Z G${\nobreakdash}-module, its colimit as a diagram in $\Ab$ is the
abelian group $M_G$ of coinvariants, and $\H{BG}{M}$ is the complex, up to
quasi-isomorphism, computing the homology of the group $G$ with coefficients in
$M$. 
\end{example}

\begin{example}\label{ex:normalizedComplexHomology}
If $A$ is the simplex category $\Delta$, then the diagram 
\[
\xymatrix{
\prefab{\Delta} \ar[rd]_{\Hf{\Delta}} \ar[r]^-{\dk} & \Ch(\Ab)
\ar[d]^{} \\
& \Hotab
} 
\]
is commutative up to a natural isomorphism, where $\dk$ is the normalized chain
complex functor 
\[
\dk: X \mapsto 
\left(
X_0 \xleftarrow{d_0} \cdots \xleftarrow{d_0}
\bigcap_{0<i\leq n}\ker{d_i}
\xleftarrow{d_0}
\bigcap_{0<i\leq n+1}\ker{d_i}
\leftarrow \cdots
\right)
\pbox{.}
\]
Thus if $X$ is a simplicial abelian group, $\H{\Delta}{X}$ is the complex, up to
quasi-isomorphism, computing the standard homology of $X$. This result already
appears in \cite[XII, 5.5]{bousfield1972homotopy}, but we will prove it again
in Example \ref{ex:IntegratorDelta}. 
Note that we could also use the non-normalized chain complex functor, since both
are pointwise quasi-isomorphic.
\end{example}

We will now prove that the homology functor we just introduced associates to any
presheaf of sets the homology of its category of elements. 

\begin{paragr}
Recall that the category $\pref{\Delta}$ can be equipped with the
Kan\nobreakdash-Quillen model structure \cite[II, Section 3, Theorem
3]{quillenhomotalg}, where weak equivalences are the simplicial weak
equivalences and cofibrations are the monomorphisms.
Similarly, the category~$\Ch(\Ab)$ can be equipped with the projective model
structure \cite[XII,4.12]{quillenhomotalg} where the weak equivalences are the
quasi-isomorphisms, and the cofibrations are the monomorphisms with projective
cokernels. Moreover, the functor 
\[
\pref{\Delta}\xrightarrow{\Whf{\Delta}} \prefab{\Delta} \xrightarrow{\dk} \Ch(\Ab) 
\]
is a left Quillen functor with respect to these model structures, and it
preserves weak equivalences.
\end{paragr}

\begin{prop}[Thomason]\label{i_Ahocolim}
Let $X$ be a presheaf on a small category $A$. Denote by $\widetilde{X}$ the
diagram of discrete simplicial sets 
\[
\widetilde{X}: {A}^{\op} \to \Set \hookrightarrow \Cat
\xrightarrow{\nerf} \EnsSimp 
\]
where $\nerf$ is the simplicial nerve functor. Then there is a natural
isomorphism 
\[
\nerf i_A(X) \simeq \hocolim_{{A}^{\op}}^{\Hot_\Delta} \tilde{X}
\]
in $\Hot_\Delta$.
\end{prop}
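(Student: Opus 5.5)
This is Thomason's homotopy colimit theorem specialized to a diagram of discrete categories, and the plan is to compute both sides explicitly and compare them. The homotopy colimit of $\widetilde{X}$ in $\EnsSimp$ will be modelled by the Bousfield--Kan construction: the diagonal of the bisimplicial set
\[
([n],[m]) \mapsto \coprod_{a_0 \to \cdots \to a_n \text{ in } A} \widetilde{X}(a_0)_m ,
\]
with the chains $a_0\to\cdots\to a_n$ read in ${A}^{\op}$, i.e.\ as chains of arrows $a_n\to\cdots\to a_0$ in $A$, matching the variance of $X$. Since every $\widetilde{X}(a)=\nerf(Xa)$ is a discrete simplicial set, constant in the $m$-direction, this bisimplicial set is constant in $m$. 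Its diagonal is therefore the simplicial set $B$ whose $n$-simplices are pairs consisting of a chain $a_n\to\cdots\to a_0$ in $A$ and an element $x\in X(a_0)$.

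The next step is to compare $B$ with $\nerf i_A(X)$. An $n$-simplex of $\nerf i_A(X)$ is a chain of $n$ composable arrows in $\tranche{A}{X}$, i.e.\ a chain $a_0\to\cdots\to a_n$ in $A$ together with an element $x_n\in X(a_n)$; all the other labels $x_i$ are forced as restrictions of $x_n$. So both simplicial sets are parametrized by a chain plus an element at one extremity. Depending on the convention for $\hocolim$ (which end of the chain carries the element), this is either a literal isomorphism $B\simeq\nerf i_A(X)$ or an isomorphism $B\simeq\nerf\bigl(i_A(X)^{\op}\bigr)$. In the second case I would compose with the canonical natural isomorphism $\nerf(C^{\op})\simeq \nerf(C)^{\op}$ and the natural weak equivalence between $K^{\op}$ and $K$, which holds naturally in $K$ in $\Hot_\Delta$, for instance through $\mathrm{Ex}$ or barycentric subdivision.

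Everything so far is natural in $X$ (and in morphisms of presheaves), which gives the natural isomorphism in $\Hot_\Delta$. One loose end is the choice of model for $\hocolim$: I will justify that the Bousfield--Kan formula computes the derived colimit for the Kan--Quillen structure. This holds because discrete simplicial sets are cofibrant, so the diagram is pointwise cofibrant and the Bousfield--Kan formula $\nerf(\tranche{A^{\op}}{-})\otimes_{A^{\op}}\widetilde{X}$ is a model for the homotopy colimit.

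I expect the main obstacle to be the variance bookkeeping rather than anything conceptual: whether one obtains $\nerf i_A(X)$ or its opposite, and making the resulting comparison natural in $\Hot_\Delta$. A cleaner way to sidestep it is to use the other Bousfield--Kan model, the coend over ${A}^{\op}$ with $\nerf(\tranche{a}{A^{\op}})^{\op}$ or $\nerf(\tranche{A}{a})$ depending on orientation. Choosing the weights $\nerf(\tranche{A}{a})$, which are contractible with final object, makes the coend $\int^{a}\nerf(\tranche{A}{a})\times Xa$ literally equal to $\nerf i_A(X)$. Here the $n$-simplices are chains $a_0\to\cdots\to a_n$ together with $x\in X(a_n)$ (the element sits at the final vertex, where each chain maps to $a=a_n$). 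The weights form a cofibrant replacement of the terminal diagram in the projective structure on $\Homi(A,\EnsSimp)$. Their nerves are free, so this diagram is projectively cofibrant, and being objectwise contractible it is weakly equivalent to the constant point. I would adopt this second route as the main argument.
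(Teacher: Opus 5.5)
Your proof is correct, but it takes a different route from the paper. The paper gives no argument of its own. It simply invokes Thomason's theorem: for any diagram $F\colon I\to\Cat$, the homotopy colimit of $\nerf\circ F$ is naturally weakly equivalent to the nerve of the Grothendieck construction of $F$. The paper applies this with $I={A}^{\op}$ and $F$ the discrete diagram $X$.

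Your main (second) route proves this discrete case from scratch.
\begin{itemize}
\item Levelwise, $\nerf(\tranche{A}{a})_n=\coprod_{a_0\to\cdots\to a_n}\Hom_A(a_n,a)$. Co-Yoneda then gives $\int^{a}\nerf(\tranche{A}{a})\times Xa\simeq\nerf(\tranche{A}{X})$, compatibly with faces and degeneracies and naturally in $X$.
\item The weight $a\mapsto\nerf(\tranche{A}{a})$ is projectively cofibrant, since its nondegenerate simplices form free $A$-sets. It is also objectwise contractible, so it is a cofibrant replacement of the terminal weight.
\end{itemize}
The one step you leave implicit is why such a weight computes the derived colimit. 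This is the standard fact that $-\otimes_{{A}^{\op}}-$ is a Quillen bifunctor for the projective structure on weights and the injective structure on diagrams. In the injective structure every diagram of simplicial sets is cofibrant, so $W\otimes_{{A}^{\op}}F$ is homotopy invariant in both variables once $W$ is projectively cofibrant.

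As for what each approach buys: the citation is a one-liner and rests on a theorem valid for arbitrary $\Cat$-valued diagrams, where only a weak equivalence is available. Your argument is self-contained and uses only standard facts about the Bousfield--Kan construction. It also produces a point-set isomorphism with a Bousfield--Kan model, with no opposite-category bookkeeping, because the category of elements is a discrete fibration over $A$.

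There are two harmless slips in your preliminary discussion.
\begin{itemize}
\item The weight in $\nerf(\tranche{{A}^{\op}}{-})\otimes_{{A}^{\op}}\widetilde{X}$ has the wrong variance, since both factors are covariant on ${A}^{\op}$. The correct weight uses the under-categories of ${A}^{\op}$, namely $(\tranche{A}{a})^{\op}$; after the usual $\op$ this is exactly your $\nerf(\tranche{A}{a})$.
\item With the simplicial replacement you wrote (element over $a_0$, the source of the chain in ${A}^{\op}$), you get specifically $\nerf\bigl((\tranche{A}{X})^{\op}\bigr)$, not ``either/or''. For that comparison, barycentric subdivision works: $\mathrm{sd}(K^{\op})\simeq\mathrm{sd}(K)$ naturally, and the last-vertex maps are weak equivalences.
\end{itemize}
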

\begin{proof}
This is \cite[Theorem 1.2]{thomason1979homotopy}.
\end{proof}

\begin{prop}\label{prop:colimiteHomotopiqueLibres}
For any small category $A$, the following diagram is commutative up to
a natural isomorphism:
\begin{equation}\label{diag:colimitehomotopiquelibres}
\xymatrix@C=3em{
\pref{A} \ar[r]^{i_A} \ar[d]_{\Whf{A}} & \Cat \ar[r]^{\nerf} 
& \Hot_{\Delta} \ar[d]^-{{\dk\Whf{\Delta}}} \\
\prefab{A} \ar[rr]_{\Hf{A}} && \Hotab & \pbox{.}
} 
\end{equation}
\end{prop}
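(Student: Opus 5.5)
We combine Thomason's theorem (Proposition~\ref{i_Ahocolim}) with the fact that the composite $\dk\Whf{\Delta}$ is a left Quillen functor that preserves weak equivalences, so it commutes with homotopy colimits. The argument has three steps.

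\textbf{Step 1: reduce to a homotopy colimit of discrete simplicial sets.} Let $X$ be a presheaf on $A$, and let $\tilde X : A^{\op}\to\EnsSimp$ be the diagram of discrete simplicial sets from Proposition~\ref{i_Ahocolim}. By that proposition there is a natural isomorphism in $\Hot_\Delta$
\[
\nerf i_A(X)\simeq \hocolim_{A^{\op}}\tilde X .
\]
Applying $\dk\Whf{\Delta}$ to both sides, the upper path of the diagram \eqref{diag:colimitehomotopiquelibres} becomes $\dk\Whf{\Delta}\bigl(\hocolim_{A^{\op}}\tilde X\bigr)$.

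\textbf{Step 2: commute $\dk\Whf{\Delta}$ past the homotopy colimit.} Since $\dk\Whf{\Delta}$ is left Quillen from the Kan--Quillen structure to the projective structure on $\Ch(\Ab)$, its total left derived functor commutes with homotopy colimits. This is where the main care is needed: homotopy colimits should be modelled in the same way on both sides. I would take the projective model structures on the diagram categories $\Homi(A^{\op},\EnsSimp)$ and $\Homi(A^{\op},\Ch(\Ab))$, in which $\hocolim$ is the left derived functor of $\limind$. Postcomposition with the left Quillen functor $\dk\Whf{\Delta}$ is again left Quillen between these projective structures, because it sends generating projective cofibrations $a\times K\to a\times L$ to cofibrations. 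It also commutes with $\limind$, being a left adjoint. Composing left derived functors therefore gives
\[
\mathbf{L}(\dk\Whf{\Delta})\circ\hocolim \;\simeq\; \hocolim\circ\,\mathbf{L}(\dk\Whf{\Delta})_* .
\]
Because $\dk\Whf{\Delta}$ preserves all weak equivalences, its left derived functor is computed by $\dk\Whf{\Delta}$ itself, both objectwise and after taking the homotopy colimit. No cofibrant replacement of $\tilde X$ is therefore needed to identify the terms.

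\textbf{Step 3: identify the diagram.} It remains to compute the diagram $\dk\Whf{\Delta}\circ\tilde X$. For each object $a$ of $A$, the simplicial set $\tilde X(a)$ is the constant simplicial set on the set $X(a)$. Its free simplicial abelian group is constant on $\Z^{(X(a))}$, and its normalized complex is $\Z^{(X(a))}$ concentrated in degree $0$. Naturally in $a$, this gives $\dk\Whf{\Delta}\circ\tilde X\cong in_0\circ\Whf{A}(X)$. Hence
\[
\dk\Whf{\Delta}\,\nerf i_A(X)\;\simeq\;\hocolim_{A^{\op}} in_0\,\Wh{A}{X}\;=\;\H{A}{\Wh{A}{X}},
\]
which is the lower path of the diagram. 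Every isomorphism used is natural in $X$, since Thomason's isomorphism and the derived-functor comparison are both natural, so the diagram commutes up to natural isomorphism.
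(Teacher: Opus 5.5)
Your proposal is correct and follows the paper's proof: Thomason's theorem, then the fact that the left Quillen functor $\dk\Whf{\Delta}$, which preserves weak equivalences, commutes with homotopy colimits, then the identification $\dk\Whf{\Delta}\circ\tilde{X}\cong\Wh{A}{X}[0]$. Your use of projective model structures on the diagram categories only spells out the commutation step that the paper states in one line.
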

\begin{proof}
If $X$ is a presheaf on $A$, we have the following natural
isomorphisms in $\Hotab$:
\begin{align*}
{\dk\Whf{\Delta}\nerf} i_A(X) 
&\simeq
{\dk\Whf{\Delta}}\hocolim_{{A}^{\op}}^{\Hot_{\Delta}} \tilde{X}
\\
&\simeq \hocolim_{{A}^{\op}}^{\Hotab} \dk \circ \Whf{\Delta} \circ \tilde{X}
\end{align*}
where the first isomorphism comes from Proposition \ref{i_Ahocolim} and the
second from the fact that left Quillen functors commute with homotopy colimits.
To conclude, note that the diagram 
\[
{A}^{\op} \xrightarrow{\tilde{X}}  
\pref{\Delta} \xrightarrow{\Whf{\Delta}} 
\prefab{\Delta} \xrightarrow{\dk} 
\Ch(\Ab)
\]
coincide with the diagram $\Wh{A}{X}[0]$, associating to any object $a$ of $A$
the chain complex concentrated in degree $0$ of value~$\Z^{(Xa)}$. We then get
the natural isomorphism in
$\Hotab$ 
\begin{align*}
\dk\Whf{\Delta}\nerf i_A(X) 
&\simeq \hocolim_{{A}^{\op}}^{\Hotab}(\Wh{A}{X}[0])
\end{align*}
which concludes the proof.
\end{proof}

\begin{coro}\label{coro:homologyCoefRep}
For any small category $A$ and any object $a$ in $A$, there is a
natural isomorphism 
\[
\H{A}{\Wh{A}{a}} \simeq \Z \leftarrow 0 \leftarrow 0 \leftarrow \cdots 
\]
in $\Hotab$. 
\end{coro}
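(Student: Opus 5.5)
The plan is to apply Proposition~\ref{prop:colimiteHomotopiqueLibres} to the representable presheaf $a$. Since $\Whf{A}(a) = \Wh{A}{a}$ by definition, the natural isomorphism of diagram~\eqref{diag:colimitehomotopiquelibres} evaluated at $X = a$ gives
\[
\H{A}{\Wh{A}{a}} \simeq \dk\Whf{\Delta}\nerf\, i_A(a) = \dk\Whf{\Delta}\nerf(\tranche{A}{a})
\]
in $\Hotab$. This isomorphism is natural in $a$, because it is the restriction along the Yoneda embedding of a natural isomorphism of functors on $\pref{A}$.

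Next, I use that $\tranche{A}{a}$ has a final object, namely $(a,\id_a)$. By the basic localizer axiom recalled in Paragraph~\ref{par:defAspherique}, the functor $\tranche{A}{a}\to e$ is therefore in $\W_\infty$. So $\nerf(\tranche{A}{a}) \to \nerf(e) = \Delta_0$ is a simplicial weak equivalence. The functor $\dk\Whf{\Delta}$ preserves weak equivalences, as recalled just before Proposition~\ref{i_Ahocolim}. Hence we obtain a quasi-isomorphism
\[
\dk\Whf{\Delta}\nerf(\tranche{A}{a}) \to \dk\Whf{\Delta}(\Delta_0).
\]

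It remains to compute $\dk\Whf{\Delta}(\Delta_0)$. Here $\Whf{\Delta}(\Delta_0)$ is the constant simplicial abelian group with value $\Z$. All its face maps are identities, so the normalized complex has $\Z$ in degree $0$ and $\bigcap_{0<i\leq n}\ker d_i = 0$ for $n\geq 1$. This is exactly the complex $\Z \leftarrow 0 \leftarrow 0 \leftarrow \cdots$.

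For naturality in $a$, note that every morphism $a\to a'$ induces a map over $e$. The comparison maps to the constant complex are therefore compatible, and the composite isomorphism is natural. I do not expect any real obstacle: everything follows directly from the earlier results. The only point needing a little care is checking the naturality just described.
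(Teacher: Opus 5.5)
Your proof is correct and follows the same route as the paper. The paper also obtains the statement directly from Proposition~\ref{prop:colimiteHomotopiqueLibres} applied to the representable presheaf $a$, using that $\tranche{A}{a}$ has a final object and is therefore aspherical; you additionally write out the computation of $\dk\Whf{\Delta}(\Delta_0)$ and the naturality in $a$, which the paper leaves implicit.
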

\begin{proof}
Since for any object $a$ in $A$, the category $\tranche{A}{a}$ is aspherical
(\ref{par:defAspherique}), this is a direct corollary of
Proposition~\ref{diag:colimitehomotopiquelibres}.
\end{proof}

\begin{paragr}
If $A$ is a small category, we denote by
\[
  \Z_A: {A}^{\op} \to \Ab \mdvirg a \mapsto \Z
  \]
the constant abelian presheaf of value $\Z$ on $A$, and by~$\H{A}{\Z}$ the
homology of $A$ with coefficients in~$\Z_A$. If $u:A\to B$ is a functor between
small categories, the restriction functor $(u^*)^\ab: \prefab{B} \to
\prefab{A}$ sends $\Z_B$ to $\Z_A$, and induces, by universal property, a
canonical morphism 
\[
\H{u}{\Z}: \H{A}{\Z} \to \H{B}{\Z} 
\]
in $\Hotab$. Commutativity of diagram \ref{diag:colimitehomotopiquelibres}
implies that if $u$ is an element
of $\W_\infty$, then $\H{u}{\Z}$ is an isomorphism in~$\Hotab$. This
means we have defined a canonical homology functor 
\[
\H{-}{\Z}: \Hot \to \Hotab \pbox{.}
\]
\end{paragr}

\begin{remark}
Let $A$ be a small category, and $e_{\pref{A}}$ the final object in the
category $\pref{A}$. Since $i_A(e_{\pref{A}}) \simeq A$, commutativity of
diagram~\ref{diag:colimitehomotopiquelibres} implies that for any small
category $A$, there is a natural isomorphism 
\[
\H{A}{\Z} \simeq \dk\Wh{\Delta}{\nerf{A}}
\]
in $\Hotab$. In other words, we redefined the classical homology
functor for small categories in a way that doesn't give a preeminent
role to $\Delta$. We will give explicit ways to compute
this functor without the use of simplicial methods in Section
\ref{sec:integrators}.
\end{remark}

\begin{remark}\label{remark:homologieTrancheetLibres}
Commutativity of diagram \ref{diag:colimitehomotopiquelibres} can
be reformulated in the following way: for any presheaf~$X$ on $A$,
there is a canonical isomorphism 
\[
\H{A}{\Wh{A}{X}} \simeq \H{\tranche{A}{X}}{\Z} 
\]
in $\Hotab$.
\end{remark}

\begin{paragr}
Let $A$ be a small category. We say that a morphism $f: X \to Y$
between abelian presheaves on~$A$ is a \emph{weak equivalence} if its
image by the functor $\Hf{A}: \prefab{A}\to\Hotab$ is an isomorphism.
We denote by~$\Wab_A$ the class of weak equivalences of abelian
presheaves on $A$, and by 
\[
\Hotab_A:= (\Wab)^{-1}\prefab{A} 
\]
the localization of $\prefab{A}$ by the class of weak equivalences of
abelian presheaves. The functor $\Hf{A}$ then induces a functor 
\[
\overbar{\Hf{A} }: \Hotab_A \to \Hotab
\]
and commutativity of diagram
\ref{diag:colimitehomotopiquelibres} implies that we get a diagram
\[
\xymatrix{
\Hot_A \ar[r]^-{\Whf{A}} \ar[d]_{\overbar{i_A}} 
&
\Hotab_A \ar[d]^{\overbar{\Hf{A}}} 
\\
\Hot \ar[r]_-{\H{-}{\Z}} & \Hotab 
} 
\]
that is commutative up to a natural isomorphism.
\end{paragr}

\begin{defin}\label{def:homPseudoTest}
A small category $A$ is a \emph{homologically pseudo-test category} if
the functor $\overbar{\Hf{A}}$ is an equivalence of categories.
\end{defin}

\begin{remark}
Inspired by the terminology of test categories, the prefix <<pseudo>>
underlines the fact that we do not ask for any conditions on the
quasi-inverse of the functor $\overline{\Hf{A}}$. We introduce
in~\cite{hubert2025phd} a notion of homologically test categories and
homologically test functors that we won't mention in this paper.
\end{remark}

\begin{example}
The prime example of a homologically pseudo-test category is given by the
simplex category~$\Delta$. Recall that the Dold-Kan correspondence
(\cite[Theorem 8.1]{kan1958functors}, \cite[Theorem 1.9]{dold1958homology})
states that the normalized chain complex functor $\dk: \prefab{\Delta} \to
\Ch(\Ab)$ is an equivalence of categories. Moreover, as explained in Example
\ref{ex:normalizedComplexHomology}, this functor computes the homology functor
$\Hf{\Delta}$, which implies that the induced functor between the localized
categories is also an equivalence of categories.
\end{example}

\begin{example}
We will prove in this paper that the categories $\Theta_n$ for every integer
$n>0$, and $\Theta$ are homologically pseudo-test categories.
Using techniques that we do not need to introduce in this paper, we provide in
\cite[Chapter 5]{hubert2025phd} numerous other examples, including the
subcategory $\Delta'$ of monomorphisms of $\Delta$ and the cubical category
$\cub$ without connections.
We also show in \cite{hubert2025phd}, using materials exposed here, that the
category $\Delta^n$ for every integer $n>0$, the cubical category with
connections and the reflexive globular category $\Grefl$ are homologically
pseudo-test categories.
\end{example}

\section{Integrators}\label{sec:integrators}

Grothendieck describes in \cite{pursuingstacks} explicit methods to compute the
homology functor without the need of simplicial techniques and the Bousfield-Kan
formula \cite{bousfield1972homotopy}, that we expose
in this section. While we wont do an extensive study of integrators in this
article (this has been done in \cite{hubert2025phd}), we cannot solely rely on
the Bousfield-Kan formula to study $\Wab_\infty$-asphericity in Section
\ref{sec:asphericalMorphisms}.

\begin{paragr}
If $A$ is a small category, its \emph{additive envelope}, denoted $\Add(A)$, is
the small category defined by the following universal property: for every
additive category $\M$ (that is, an $\Ab$-enriched category with finite
biproducts and a zero object), there is a natural isomorphism 
\[
\Homi(A, \M) \simeq \Homadd(\Add(A),\M) 
\]
where $\Homadd$ denotes the subcategory of functors commuting to finite sums.
For any functor $F: A \to \M$, we denote by $\Add(F)$ the functor making the
following diagram commutative: 
\[
\xymatrix{
A \ar[r]^{F} \ar[d]^{} & \M \\
\Add(A) \ar[ru]_{\Add(F)} &\pbox{.}
} 
\]
\end{paragr}
\begin{paragr}\label{paragr:descrAddEnv}
For any small category $A$, the additive envelope $\Add(A)$ can be realized as
the full subcategory of abelian presheaves on $A$ of the form 
\[
\bigoplus_{i\in I}\Wh{A}{a_i}: a \mapsto \bigoplus_{i\in I}
\Z^{(\Hom_A(a, a_i))}
\]
where $I$ is a \emph{finite} set and $a_i$ is an object of $A$ for every $i$ in
$I$. %
Given any additive category $\M$ and any functor $F
: A \to \M$, the value of $\Add(F)$ on an object $\bigoplus_{i\in
I}\Wh{A}{a_i}$ is then given by the formula
\[
\Add(F)\left(\bigoplus_{i\in I}\Wh{A}{a_i}\right) = \bigoplus_{i\in
I}F(a_i) \pbox{.}
\]
\end{paragr} 

\begin{prop}\label{prop:densityAdd}
For any small category $A$, $\Add(A)$ is a dense subcategory of $\prefab{A}$.
\end{prop}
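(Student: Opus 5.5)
The strategy is to reduce density of $\Add(A)$ to density of the representables $\Wh{A}{a}$, and to obtain the latter from a canonical-colimit (co-Yoneda) argument in $\prefab{A}$. Recall that a full subcategory $\mathcal{D}\subset\mathcal{C}$ is dense if every object $X$ of $\mathcal{C}$ is the colimit of the canonical diagram $\tranche{\mathcal{D}}{X}\to\mathcal{C}$. Equivalently, the nerve functor $\mathcal{C}\to\Homi(\mathcal{D}^{\op},\Set)$, $X\mapsto\Hom(-,X)$, is fully faithful. I will verify this second formulation for $\mathcal{D}=\Add(A)$, realized as in paragraph~\ref{paragr:descrAddEnv} as a full subcategory of $\prefab{A}$.

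The first step is the additive Yoneda lemma. For every object $a$ of $A$ and every abelian presheaf $X$, there is a natural isomorphism $\Hom_{\prefab{A}}(\Wh{A}{a},X)\simeq X(a)$. This holds because $\Wh{A}{a}$ is the free abelian presheaf on the representable $a$, and $\Whf{A}$ is left adjoint to the forgetful functor $\U$. Since finite direct sums are biproducts in $\prefab{A}$, it follows that $\Hom(\bigoplus_{i\in I}\Wh{A}{a_i},X)\simeq\prod_{i\in I}X(a_i)$, naturally in both variables.

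The second step is full faithfulness of the nerve functor. Let $X,Y$ be abelian presheaves and let $\phi:\Hom(-,X)\to\Hom(-,Y)$ be a natural transformation of set-valued functors on $\Add(A)^{\op}$. Restricting $\phi$ along the full inclusion $A\hookrightarrow\Add(A)$, $a\mapsto\Wh{A}{a}$, and applying the first step gives maps $\phi_a:X(a)\to Y(a)$ natural in $a\in A$. We must check that these maps are additive. Here we use the morphisms in $\Add(A)$ that do not come from $A$: the codiagonal $\nabla:\Wh{A}{a}\to\Wh{A}{a}\oplus\Wh{A}{a}$ (the map $(1,1)$ into the sum) and the two coprojections. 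Under the first step, precomposition with $\nabla$ corresponds to the addition map $X(a)\times X(a)\to X(a)$. Precomposition with the coprojections corresponds to the two inclusions $X(a)\to X(a)\times X(a)$.

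Naturality of $\phi$ at the object $\Wh{A}{a}\oplus\Wh{A}{a}$ requires care, because the component of $\phi$ there is a priori an arbitrary map $X(a)^2\to Y(a)^2$. It is compatible with the projections induced by the coprojections, so it must equal $\phi_a\times\phi_a$. Naturality with respect to $\nabla$ then gives $\phi_a(x+x')=\phi_a(x)+\phi_a(x')$. Hence the $\phi_a$ assemble into a morphism of abelian presheaves $f:X\to Y$. Naturality of $\phi$ with respect to the biproduct structure maps shows that $\phi$ is determined by its values on the objects $\Wh{A}{a}$, so $\phi=\Hom(-,f)$. Faithfulness is immediate from the first step, since $f$ is recovered from its components $f_a$.

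The main obstacle is exactly this additivity check: the verification that $\phi$ is already determined on the direct sums and that it respects the codiagonal. This is precisely where one uses $\Add(A)$ rather than $A$ alone. The representables by themselves are not dense in $\prefab{A}$ in the unenriched sense, since the canonical colimit over $\tranche{A}{\U X}$ would produce the free abelian presheaf on $\U X$ and not $X$ itself.

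As an alternative presentation, one can write any $X$ as a reflexive coequalizer $\bigoplus\Wh{A}{b}\rightrightarrows\bigoplus\Wh{A}{a}\to X$ of sums of representables, and note that infinite sums are filtered colimits of finite sums. This exhibits $X$ as a colimit of objects of $\Add(A)$. One then checks that this colimit is the canonical one, which again amounts to the full faithfulness argument above.
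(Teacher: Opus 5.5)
Your argument is correct and is essentially the paper's. The paper checks that cocones over the canonical diagram $\tranche{\Add(A)}{X}\to\prefab{A}$ with vertex $Y$ correspond to morphisms $X\to Y$, which is exactly your full faithfulness of $Y\mapsto\Hom(-,Y)|_{\Add(A)}$, and it uses the same diagonal morphisms $\Wh{A}{a}\to\Wh{A}{a}\oplus\Wh{A}{a}$ to force additivity. There are two small slips, neither of which affects the argument: the map $(1,1)$ is the diagonal rather than the codiagonal, and precomposition with the coprojections gives the projections $X(a)^2\to X(a)$, not inclusions, which is the version you correctly use in the next paragraph.
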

\begin{proof}
We need to show that for any abelian
presheaf $X$ on $A$, the canonical morphism 
\[
\underset{L \in
\Add(A)/X}{\limind\nolimits^{\prefab{A}}}\!\!\!\!L \to X
\]
is an isomorphism. 
The proof is similar to the proof of the density of
representable presheaves in the $\Set$-valued case, with an additional use of
the diagonal morphisms $(\Wh{A}{a}, X) \to (\Wh{A}{a}\oplus\Wh{A}{a},X)$ of
$\tranche{\Add(A)}{X}$ to check that cocones with vertex $Y$ correspond to
morphisms of \emph{abelian} presheaves from $X$ to $Y$.
We leave the details to the reader, which can also be found in
\cite[Proposition~2.2.4]{hubert2025phd}. \end{proof}

\paragr Given a small category $A$, a cocomplete additive category $\M$ and a
functor~$F: A \to \M$, we get, using Proposition \ref{prop:densityAdd}, a
functor $F_!^\ab: \prefab{A}\to\M$ by setting, for any abelian presheaf $X$ on
$A$,
\[
F_!^\ab(X) = \limind_{L \in \tranche{\Add(A)}{X}} \Add(F)(L) \pbox{.}
\]

\begin{prop}\label{prop:leftKanExtAb}
Let $\M$ be a cocomplete additive category and $A$ a small category. For any
functor~$F: A \to \M$, the functor 
\[
F_!^\ab: \prefab{A}\to\M 
\]
is left adjoint to the functor 
\[
(F^*)^\ab: \M\to\prefab{A} 
\]
defined for every object $x$ of $M$ by 
\begin{align*}
(F^*)^\ab(x): {A}^{\op} &\to \M \\
a &\mapsto \Hom_M(F(a),x) \pbox{.}
\end{align*}
\end{prop}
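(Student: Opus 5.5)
The plan is to produce, for every abelian presheaf $X$ on $A$ and every object $x$ of $\M$, a bijection
\[
\Hom_\M(F_!^\ab(X),x) \simeq \Hom_{\prefab{A}}(X,(F^*)^\ab(x))
\]
that is natural in $X$ and $x$. We get there by reducing both sides to data indexed by $\tranche{\Add(A)}{X}$, using the density of Proposition~\ref{prop:densityAdd}. First we check that $(F^*)^\ab(x)$ really is an abelian presheaf. Since $\M$ is additive, each $\Hom_\M(F(a),x)$ is an abelian group, and precomposition by $F(g)$ is a group homomorphism. So $(F^*)^\ab$ is a well-defined functor $\M\to\prefab{A}$.

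\textbf{Step 1: the left side.} By definition of $F_!^\ab(X)$ as a colimit, $\Hom_\M(F_!^\ab(X),x)$ is the set of cocones from the diagram $L\mapsto\Add(F)(L)$, for $(L,L\to X)$ in $\tranche{\Add(A)}{X}$, to $x$. That is, it is $\limproj_{L\in\tranche{\Add(A)}{X}}\Hom_\M(\Add(F)(L),x)$.

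\textbf{Step 2: the case $X=L$ in $\Add(A)$.} We prove the natural isomorphism
\[
\Hom_\M(\Add(F)(L),x)\simeq\Hom_{\prefab{A}}(L,(F^*)^\ab(x)).
\]
Write $L=\bigoplus_{i\in I}\Wh{A}{a_i}$ with $I$ finite. Then $\Add(F)(L)=\bigoplus_i F(a_i)$ by \ref{paragr:descrAddEnv}, so the left side is $\prod_i\Hom_\M(F(a_i),x)$. On the right side, the additive Yoneda lemma gives $\Hom_{\prefab{A}}(\Wh{A}{a},Y)\simeq Y(a)$, since $\Wh{A}{a}$ is the free abelian presheaf on the representable $a$. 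Combined with finite biproducts in $\prefab{A}$, the right side is $\prod_i\Hom_\M(F(a_i),x)$ as well.

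The point to verify is that this identification is natural for \emph{all} morphisms of $\Add(A)$, and not only those coming from $A$. Morphisms of $\Add(A)$ are matrices of $\Z$-linear combinations of arrows of $A$. Both sides are additive in $L$: $\Hom_\M(-,x)$ turns biproducts into products and is additive on morphisms, and the same holds for $\Hom_{\prefab{A}}(-,(F^*)^\ab(x))$. Both sides agree on the generating arrows $a\to a'$ of $A$ by Yoneda. Hence they agree on all of $\Add(A)$. Equivalently, both sides are additive functors $\Add(A)^{\op}\to\Ab$ extending the same functor on $A^{\op}$, so the universal property of $\Add(A)$ identifies them.

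\textbf{Step 3: passing to the limit.} Taking $\limproj$ over $\tranche{\Add(A)}{X}$ of the isomorphism of Step~2 gives
\[
\Hom_\M(F_!^\ab(X),x)\simeq\limproj_{L}\Hom_{\prefab{A}}(L,(F^*)^\ab(x))\simeq\Hom_{\prefab{A}}\Bigl(\limind_L L,(F^*)^\ab(x)\Bigr).
\]
The last term is $\Hom_{\prefab{A}}(X,(F^*)^\ab(x))$ by Proposition~\ref{prop:densityAdd}. Naturality in $x$ is clear at each stage. Naturality in $X$ follows from functoriality of the comma category $\tranche{\Add(A)}{X}$ in $X$.

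We expect the main obstacle to be the naturality check in Step~2 with respect to non-elementary morphisms of $\Add(A)$, such as sums, differences and diagonals. This is precisely where additivity of $\M$ is used. The density result already absorbs the corresponding subtlety on the presheaf side.
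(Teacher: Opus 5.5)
Your proof is correct and follows essentially the same route as the paper. You write $\Hom_\M(F_!^\ab X,x)$ as a limit over $\tranche{\Add(A)}{X}$, identify each term with $\Hom_{\prefab{A}}(L,(F^*)^\ab(x))$ via additivity and the additive Yoneda lemma, and reassemble using the density of Proposition~\ref{prop:densityAdd}. Your explicit check that this termwise identification is natural for all morphisms of $\Add(A)$, and not only those coming from $A$, is a detail the paper's chain of isomorphisms leaves implicit.
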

\begin{proof}
For any object $x$ in $\M$ and any abelian presheaf $X$ on $A$, we
have the following chain of natural isomorphisms:
\begin{align*}
\Hom_M\left(F_!^\ab\,X, x \right)
  & \simeq \Hom_M\big(\!\!\!\!\!
      \sideset{}{^M}\limind_{L \in \tranche{\Add(A)}{X}} \!\!\!\Add(F)(L),\, x
      \big)
  \\ & \simeq \!\!\sideset{}{^\Ab}\limproj_{\substack{L \in \tranche{\Add(A)}{X} \\
    \mathclap{L = \oplus_i\Wh{A}{a_i}}}} 
\Hom_M\Big(\bigoplus_i F(a_i),\, x\Big)
  \\ & \simeq \!\!\sideset{}{^\Ab}\limproj_{\substack{L \in \Add(A)/X\\ 
    \mathclap{L = \oplus_i\Wh{A}{a_i}}}} \prod_i (F^*)^\ab(x)(a_i)
  \\ & \simeq \!\!\sideset{}{^\Ab}\limproj_{\substack{L \in \tranche{\Add(A)}{X} \\ 
    \mathclap{L = \oplus_{i}\Wh{A}{a_i}}}} \prod_i
\Hom_{\prefab{A}}\big(\Wh{A}{a_i}, (F^*)^\ab(x)\big)
  \\ & \simeq \Hom_{\prefab{A}}\Big(\sideset{}{^{\prefab{A}}}\limind_{L \in
      \Add(A)/X}\!\!\!L,\,\,
      (F^*)^\ab(x)\Big)
  \\ & \simeq \Hom_{\prefab{A}} \left(X, (F^*)^\ab(x)\right) \pbox{.}
  \qedhere
\end{align*}
\end{proof}

\begin{coro}\label{coro:F_!cocontinu}
Let $\M$ be a cocomplete additive category and $A$ a small category.
For any functor $F: A \to
\M$, the functor $F_!^\ab: \prefab{A}\to\M$ is cocontinuous.
\end{coro}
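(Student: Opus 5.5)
The statement follows at once from Proposition~\ref{prop:leftKanExtAb}: a functor that admits a right adjoint preserves every colimit that exists in its domain. So the plan is to exhibit $F_!^\ab$ as a left adjoint and invoke this standard fact.

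Concretely, fix a small diagram $D: I \to \prefab{A}$ with colimit $X = \limind_I D_i$ in $\prefab{A}$, which exists because $\prefab{A}$ is cocomplete. The object $F_!^\ab(X)$ should be a colimit of $F_!^\ab \circ D$ in $\M$. For every object $x$ of $\M$ we have the chain of natural bijections
\[
\Hom_\M(F_!^\ab X, x) \simeq \Hom_{\prefab{A}}\big(X, (F^*)^\ab x\big) \simeq \limproj_{i\in I} \Hom_{\prefab{A}}\big(D_i, (F^*)^\ab x\big) \simeq \limproj_{i\in I} \Hom_\M(F_!^\ab D_i, x) \pbox{.}
\]
The first and last bijections come from the adjunction of Proposition~\ref{prop:leftKanExtAb}, and the middle one is the universal property of the colimit $X$.

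The only point that needs checking is that these bijections are natural in $x$. They must also be compatible with the canonical cocone $F_!^\ab D_i \to F_!^\ab X$, meaning they send the identity of $F_!^\ab X$ to that cocone. Both properties follow from the naturality of the adjunction isomorphism in both variables. By Yoneda, $F_!^\ab X$ together with the image cocone is then a colimit of $F_!^\ab\circ D$.

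There is no real obstacle here; the substance was already carried out in Proposition~\ref{prop:leftKanExtAb}. The hypothesis that $\M$ is cocomplete is used only there, to define $F_!^\ab$ as a colimit over the small category $\tranche{\Add(A)}{X}$.
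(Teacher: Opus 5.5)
Your proposal is correct and matches the paper's approach: the paper states this corollary without separate proof right after Proposition~\ref{prop:leftKanExtAb}, relying on the fact that $F_!^\ab$ is left adjoint to $(F^*)^\ab$ and therefore preserves all small colimits. Your explicit Yoneda-style chain of bijections is simply the standard proof that left adjoints are cocontinuous.
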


\begin{prop}\label{prop:prefabEqCat}
The restriction functor 
\[
\Homi_!(\prefab{A},\M) \to \Homi(A,\M) \mdvirg F \mapsto F \circ
\Whf{A}
\]
is an equivalence of categories, and a quasi-inverse is given by the
functor
\[
\Homi(A,\M)\to\Homi_!(\prefab{A},\M) \mdvirg F \mapsto F_!^\ab \pbox{,}
\]
where $\Homi_!$ denotes the subcategory of cocontinuous functors.
\end{prop}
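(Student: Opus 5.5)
We show that the two functors are mutually quasi-inverse by exhibiting natural isomorphisms $F_!^\ab\circ\Whf{A}\simeq F$ for every functor $F: A\to\M$, and $(G\circ\Whf{A})_!^\ab\simeq G$ for every cocontinuous functor $G:\prefab{A}\to\M$. We first check that both assignments are functors. If $\alpha: F\to F'$ is a natural transformation, it extends uniquely to $\Add(\alpha):\Add(F)\to\Add(F')$ by the universal property of $\Add(A)$. Taking colimits over $\tranche{\Add(A)}{X}$ then gives $F_!^\ab(X)\to F'^\ab_!(X)$, natural in $X$. Corollary~\ref{coro:F_!cocontinu} ensures that $F_!^\ab$ indeed lands in $\Homi_!(\prefab{A},\M)$.

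For the first isomorphism, we use the adjunction of Proposition~\ref{prop:leftKanExtAb}. For any object $a$ of $A$ and any object $x$ of $\M$, the additive Yoneda lemma and the definition of $(F^*)^\ab$ give
\[
\Hom_\M\big(F_!^\ab(\Wh{A}{a}),x\big)\simeq\Hom_{\prefab{A}}\big(\Wh{A}{a},(F^*)^\ab(x)\big)\simeq (F^*)^\ab(x)(a)=\Hom_\M(F(a),x) .
\]
These isomorphisms are natural in $a$ and in $x$. The Yoneda lemma in $\M$ then yields $F_!^\ab(\Wh{A}{a})\simeq F(a)$, naturally in $a$. Alternatively, one can observe that $\tranche{\Add(A)}{\Wh{A}{a}}$ has a final object $(\Wh{A}{a},\id)$, so the defining colimit is just $\Add(F)(\Wh{A}{a})=F(a)$. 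Either description is natural in $F$.

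For the second isomorphism, let $G$ be cocontinuous and put $F=G\circ\Whf{A}$. Since $G$ is additive, $G$ restricted to $\Add(A)$ coincides with $\Add(F)$. Indeed, $G$ preserves finite direct sums, these being colimits, so by the uniqueness in the universal property of $\Add(A)$ (formula of \ref{paragr:descrAddEnv}) the two agree. Proposition~\ref{prop:densityAdd} writes $X\simeq\limind_{L\in\tranche{\Add(A)}{X}}L$, and because $G$ is cocontinuous,
\[
G(X)\simeq\limind_{L\in\tranche{\Add(A)}{X}}G(L)\simeq\limind_{L\in\tranche{\Add(A)}{X}}\Add(F)(L)=F_!^\ab(X) .
\]
This isomorphism is natural in $X$ and in $G$. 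Together with the previous paragraph, this shows the two functors are quasi-inverse equivalences.

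The only delicate point is the identification $G_{|\Add(A)}\simeq\Add(G\circ\Whf{A})$, which requires $G$ to be additive on morphisms and not merely to preserve sums. This follows because a functor between additive categories that preserves finite biproducts is automatically additive, as addition of morphisms is expressed through the diagonal and codiagonal maps. The density statement of Proposition~\ref{prop:densityAdd} does the rest.
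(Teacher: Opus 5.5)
Your proof is correct and takes the same route as the paper. The isomorphism $F_!^\ab(\Wh{A}{a})\simeq F(a)$ comes from the construction; your final-object argument is exactly what the paper calls ``by construction''. The isomorphism for a cocontinuous $G$ comes from comparing $G$ with $(G\circ\Whf{A})_!^\ab$ on representables. What you add is the detail the paper calls ``straightforward'': $G$ is additive, so agreement on $A$ extends to $\Add(A)$, and density of $\Add(A)$ (Proposition~\ref{prop:densityAdd}) together with cocontinuity then extends it to all of $\prefab{A}$.
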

\begin{proof}
By construction, if $a$ is an object of $A$, we have a natural
isomorphism $F_!^\ab(\Z^{(a)}) \simeq F(a)$. Moreover, if $F:
\prefab{A}\to \M$ is a cocontinuous functor, we can check that $F$ and
$(F\circ\Whf{A})_!^\ab$ agree by checking on representables, which is
straightforward.
\end{proof}

\paragr Let $A$ be a small category. Denote by 
\[
- \odot_A -: \prefab{A} \times \prefab{{A}^{\op}} \to \Ab 
\]
the bifunctor defined by the commutative diagram
\[
\xymatrix{
\prefab{A}\times\prefab{{A}^{\op}} 
\ar[r]^-{-\odot_A -} 
\ar[d]_{\id \times (-)_!^\ab}
&
\Ab \\
\prefab{A} \times \Homi_!(\prefab{A},\Ab)
\ar[ru]_{\ev}
& \pbox{,}
}
\]
where we used the fact that $\prefab{{A}^{\op}}=\Homi({A},\Ab)$. 
In other words, for every abelian presheaf $X$ on $A$ and every
abelian presheaf $Y$ on ${A}^{\op}$, we have the following identity: 
\[
X \odot_A Y = Y_!^\ab(X) \pbox{.} 
\]

\begin{prop}\label{prop:tensorProductRepresentableLeft}
Let $a$ be an object of a small category $A$ and $Y$ an abelian
presheaf on ${A}^{\op}$. There is a canonical isomorphism 
\[
\Wh{A}{a} \odot_A Y \simeq Y(a) 
\]
in $Ab$.
\end{prop}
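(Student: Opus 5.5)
By definition, $\Wh{A}{a}\odot_A Y = Y_!^\ab(\Wh{A}{a})$, where $Y$ is viewed as a functor $A\to\Ab$ via $\prefab{{A}^{\op}}=\Homi(A,\Ab)$. The plan is to compute this value in two independent ways. The first is directly from the colimit formula defining $Y_!^\ab$. The second, which I will actually use, goes through the adjunction of Proposition~\ref{prop:leftKanExtAb} together with the Yoneda lemma.

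For the adjunction route, fix an abelian group $x$. Proposition~\ref{prop:leftKanExtAb} gives a natural isomorphism
\[
\Hom_\Ab\big(Y_!^\ab(\Wh{A}{a}),x\big)\simeq \Hom_{\prefab{A}}\big(\Wh{A}{a},(Y^*)^\ab(x)\big) \pbox{.}
\]
Since $\Whf{A}$ is left adjoint to the forgetful functor $\U$, the abelian Yoneda lemma identifies the right-hand side with $(Y^*)^\ab(x)(a)=\Hom_\Ab(Y(a),x)$. One should check that this chain of identifications is natural in $x$. Once that is done, the Yoneda lemma in $\Ab$ yields an isomorphism $Y_!^\ab(\Wh{A}{a})\simeq Y(a)$.

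For canonicity, I will also record the concrete description of the isomorphism. The object $\Wh{A}{a}$, viewed as a one-summand object of $\Add(A)$ together with the identity map, is a terminal object of $\tranche{\Add(A)}{\Wh{A}{a}}$. Hence the colimit defining $Y_!^\ab(\Wh{A}{a})$ is simply $\Add(Y)(\Wh{A}{a})=Y(a)$, using the formula of paragraph~\ref{paragr:descrAddEnv}. This agrees with the first line of the proof of Proposition~\ref{prop:prefabEqCat}.

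The only delicate point is making sure that the adjunction isomorphism and the Yoneda identification are natural in $x$, so that the Yoneda lemma in $\Ab$ applies and the resulting isomorphism is the canonical one. Everything else follows formally from results already stated.
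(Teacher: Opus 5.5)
Your proof is correct. The paper just points to Proposition~\ref{prop:prefabEqCat}, whose proof asserts $F_!^\ab(\Wh{A}{a})\simeq F(a)$ ``by construction''. That is exactly your terminal-object observation: $\Add(A)$ is a full subcategory of $\prefab{A}$ containing $\Wh{A}{a}$, so $(\Wh{A}{a},\id)$ is terminal in $\tranche{\Add(A)}{\Wh{A}{a}}$, and the defining colimit reduces to $\Add(Y)(\Wh{A}{a})=Y(a)$.

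Your main route, through the adjunction of Proposition~\ref{prop:leftKanExtAb} and the Yoneda lemma, is also valid. It is not circular: the proof of that adjunction uses only the colimit formula, the abelian Yoneda lemma and density, not the present statement. Naturality in $x$ is automatic, because the adjunction bijection is natural in both variables. What this route buys is independence from the specific colimit formula, since it computes the value on $\Wh{A}{a}$ of any left adjoint to $(F^*)^\ab$. On the other hand, it only determines $Y_!^\ab(\Wh{A}{a})$ up to an isomorphism coming from representability. To check that this is the canonical comparison map $Y(a)=\Add(Y)(\Wh{A}{a})\to Y_!^\ab(\Wh{A}{a})$, you end up unwinding the adjunction back to the terminal-object computation. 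So the direct argument is shorter and is the one that actually gives canonicity, and the ``delicate point'' you flag disappears with it.
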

\begin{proof}
This is already contained in Proposition \ref{prop:prefabEqCat}.
\end{proof}

\begin{remark}
The latter proposition, together with cocontinuity, is enough to see
that the bifunctor~$- \odot_A -$ coincide with the tensor product of
functors: for every abelian presheaf $X$ on $A$ and every presheaf $Y$ on
${A}^{\op}$, we have a natural isomorphism of abelian groups
\[
X \odot_A Y \simeq \int^{a \in A} Xa \otimes Ya \pbox{.}
\]
While we could have used this description in this article, we chose to expose the formalism
introduced by Grothendieck in \cite{pursuingstacks}. 
We will still, however, refer to this bifunctor as the tensor product of
functors.
\end{remark}

\begin{prop}\label{prop:tensorProductSymmetry}
Let $X$ be an abelian presheaf on $A$ and $Y$ an abelian presheaf on
${A}^{\op}$. There is a natural isomorphism of abelian groups 
\[
X \odot_A Y \simeq Y \odot_{{A}^{\op}} X \pbox{.}
\]

\end{prop}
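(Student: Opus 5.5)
The plan is to exploit the fact that both sides are cocontinuous in each variable separately, and then reduce to representables using the density of $\Add(A)$ (Proposition~\ref{prop:densityAdd}) together with Proposition~\ref{prop:tensorProductRepresentableLeft}. By definition, $X \odot_A Y = Y_!^\ab(X)$, and this is cocontinuous in $X$ by Corollary~\ref{coro:F_!cocontinu}. Symmetrically, $Y \odot_{{A}^{\op}} X = X_!^\ab(Y)$, where $X$ is viewed as a functor ${A}^{\op}\to\Ab$, i.e.\ an abelian presheaf on $({A}^{\op})^{\op}=A$. This is cocontinuous in $Y$.

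\textbf{First step: cocontinuity of $X\odot_A Y$ in $Y$.} I would check this for fixed $X$. Since colimits in $\prefab{{A}^{\op}}$ are computed pointwise, the formula
\[
X\odot_A Y = \limind_{L\in \tranche{\Add(A)}{X}} \Add(Y)(L), \qquad \Add(Y)\Big(\bigoplus_i \Wh{A}{a_i}\Big)=\bigoplus_i Y(a_i),
\]
is a colimit of finite direct sums of evaluations of $Y$. Each of these commutes with colimits in $Y$, so the whole expression does too. By the same argument, $Y\odot_{{A}^{\op}}X$ is cocontinuous in $X$. Hence both bifunctors $\prefab{A}\times\prefab{{A}^{\op}}\to\Ab$ are cocontinuous in each variable.

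\textbf{Second step: comparison on representables.} On a pair of representables $\Wh{A}{a}$ and $\Wh{{A}^{\op}}{b}$, Proposition~\ref{prop:tensorProductRepresentableLeft} gives
\[
\Wh{A}{a}\odot_A \Wh{{A}^{\op}}{b}\simeq \Wh{{A}^{\op}}{b}(a)=\Z^{(\Hom_A(b,a))}.
\]
Applying the same proposition to ${A}^{\op}$ gives
\[
\Wh{{A}^{\op}}{b}\odot_{{A}^{\op}}\Wh{A}{a}\simeq \Wh{A}{a}(b)=\Z^{(\Hom_A(b,a))}.
\]
These isomorphisms are natural in $a$ and $b$: morphisms in $A$ act on both sides by composition.

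\textbf{Third step: extension in two stages.} Proposition~\ref{prop:prefabEqCat} says a cocontinuous functor out of $\prefab{A}$ is determined, up to unique isomorphism, by its restriction along $\Whf{A}$. I would first fix $Y=\Wh{{A}^{\op}}{b}$ and extend the natural isomorphism from representables $\Wh{A}{a}$ to all $X$. Then I would fix $X$ and extend in $Y$ via the equivalence for ${A}^{\op}$. For the second stage, I need both sides with $X$ fixed, as functors of $Y$, to agree naturally on representables $\Wh{{A}^{\op}}{b}$, naturally in $b$; this is exactly what the first stage provides.

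The main obstacle is naturality bookkeeping. One must check that the isomorphisms of Proposition~\ref{prop:tensorProductRepresentableLeft} are natural in both the representable argument and the coefficient argument, so that the two-stage extension produces an isomorphism natural in $(X,Y)$ jointly. I would handle this by identifying both sides with the coend $\int^{a} Xa\otimes Ya$, which is visibly symmetric, as a sanity check. The step-by-step extension argument above is the honest proof within the paper's formalism.
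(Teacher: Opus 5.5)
Your proposal is correct and follows essentially the paper's route: establish cocontinuity of both bifunctors in each variable separately, then compare on pairs of representables, where both sides are $\Z^{(\Hom_A(b,a))}$. The only differences are cosmetic. You derive cocontinuity of $X\odot_A -$ from the explicit colimit over $\tranche{\Add(A)}{X}$, whereas the paper writes it as the equivalence of Proposition~\ref{prop:prefabEqCat} followed by evaluation at $X$. Your two-stage extension also spells out the naturality bookkeeping that the paper leaves implicit.
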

\begin{proof}
The functor $- \odot_A -$
is cocontinuous on the first variable by Corollary
\ref{coro:F_!cocontinu}. Furthermore, for any abelian presheaf $X$ on
$A$, the functor
\[
  X \odot_A -: \prefab{{A}^{\op}} \to \Ab
  \]
is the composite of the equivalence of categories given by Proposition
\ref{prop:prefabEqCat} and the evaluation functor, both of which are also
cocontinuous.
The situation is identical for the functor~$- \odot_{{A}^{\op}} -$, therefore we
only have to check the isomorphism with representables on both arguments. For
any object $a$ in $A$ and any object $a'$ in
${A}^{\op}$, we have 
\[ \Wh{A}{a} \odot_A \Wh{{A}^{\op}}{a'} \simeq
\Z^{(\Hom_{{A}^{\op}}(a,a'))}
\]
and
\[  \Wh{A^{\op}}{a'} \odot_{A^{\op}} \Wh{A}{a} \simeq
\Z^{(\Hom_A(a',a))}  
\]
which concludes the proof.
\end{proof}

In the rest of this section, we will show how the tensor product of
functors relates to the homology functor we introduced in Section
\ref{sec:homology}.

\begin{paragr}
Recall that for any category $A$, the category $\prefab{A}$ is an
abelian category with enough projective objects. To describe
projective objects, we use the \emph{infinite additive envelope},
denoted $\Addinf(A)$, defined by the following universal property:
for every additive category $\M$, there is a natural isomorphism 
\[
\Homi(A,\M) \simeq \Homaddinf(\Addinf(A),\M) 
\]
where $\Homaddinf$ denotes the subcategory of functors commuting to arbitrary
sums. 
It can be realized as the full subcategory of $\prefab{A}$ whose objects
are arbitrary sums of representables.
\end{paragr}

\begin{prop}\label{prop:resAddinf}
Let $A$ be a small category. \begin{enumerate}
\item Every object of $\Addinf(A)$ is a projective object in
$\prefab{A}$.
\item Every object in $\prefab{A}$ admits a projective resolution by
objects of $\Addinf(A)$.
\item Projectives objects in $\prefab{A}$ are retracts of objects of
$\Addinf{A}$.
\end{enumerate} 
\end{prop}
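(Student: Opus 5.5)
The argument rests on the Yoneda lemma for abelian presheaves: for every object $a$ of $A$ and every abelian presheaf $X$ there is a natural isomorphism
\[
\Hom_{\prefab{A}}(\Wh{A}{a}, X) \simeq Xa \pbox{.}
\]
This holds because $\Whf{A}$ is left adjoint to the forgetful functor $\U$, combined with the usual Yoneda lemma. Equivalently, it is the case $\M=\Ab$ of the adjunction in Proposition~\ref{prop:leftKanExtAb}, applied to the functor $A\to\Ab$ corepresented by $a$.

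For (1), the isomorphism shows that $\Hom(\Wh{A}{a},-)$ is the evaluation functor $X\mapsto Xa$. Epimorphisms in $\prefab{A}$ are objectwise surjections, so evaluation is exact, and $\Wh{A}{a}$ is projective. For an arbitrary sum we have
\[
\Hom\Bigl(\bigoplus_{i}\Wh{A}{a_i},-\Bigr)\simeq\prod_i \Hom(\Wh{A}{a_i},-) \pbox{,}
\]
and a product of exact functors into $\Ab$ is exact. Hence every object of $\Addinf(A)$ is projective.

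For (2), given $X$ I take
\[
P_0=\bigoplus_{a\in\Ob(A)}\ \bigoplus_{x\in Xa}\Wh{A}{a} \pbox{,}
\]
with the map $P_0\to X$ induced by the elements $x$ via Yoneda. This map is objectwise surjective, since each $x\in Xa$ is the image of $\id_a$ in the summand indexed by $x$. Then I replace $X$ by the kernel of $P_0\to X$ and iterate, using that $\prefab{A}$ is abelian so kernels exist. This produces a resolution
\[
\cdots\to P_1\to P_0\to X\to 0
\]
by objects of $\Addinf(A)$, which are projective by (1).

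For (3), let $P$ be projective and choose an epimorphism $P_0\to P$ with $P_0$ in $\Addinf(A)$ as in (2). Projectivity of $P$ lifts $\id_P$ through this epimorphism, which gives a section. Hence $P$ is a retract of $P_0$.

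None of these steps is a real obstacle. The only point needing care is checking that epimorphisms in $\prefab{A}$ are exactly the objectwise surjections, so that exactness of evaluation settles projectivity. This follows because colimits, in particular cokernels, are computed pointwise in a functor category.
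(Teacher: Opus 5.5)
Your proof is correct and follows the paper's own argument. You use the same epimorphism $\bigoplus_{a\in\Ob(A)}\bigoplus_{x\in Xa}\Wh{A}{a}\to X$, iterated on kernels (which the paper leaves implicit), and the same section argument for (3); you also spell out, via $\Hom(\Wh{A}{a},X)\simeq Xa$ from $\Whf{A}\dashv\U$, the projectivity that the paper calls straightforward.

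Only your ``equivalently'' aside is off. With $\M=\Ab$ and $F$ corepresented by $a$, Proposition~\ref{prop:leftKanExtAb} makes evaluation at $a$ a left adjoint rather than identifying it with $\Hom(\Wh{A}{a},-)$, and that proposition's proof already uses this Yoneda isomorphism; nothing in your argument depends on the aside.
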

\begin{proof}
The proof of $(1)$ is straightforward since coproducts of projectives
are projectives. For $(2)$, given an abelian presheaf $X$ on $A$, we
set 
\[
  L_X=\bigoplus_{a\in\Ob(A)}\bigoplus_{x\in Xa}\Wh{A}{a}
\]
and we define an epimorphism $\varphi: L_X \to X$ on each summand
$(a,x)$ by defining $\varphi_{(a,x)}$ to be the element of $Xa$ represented
by the morphism $\Wh{A}{a}\xrightarrow{x} X$. The proof of the
last point then follows from the fact that if $X$ is a projective
object in $\prefab{A}$, the morphism~$\varphi: L_X \to X$ admits a
section, exhibiting $X$ as a retract of $L_X$.
\end{proof}

\begin{prop}\label{produittensorielderive}
Let $X$ be an abelian presheaf on $A$ and $Y$ an abelian presheaf on
${A}^{\op}$. The functors
\begin{align*}
X \odot_A -: \prefab{{A}^{\op}} &\to \Ab  \\
- \odot_A Y: \prefab{A} &\to \Ab
\end{align*}
both admit left derived functors
\begin{align*}
L(X \odot_A -): \mathcal{D}_+(\prefab{{A}^{\op}}) &\to \Hotab \mdvirg \\
L(- \odot_A Y): \mathcal{D}_+(\prefab{A}) &\to \Hotab \mdvirg
\end{align*}
and there is a natural isomorphism
\[
L(X \odot_A -)(Y) \simeq L(- \odot_A Y)(X) 
\]
in $\Hotab$.
\end{prop}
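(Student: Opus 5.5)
The plan is the standard balancing argument for derived tensor products, adapted to the categories $\prefab{A}$ and $\prefab{{A}^{\op}}$. Both are abelian categories with enough projectives (Proposition~\ref{prop:resAddinf}), and every object admits a resolution by objects of $\Addinf(A)$ (resp.\ $\Addinf({A}^{\op})$). Hence both functors, which are right exact, admit left derived functors on $\mathcal{D}_+$. Concretely, $L(-\odot_A Y)(X)$ is computed as $P_\bullet \odot_A Y$, where $P_\bullet\to X$ is a resolution by sums of representables, and symmetrically $L(X\odot_A -)(Y)$ is computed as $X\odot_A Q_\bullet$. Right exactness is clear: by Corollary~\ref{coro:F_!cocontinu} the functor $-\odot_A Y = Y_!^\ab$ is cocontinuous, and in the proof of Proposition~\ref{prop:tensorProductSymmetry} we saw that $X\odot_A -$ is cocontinuous. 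Extending to bounded-below complexes is the usual construction via projective (Cartan--Eilenberg or plain) resolutions.

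\textbf{Step 1: projectives are flat.} For $P$ projective in $\prefab{A}$, the functor $P\odot_A -$ is exact. By Proposition~\ref{prop:resAddinf}(3), $P$ is a retract of a sum $\bigoplus_j \Wh{A}{a_j}$. By cocontinuity in the first variable and Proposition~\ref{prop:tensorProductRepresentableLeft}, we get
\[
\Big(\bigoplus_j \Wh{A}{a_j}\Big)\odot_A Y \simeq \bigoplus_j Y(a_j),
\]
naturally in $Y$. This is an exact functor of $Y$, since evaluation is exact and sums are exact in $\Ab$, and exactness passes to retracts. By Proposition~\ref{prop:tensorProductSymmetry}, the same holds for projectives $Q$ in $\prefab{{A}^{\op}}$: the functor $-\odot_A Q$ is exact.

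\textbf{Step 2: the double complex.} Choose projective resolutions $P_\bullet\to X$ and $Q_\bullet\to Y$, and form the first-quadrant double complex $P_\bullet\odot_A Q_\bullet$. Consider first the natural map $\Tot(P_\bullet\odot_A Q_\bullet)\to P_\bullet\odot_A Y$. Each column $P_p\odot_A Q_\bullet$ is a resolution of $P_p\odot_A Y$ by Step 1. A standard spectral sequence (or acyclic assembly) argument for first-quadrant double complexes then shows this map is a quasi-isomorphism. Symmetrically, the map $\Tot(P_\bullet\odot_A Q_\bullet)\to X\odot_A Q_\bullet$ is a quasi-isomorphism, because $-\odot_A Q_q$ is exact. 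Composing gives an isomorphism $L(-\odot_A Y)(X)\simeq L(X\odot_A -)(Y)$ in $\Hotab$.

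\textbf{Step 3: naturality.} We must check independence of choices and naturality in $X$ and $Y$. Resolutions are functorial up to homotopy, and homotopies pass through $\odot_A$ because it is additive in each variable. The same argument extends to $X$, $Y$ bounded-below complexes by taking projective resolutions of complexes and using total complexes throughout.

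I expect the main obstacle to be Step 1. The bifunctor here is defined through Grothendieck's formalism ($Y_!^\ab$) rather than as a coend, so exactness of $P\odot_A -$ in the second variable has to be extracted from the description on representables and from cocontinuity. Beyond that, only the standard homological algebra remains, and it can be cited.
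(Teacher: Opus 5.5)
Your proposal is correct and is essentially the paper's proof. You use the same zig-zag $\Tot(P_\bullet\odot_A Y)\leftarrow\Tot(P_\bullet\odot_A Q_\bullet)\rightarrow\Tot(X\odot_A Q_\bullet)$, shown to consist of quasi-isomorphisms by the standard double-complex argument, with the same key input: tensoring with a sum of representables is exact, via $\bigl(\bigoplus_i \Wh{A}{a_i}\bigr)\odot_A Y\simeq\bigoplus_i Y(a_i)$ and the symmetry of Proposition~\ref{prop:tensorProductSymmetry}.

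The only differences are cosmetic. The paper resolves directly by objects of $\Addinf(A)$ and $\Addinf({A}^{\op})$, so it needs neither your retract argument for general projectives nor the naturality discussion of your Step~3.
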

\begin{proof}
Consider two projective resolutions~$M_{\bullet}\xrightarrow{\epsilon} X$ and
$L_{\bullet} \xrightarrow{\eta} Y$ by objects of $\Addinf(A)$ and
$\Addinf({A}^{\op})$ respectively. 
We can then form the chain complexes of abelian groups $M_{\bullet}\odot_A Y$,
$X\odot_A L_{\bullet}$, as well as the double complex~$M_{\bullet}\odot_A
L_{\bullet}$ obtained by applying the bifunctor $\odot_A$ term by term.
Considering the complexes $M_{\bullet}\odot_A Y$ and $X\odot_A L_{\bullet}$
as double complexes concentrated in the first column or in the
first row, the resolutions $\epsilon$ and $\eta$ provide morphisms of
double complexes which in turn induce morphisms of complexes 
\[
\Tot (M_\bullet \odot_A Y) \xleftarrow{M \odot_A \eta} \Tot(M_\bullet
\odot_A L_\bullet)
\xrightarrow{\epsilon\odot_A L} \Tot(X \odot_A L_\bullet) \pbox{.}
\]
We will use a classical argument from homological algebra (see for example
\cite[2.7]{weibel1994Introduction}) to show that these morphisms are
quasi-isomorphisms. For this, we only need to show that the functors 
\[
- \odot_A L_j \mdvirg M_i \odot_A - \mdvirg i,j\in \N
\]
are exact. By symmetry (Proposition \ref{prop:tensorProductSymmetry}), it is
enough to show that $N \odot_A -$ is exact for every
object $N=\bigoplus_{i\in I}\Wh{A}{a_i}$ in $\Addinf(A)$. Since for every
abelian presheaf $Y$ on $B$ we have
\[
\left(\bigoplus_{i\in I}\Wh{A}{a_i}\right)\odot_A Y = \bigoplus_{i\in I}Y(a_i) 
\pbox{,}
\]
and since the evaluation and direct sum functors are exact, we can conclude.
\end{proof}

\begin{lemme}
Let $A$ be a small category. If $X$ is an abelian presheaf
on $A$, there is a natural isomorphism of abelian groups
\[
X \odot_A \Z_{{A}^{\op}} \simeq \limind X \pbox{.}
\]
\end{lemme}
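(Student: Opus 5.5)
The plan is to compare two cocontinuous functors $\prefab{A}\to\Ab$ and then check that they agree on representables, as in the proof of Proposition \ref{prop:tensorProductSymmetry}. The first functor is $-\odot_A \Z_{{A}^{\op}} = (\Z_{{A}^{\op}})_!^\ab$. It is cocontinuous by Corollary \ref{coro:F_!cocontinu}. The second functor is $\limind : \prefab{A}\to\Ab$, the colimit over ${A}^{\op}$. It is cocontinuous because it is left adjoint to the constant diagram functor $\Ab\to\prefab{A}$, $G\mapsto$ the constant presheaf of value $G$.

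By Proposition \ref{prop:prefabEqCat}, a cocontinuous functor $\prefab{A}\to\Ab$ is determined up to unique isomorphism by its restriction along $\Whf{A}$ (restricted to representables). So it suffices to produce a natural isomorphism, natural in $a\in A$, between $\Wh{A}{a}\odot_A \Z_{{A}^{\op}}$ and $\limind \Wh{A}{a}$. The left side is $\Z_{{A}^{\op}}(a)=\Z$ by Proposition \ref{prop:tensorProductRepresentableLeft}. For every morphism $a\to a'$ the induced map is the identity of $\Z$, since $\Z_{{A}^{\op}}$ is constant. In other words, the functor $A\to\Ab$ obtained is the constant functor at $\Z$.

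For the right side, I would compute $\limind \Wh{A}{a}$ via the adjunction. For an abelian group $G$ with constant presheaf $c(G)$, we have $\Hom_\Ab(\limind \Wh{A}{a},G)\simeq \Hom_{\prefab{A}}(\Wh{A}{a},c(G))\simeq c(G)(a)=G$. Here the last step is the Yoneda lemma for abelian presheaves, which follows from the free/forgetful adjunction $\Whf{A}\dashv \U$ and the set-valued Yoneda lemma. Hence $\limind \Wh{A}{a}\simeq\Z$, naturally in $G$.

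The one point needing care, and the main obstacle (though mild), is naturality in $a$. A morphism $g:a\to a'$ induces a map $\Z\to\Z$, which corresponds to restriction $c(G)(a')\to c(G)(a)$. That restriction is the identity because $c(G)$ is constant, so the induced map is the identity of $\Z$. Concretely, the generator $\id_a$ of $\Wh{A}{a}(a)$ maps to the class of $g$ in the colimit, and $g$ is identified with $\id_{a'}$. Both functors $A\to\Ab$ are therefore the constant functor $\Z$, and Proposition \ref{prop:prefabEqCat} yields the natural isomorphism $X\odot_A\Z_{{A}^{\op}}\simeq\limind X$ for all $X$.

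Alternatively, one could read the result off the coend formula $\int^{a} Xa\otimes\Z$, which is visibly the colimit of $X$. I prefer the argument above because it stays within Grothendieck's formalism as set up in this section.
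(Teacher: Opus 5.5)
Your proof is correct. It runs the density argument in the opposite variable from the paper.

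The paper keeps $X$ fixed and decomposes the coefficient:
\begin{itemize}
\item it writes $\Z_{A^{\op}}=\Wh{A^{\op}}{e_{\pref{A^{\op}}}}$ and expresses the terminal presheaf as $\limind_{a\in A^{\op}} a$;
\item it pushes this colimit through $\Whf{A^{\op}}$ and through $X\odot_A -$, which is cocontinuous in the second variable as noted in the proof of Proposition~\ref{prop:tensorProductSymmetry};
\item it finishes with $X\odot_A\Wh{A^{\op}}{a}\simeq X(a)$ (Proposition~\ref{prop:tensorProductAddRight}).
\end{itemize}
The colimit $\limind_{a\in A^{\op}}X(a)$ thus appears verbatim at the end of a chain of isomorphisms, each natural in $X$.

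You instead keep the coefficient fixed. You use that both $-\odot_A\Z_{A^{\op}}$ and $\limind$ are cocontinuous in $X$, and reduce via Proposition~\ref{prop:prefabEqCat} to comparing two functors $A\to\Ab$. Your route has two advantages:
\begin{itemize}
\item it needs only first-variable cocontinuity (Corollary~\ref{coro:F_!cocontinu}) and never invokes the symmetry isomorphism or the second-variable machinery;
\item naturality in $X$ comes for free, as an isomorphism of cocontinuous functors.
\end{itemize}
The price is that you must compute $\limind\Wh{A}{a}\simeq\Z$ and check by hand that the transition maps are identities. You do this correctly: the class of $g\in\Wh{A}{a'}(a)$ equals that of $\id_{a'}$, since $g$ is the image of $\id_{a'}$ under the restriction map $\Wh{A}{a'}(a')\to\Wh{A}{a'}(a)$ induced by $g$.

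The paper's route avoids this bookkeeping by reusing the symmetry and second-variable results it had already established.
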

\begin{proof}
Given an abelian presheaf $X$ on $A$, we have the following chain of
natural isomorphisms in $\Ab$:
\begin{align*}
X \odot_A \Z_{A^{\op}} 
& \simeq X \odot_A \Wh{{A^{\op}}}{ e_{\pref{{A^{\op}}}} }\\
& \simeq X \odot_A 
\Wh{{A^{\op}}}{ \limind\nolimits_{a\in
{A^{\op}}}^{\pref{{A^{\op}}}} a }
\\ & \simeq X \odot_A \limind\nolimits_{a \in {A^{\op}}}^{\prefab{{A^{\op}}}}
\Wh{{A^{\op}}}{a} 
\\ & \simeq \limind\nolimits_{a \in {A^{\op}}}^{\Ab} 
\left( X \odot_A \Wh{{A^{\op}}}{a} \right)
\\ & \simeq \limind\nolimits_{a \in {A^{\op}}}^{\Ab} X(a) \pbox{.}
\qedhere
\end{align*}
\end{proof}

\begin{coro}
Let $A$ be a small category, and let $L_\bullet$ be a projective resolution of
$\Z_{{A}^{\op}}$ in the category $\prefab{{A}^{\op}}$. For any abelian presheaf
$X$ on $A$, there is a canonical natural isomorphism 
\[
\H{A}{X} \simeq X \odot_A L_\bullet
\]
in $\Hotab$.
\end{coro}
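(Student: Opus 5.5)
The plan is to identify $\H{A}{X}$, the homotopy colimit of $in_0\circ X$ over $A^{\op}$, with the left derived functor of $\limind\colon \prefab{A}\to\Ab$. Since $\limind X \simeq X\odot_A \Z_{A^{\op}}$ by the preceding lemma, Proposition~\ref{produittensorielderive} will then compute this derived functor by resolving in the second variable instead of the first.

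The first step is to express the homotopy colimit for the projective model structure on $\Homi(A^{\op},\Ch(\Ab))$. Concretely, $\H{A}{X}$ is represented by $\Tot(\limind M_\bullet)$, where $M_\bullet\to X[0]$ is a projectively cofibrant replacement of the diagram $X[0]$.

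The second step is to choose this replacement explicitly. Take a projective resolution $M_\bullet\xrightarrow{\epsilon}X$ by objects of $\Addinf(A)$, which exists by Proposition~\ref{prop:resAddinf}, and regard it as a chain complex of abelian presheaves, that is, as a diagram $A^{\op}\to\Ch(\Ab)$. Pointwise, $\epsilon$ is a quasi-isomorphism onto $X[0]$. It remains to see that $M_\bullet$ is projectively cofibrant. It is a bounded-below complex whose terms are sums of representables $\Wh{A}{a}$. Each inclusion of the $n$-skeleton into the $(n+1)$-skeleton is a pushout of a sum of generating cofibrations $\Wh{A}{a}\otimes(S^{n}\to D^{n+1})$, so $M_\bullet$ is a transfinite composition of such pushouts and hence cofibrant. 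Since $\limind$ is left Quillen for the projective structure, this gives
\[
\H{A}{X}\simeq \limind M_\bullet = M_\bullet\odot_A \Z_{A^{\op}} = L(-\odot_A\Z_{A^{\op}})(X)
\]
in $\Hotab$, where the middle equality is the lemma applied degreewise.

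The third step applies Proposition~\ref{produittensorielderive} with $Y=\Z_{A^{\op}}$: the zigzag of quasi-isomorphisms through $\Tot(M_\bullet\odot_A L_\bullet)$ yields $L(-\odot_A\Z_{A^{\op}})(X)\simeq X\odot_A L_\bullet$. That proposition is stated for resolutions by objects of $\Addinf(A^{\op})$, while here $L_\bullet$ is an arbitrary projective resolution. This causes no trouble: the proof only uses that each $-\odot_A L_j$ is exact, and projectives are retracts of objects of $\Addinf(A^{\op})$ by Proposition~\ref{prop:resAddinf}. Exactness is preserved under retracts and sums, so the argument goes through unchanged. Alternatively, one can compare any two projective resolutions by a chain homotopy equivalence.

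Naturality in $X$ follows from functoriality of projective resolutions up to homotopy. Independence of the choice of $L_\bullet$ follows from the uniqueness of projective resolutions up to chain homotopy, which makes the isomorphism canonical.

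The main obstacle is the first and second steps: making precise that the homotopy colimit used to define $\Hf{A}$ is computed by applying $\limind$ to a degreewise-projective resolution. The rest is formal once that identification is in place.
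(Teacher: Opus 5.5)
Your proposal is correct and follows the paper's route. You identify $\H{A}{X}$ with the left derived functor of $\limind \simeq -\odot_A \Z_{A^{\op}}$, then balance the derived tensor product using Proposition~\ref{produittensorielderive}. The paper takes the first identification as definitional, whereas you justify it: an $\Addinf(A)$-resolution is projectively cofibrant in $\Homi(A^{\op},\Ch(\Ab))$ and $\limind$ is left Quillen. You also rightly note, via retracts, that the balancing argument extends from $\Addinf(A^{\op})$-resolutions to arbitrary projective resolutions $L_\bullet$.
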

\begin{proof}
Since we defined the functor $\Hf{A}:\prefab{A}\to\Hotab$ to be the left derived
functor of the colimit functor (\ref{paragr:HomologyFunctor}), there is nothing
else to prove.
\end{proof}

\begin{defin}
An \emph{integrator} on $A$ is a projective resolution 
\[
L = \left( L_0 \xleftarrow{d_0} L_1 \xleftarrow{d_1} L_2 \xleftarrow{d_2} \cdots
\right) \mdvirg L
\xrightarrow{\epsilon} \Z_{{A}^{\op}}
\]
of the constant abelian presheaf $\Z_{{A}^{\op}}$ of value $\Z$ on ${A}^{\op}$.
This means that each $L_n$ is a projective presheaf on ${A}^{\op}$, and that
$\epsilon_0 : L_0 \to \Z$ is the cokernel of $d_0$.
\end{defin}

\paragr We say that $L$ is a \emph{free} integrator if it is an integrator whose
components $L_n$ are in $\Addinf({A}^{\op})$ for every integer $n \geq 0$. This
allows for very simple computations thanks to the following proposition.

\begin{prop}\label{prop:tensorProductAddRight}
If $X$ is an abelian presheaf on $A$ and if $L = \bigoplus_{i\in I}
\Wh{{A}^{\op}}{a_i}$ is an object of $\Addinf({A}^{\op})$, there is a natural
isomorphism 
\[
X \odot_A L \simeq \bigoplus_{i\in I} X(a_i)
\]
in $\Ab$.
\end{prop}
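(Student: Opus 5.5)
The plan is to reduce the claim to the case of a single representable and then pass to arbitrary sums using cocontinuity, exactly as in the proof of Proposition \ref{prop:tensorProductSymmetry}.

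First, the functor $X \odot_A -: \prefab{{A}^{\op}} \to \Ab$ is cocontinuous. As observed in the proof of Proposition \ref{prop:tensorProductSymmetry}, it is the composite of the equivalence $(-)_!^\ab$ of Proposition \ref{prop:prefabEqCat} with evaluation at $X$, and both of these functors are cocontinuous. In particular it commutes with arbitrary direct sums, so
\[
X \odot_A \Bigl(\bigoplus_{i\in I}\Wh{{A}^{\op}}{a_i}\Bigr) \simeq \bigoplus_{i\in I} X \odot_A \Wh{{A}^{\op}}{a_i} \pbox{.}
\]
Here the sum on the left is taken in $\prefab{{A}^{\op}}$. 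That sum is the sum in $\Addinf({A}^{\op})$ by the description of the latter as a full subcategory closed under sums.

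It then remains to identify each summand. By Proposition \ref{prop:tensorProductSymmetry}, there is a natural isomorphism
\[
X \odot_A \Wh{{A}^{\op}}{a} \simeq \Wh{{A}^{\op}}{a} \odot_{{A}^{\op}} X \pbox{.}
\]
Applying Proposition \ref{prop:tensorProductRepresentableLeft} to the category ${A}^{\op}$ identifies the right-hand side with $X(a)$, where $X$ is viewed as a presheaf on $({A}^{\op})^{\op}=A$. Alternatively, one can see this directly: $(-)_!^\ab$ applied to a representable $\Wh{{A}^{\op}}{a}$ is the functor $\Hom_{\prefab{A}}(\Wh{A}{a},-)$, by Proposition \ref{prop:leftKanExtAb} together with Yoneda. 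Evaluating at $X$ then gives $X(a)$ by the abelian Yoneda lemma.

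The only point requiring care is naturality: in $X$, and with respect to morphisms in $\Addinf({A}^{\op})$, which are matrices of morphisms of $A$. Both isomorphisms used above are natural, since they come from Yoneda and from the natural symmetry isomorphism. Consequently, a morphism $\Wh{{A}^{\op}}{a}\to\Wh{{A}^{\op}}{b}$, which corresponds to a map $b\to a$ in $A$, is sent to the restriction $X(a)\to X(b)$, as expected. No step appears to be a genuine obstacle. This is a bookkeeping consequence of cocontinuity and Yoneda.
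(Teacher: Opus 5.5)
Your proof is correct and is essentially the paper's argument, which combines the symmetry isomorphism of Proposition~\ref{prop:tensorProductSymmetry} with Proposition~\ref{prop:tensorProductRepresentableLeft} applied to ${A}^{\op}$. The only difference is that you spell out the cocontinuity step that splits the direct sum, which the paper leaves implicit. You do it in the second variable before applying symmetry; it could equally be done in the first variable of $-\odot_{{A}^{\op}}-$ afterwards.
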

\begin{proof}
We can use the symmetry proved in proposition \ref{prop:tensorProductSymmetry}
together with proposition \ref{prop:tensorProductRepresentableLeft}.
\end{proof}

\begin{remark}
Since $\Ch(\prefab{{A}^{\op}}) = \Homi(A,\Ch(\Ab))$, an integrator $L$ on a
small category $A$ can be seen either as a complex in $\Add({A}^{\op})$, or as a
functor $A \to \Ch(\Ab)$. We will use interchangeably the two points of view.
\end{remark}

\begin{example}\label{ex:IntegratorDelta}
Taking for $A$ the category $\Delta$, let
\[
L_\Delta = 
\left(
\Wh{\Delta^{\op}}{\Delta_0} 
\leftarrow \Wh{\Delta^{\op}}{\Delta_1} 
\leftarrow \Wh{\Delta^{\op}}{\Delta_2} 
\leftarrow \cdots
\right)
\]
be the complex in $\Add({\Delta}^{\op})$ whose differential is given by the
alternate sum of cofaces. 
For every integer $n\geq 0$, $\Wh{\Delta^{\op}}{\Delta_n}$ is the
abelian presheaf on ${\Delta}^{\op}$
\[
  \Wh{{\Delta}^{\op}}{\Delta_n}: \Delta_k
  \mapsto \\Z^{(\Hom_\Delta(\Delta_n,\Delta_k))} \pbox{,}
  \]
hence the direction of the arrows. 
Given any abelian simplicial group $X$, the complex $X \odot (L_\Delta)_\bullet$
is, by Proposition \ref{prop:tensorProductAddRight}, the non-normalized chain complex associated to $X$
\[
X \odot (L_\Delta)_\bullet \simeq
\left(
X_0 \leftarrow X_1 \leftarrow X_2 \leftarrow \cdots 
\right)
\]
with differential given by the alternative sum of faces.
We claim that the complex $L_\Delta$ is a free integrator on $\Delta$.
Proving this amounts to compute the homology of representables, which
is a classical exercise. Details can be found for example in
\cite[Example~2.5.7]{hubert2025phd}. Thus, we get another proof of the
fact that the non-normalized chain complex computes homology, as defined
in paragraph \ref{paragr:HomologyFunctor}. Since the normalized and
non-normalized chain complex functors are pointwise quasi-isomorphic, this also
gives another proof of the fact that $\dk : \prefab{\Delta}\to\Ch(\Ab)$ computes
the homology functor.
\end{example}

\begin{paragr}\label{paragr:BousfieldKanIntegrator}
Any small category $A$ can be equipped with a free integrator that we
call the \emph{Bousfield-Kan integrator}, defined as the composite
\[
\ell_A: A \hookrightarrow \pref{A} \xrightarrow{i_A} \Cat
\xrightarrow{\nerf} \pref{\Delta} \xrightarrow{\Whf{\Delta}} 
\prefab{\Delta} \xrightarrow{{L_{\Delta}}_!^\ab} \Ch(\Ab) \pbox{.}
\]
Indeed, for any integer $n\geq 0$, we have
  \[
{(\ell_A)}_n = \bigoplus_{\Delta_n
\xrightarrow{u}A}\Wh{{A}^{\op}}{\varphi n}: a \mapsto
\bigoplus_{\Delta_n \xrightarrow{u}A} \Z^{(\Hom_A(\varphi n,a))}
  \]
which shows that $\ell_A$ is a complex with components in
$\Add({A}^{\op})$. 
Moreover, given any object $a$ of~$A$, the category $\tranche{A}{a}$ is
aspherical, which means that the functor $\tranche{A}{a}\to e$ is in
$\W_\infty$. Applying the nerve and the functor ${L_\Delta}_!^\ab$ then gives a
natural quasi-isomorphism $\epsilon : \ell_A(a) \to
{L_\Delta}_!^\ab\Wh{{\Delta}^{\op}}{\nerf(e)}$. The codomain of this morphism is
the complex of abelian groups 
\[
{L_\Delta}_!^\ab\Wh{{\Delta}^{\op}}{\nerf(e)} =
\left(
\Z \xleftarrow{0} \Z \xleftarrow{\id} \Z \xleftarrow{0} \Z \xleftarrow{\id}
\cdots
\right) \pbox{,}
\]
which we equip with the quasi-isomorphism to $\Z[0]$ given by the
identity in degree $0$. Composing these morphisms, we get a natural
quasi-isomorphism $\ell_A(a) \to \Z[0]$ which is given by the augmentation
morphism $\ell_A(a)_0 \to \Z$.
This proves that $\ell_A$ is a free integrator on $A$.

Given any abelian presheaf $X$ on $A$, the complex $(\ell_A)^\ab_!(X)$ is
then the non-normalized complex of the simplicial replacement
\cite{bousfield1972homotopy} of $X$:
\[
(\ell_A)^\ab_!(X) \simeq 
\Big(
\cdots \leftarrow
\bigoplus_{\Delta_n \xrightarrow{u} A} Xu(n) 
\leftarrow
\bigoplus_{\Delta_{n+1} \xrightarrow{u} A} Xu(n+1) 
\leftarrow \cdots
\Big)
\]
whose differential is given on generators by 
\[
  d \langle \Delta_{n+1} \xrightarrow{u} A, x\rangle = 
  \sum_{0 \leq i \leq n+1} (-1)^i \langle \Delta_{n}
  \xrightarrow{\delta_i} \Delta_{n+1} \xrightarrow{u} A, d_i(x) \rangle
\]
where we denoted by $\delta_i$ the $i$-th coface maps and by $d_i$ its image
by $X$. Using this integrator amounts to use the classical
Bousfield-Kan formula given in \cite[XII, 5.5]{bousfield1972homotopy}.
\end{paragr}

A detailed study of integrators is not needed for the purpose of this
paper, since we will mainly (but not exclusively) use the Bousfield-Kan
integrator. More details are available in \cite{hubert2025phd} where we give,
for example, integrators on the cubical category $\cub$ and the globular category
$\G$. We also use this notion to prove several homotopical Dold-Kan
correspondences for categories that do not satisfy the strong Whitehead
condition mentioned in the introduction.

\section{Aspherical morphisms for homology}
\label{sec:asphericalMorphisms}

It is fairly easy to see from Proposition
\ref{prop:MorphismesAspheriquesLocFond} that if $u : A \to B$ is an aspherical
functor, the restriction functor $u^* : \pref{B} \to \pref{A}$ induces an
isomorphism in homology for presheaves of sets. In this section, we show that it
is enough to ask for $u$ to be aspherical with respect to the class of homology
isomorphisms in $\Cat$, and that the restriction functor also induces an
isomorphism in homology for arbitrary abelian presheaves.

\begin{paragr}
We denote by $\Wab_\infty$ the class of functors in $\Cat$ inducing an
isomorphism in homology. More precisely, a functor $u: A \to B$
between small categories is an element of $\Wab_\infty$ if and only if
the induced morphism 
\[
\H{A}{\Z} \xrightarrow{\H{u}{\Z}} \H{B}{\Z} 
\]
is an isomorphism in $\Hotab$. Using the standard way of computing
homology of categories with the simplicial nerve functor, we get the following
equivalence: 
\[
u \in \Wab_\infty \iff \dk\Wh{\Delta}{\nerf(u)} \text{ is a quasi-isomorphism.}
\]
\end{paragr}

\begin{prop}\label{prop:asphericalAbAspherical}
The class $\Wab_\infty$ is a basic localizer. Moreover, we have the
inclusion $\W_\infty \subset \Wab_{\infty}$, and every aspherical
functor is $\W_\infty^\ab$-aspherical.
\end{prop}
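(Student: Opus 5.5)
The plan is to verify the axioms of a basic localizer for $\Wab_\infty$ one by one, using the characterization $u \in \Wab_\infty \iff \dk\Wh{\Delta}{\nerf(u)}$ is a quasi-isomorphism. That is, $\Wab_\infty$ is the preimage of the class of quasi-isomorphisms under the functor $\Phi = \dk\Whf{\Delta}\nerf : \Cat \to \Ch(\Ab)$.

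\textbf{Weak saturation.} Identities and two-out-of-three are inherited from quasi-isomorphisms, since $\Phi$ is a functor. For the retract condition (3), suppose $ri=\id_{X'}$ and $\Phi(ir)$ is a quasi-isomorphism. Pass to homology groups, $H_n\Phi(i)H_n\Phi(r)$ is an isomorphism and $H_n\Phi(r)H_n\Phi(i)=\id$. Then $H_n\Phi(r)$ is surjective because it has a section, and injective because $H_n\Phi(i)H_n\Phi(r)$ is injective. So $\Phi(r)$ is a quasi-isomorphism.

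\textbf{Inclusion and axiom (4).} By definition, $\Whf{\Delta}$ followed by $\dk$ sends simplicial weak equivalences to quasi-isomorphisms (recalled before Proposition \ref{i_Ahocolim}). Hence $\W_\infty\subset\Wab_\infty$. Axiom (4) then follows: for $A$ with a final object, $A\to e$ lies in $\W_\infty$, hence in $\Wab_\infty$.

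\textbf{Axiom (5), the main obstacle.} Suppose $\tranche{u}{b}\in\Wab_\infty$ for every $b$, with $u:A\to B$. I would express $\H{A}{\Z}$ as a homotopy colimit over $B$. By Proposition \ref{i_Ahocolim} (Thomason), applied to the presheaf on $B$ (or $B^{\op}$ conventions adjusted) sending $b$ to $\tranche{A}{b}$, we get $\nerf A \simeq \hocolim_{b}\nerf(\tranche{A}{b})$. More precisely, I would use Thomason's homotopy colimit theorem for the Grothendieck construction: $A$ maps to the Grothendieck construction of $b\mapsto \tranche{A}{b}$ by a Thomason equivalence. Similarly $B\simeq \hocolim_b \nerf(\tranche{B}{b})$, compatibly with $u$.

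Applying the left Quillen functor $\dk\Whf{\Delta}$, which commutes with homotopy colimits as in the proof of Proposition \ref{prop:colimiteHomotopiqueLibres}, gives
\[
\H{A}{\Z}\simeq \hocolim_b \H{\tranche{A}{b}}{\Z}.
\]
The same holds for $B$, and the map $\H{u}{\Z}$ is the homotopy colimit of the maps $\H{\tranche{u}{b}}{\Z}$. These maps are pointwise quasi-isomorphisms by hypothesis, so the induced map on homotopy colimits in $\Ch(\Ab)$ is a quasi-isomorphism. The delicate point is making the naturality in $b$ precise at the level of diagrams, not just objectwise. I would handle it by working with the explicit Bousfield–Kan / bisimplicial model. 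The bisimplicial set $(n,m)\mapsto \coprod_{b_0\to\cdots\to b_n}\nerf_m(\tranche{A}{b_n})$ has diagonal weakly equivalent to $\nerf A$. After abelianization, the map for $u$ is levelwise in $n$ a sum of quasi-isomorphisms, so the spectral sequence of the double complex (or the realization lemma for bisimplicial abelian groups) concludes.

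\textbf{Asphericity.} The last claim is immediate from the definitions. If $u$ is aspherical, each $\tranche{u}{b}$ lies in $\W_\infty\subset\Wab_\infty$, so $u$ is $\Wab_\infty$-aspherical.
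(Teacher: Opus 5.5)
Your proof is correct, but it takes a different route from the paper. The paper handles the whole statement in one line. It observes that $\Wab_\infty$ is the class of equivalences of the derivator associated with $(\Ch(\Ab),\W_{\mathrm{qis}})$, and invokes Maltsiniotis' general result that such a class is always a basic localizer. The inclusion $\W_\infty\subset\Wab_\infty$ is read off either from the simplicial description, as you do, or from Cisinski's theorem that $\W_\infty$ is the minimal basic localizer. The last claim then follows at once.

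You instead check the axioms by hand. The only substantial one is axiom (5), for which you essentially rerun Quillen's proof of Theorem A with homology in place of homotopy type. In derivator language, your homotopy-colimit decomposition is exactly axiom Der 4 (the fibre at $b$ of $u_!$ is the homotopy colimit over $\tranche{A}{b}$), combined with conservativity. That is why the paper's argument needs no explicit bookkeeping of naturality in $b$, whereas you must supply it through a bisimplicial model.

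One small correction in that model. The functor $b\mapsto\tranche{A}{b}$ is covariant, so with chains $b_0\to\cdots\to b_n$ the summand must be $\nerf_m(\tranche{A}{b_0})$, not $\nerf_m(\tranche{A}{b_n})$; otherwise the last face map is undefined. With this fix you get Quillen's bisimplicial set of pairs $(a_0\to\cdots\to a_m,\ u(a_m)\to b_0\to\cdots\to b_n)$. Its diagonal maps to $\nerf A$ by a weak equivalence, by the realization lemma in the other direction. For fixed $m$, the $n$-direction is a disjoint union of nerves of the categories $u(a_m)\backslash B$, and each of these has an initial object.

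As for what each approach buys: yours is elementary and self-contained, using only the realization lemma, Eilenberg--Zilber--Dold--Puppe and a first-quadrant spectral sequence. The paper's is much shorter and works verbatim for any derivator, for example other coefficient rings. Through Cisinski's minimality it also avoids simplicial methods, in keeping with the paper's aim of not giving $\Delta$ a preeminent role.
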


\begin{proof}
This follows (see \cite[Corollary
1.5.14]{maltsiniotis2001derivateurs}) from the fact that $\Wab_\infty$
is the class of equivalences for the derivator induced by the
localizer $(\Ch(\Ab), \W_{qis})$ where $\W_{qis}$ is the class of
quasi-isomorphisms of non-negative chain complexes of abelian groups.
More details (in French) can be found in
\cite[annexe~A]{hubert2025phd}. The inclusion $\W_\infty \subset
\Wab_\infty$ can be seen directly from the equivalence above, or,
without using the simplicial description, by the minimality of
$\W_\infty$ among basic localizers \cite{cisinski2004localisateur}.
\end{proof}

\begin{paragr}
Let $u : A \to B$ be a functor between small categories. Recall that the functor
$u^* : \pref{B} \to \pref{A}$ is a right adjoint. In particular, it preserves
abelian group objects and induces a functor that we denote
\[
(u^*)^\ab : \prefab{B} \to \prefab{A} \pbox{.}
\]
The functor $u^*$ also admits a right adjoint $u_* : \pref{A} \to \pref{B}$,
which, by the same argument, induces a functor
\[
u_*^\ab : \prefab{A} \to \prefab{B}
\]
which is right adjoint to the functor $(u^*)^\ab$.
The two following squares are then commutative up to a natural isomorphism:
\begin{equation}
\label{diag:invImageCommutationsOubli}
\xymatrix{
\prefab{B} \ar[d]_{\U} \ar[r]^{(u^*)^\ab} &
\prefab{A} \ar[d]^{\U} \\
\pref{B} \ar[r]_{u^*} & \pref{A} \pbox{,}
} 
\qquad
\xymatrix{
\prefab{A} 
\ar[r]^{u_*^\ab}  
\ar[d]_{\U}  
& 
\prefab{B} 
\ar[d]^{\U} 
\\
\pref{A}  
\ar[r]_{u_*}
& \pref{B} \pbox{.}
}
\end{equation}

The left adjoint to $u^*$, however, does not preserve abelian group objects
in general. But since $\prefab{B}$ is a cocomplete additive category, Proposition
\ref{prop:leftKanExtAb} allows to extend the functor $A \xrightarrow{u} B
\xrightarrow{\Whf{B}} \prefab{B}$ to a cocontinuous functor
\[
  (\Whf{B}\circ u)_!^\ab : \prefab{A} \to \prefab{B}
\]
that we will simply denote by $u_!^\ab$. Proposition \ref{prop:leftKanExtAb}
guarantees that this functor is a left adjoint to the functor
\[
( (\Whf{B}\circ u)^*)^\ab : \prefab{B} \to \prefab{A} \mdvirg 
Y \mapsto \left( a \mapsto \Hom_{\prefab{B}}(\Wh{B}{u(a)},Y) \right)
\]
which is nothing but the functor $(u^*)^\ab$. 
Commutativity of the squares in diagram \ref{diag:invImageCommutationsOubli}
imply that the following two squares 
\begin{equation}
\label{diag:invImageCommutationWhitehead}
\xymatrix{
\pref{B} \ar[d]_{\Whf{B}} \ar[r]^{u^*} & \pref{A} \ar[d]^{\Whf{A}} 
\\
\prefab{B} \ar[r]_{(u^*)^\ab} & \prefab{A} \pbox{,}
}
\qquad
\xymatrix{
\pref{A} \ar[d]_{\Whf{A}} \ar[r]^{u_!} & \pref{B} \ar[d]^{\Whf{B}} 
\\
\prefab{A} \ar[r]_{u_!^\ab} & \prefab{B} 
}
\end{equation}
are also commutative up to a natural isomorphism.
\end{paragr}

\begin{paragr}
Let $A$ and $B$ be two small categories, and $u : A \to B$ a
$\Wab_\infty$-aspherical functor. Recall from Proposition
\ref{prop:MorphismesAspheriquesLocFond} that the functor 
\[
\tranche{u}{X}:\tranche{A}{X}\to \tranche{B}{X} \mdvirg (a, u(a)\to X) \mapsto
(u(a),u(a)\to X)
\]
is then in $\Wab_\infty$. Thus, taking homology with integer coefficients, the morphism
\[
  \H{\tranche{u}{X}}{\Z} : \H{\tranche{A}{X}}{\Z} \to \H{\tranche{B}{X}}{\Z}
\]
is an isomorphism in $\Hotab$. Using Remark
\ref{remark:homologieTrancheetLibres} together with the commutativity of diagram
\ref{diag:invImageCommutationWhitehead}, this means that there is a natural
isomorphism 
\[
\H{A}{(u^*)^\ab\Wh{B}{X}} \simeq \H{B}{\Wh{B}{X}}
\]
in $\Hotab$. In other words, $(u^*)^\ab$ induces an isomorphism in homology for
\emph{free} abelian presheaves. We will now show that this still holds for
arbitrary abelian presheaves.
\end{paragr}

\begin{paragr}
For any object $b$ of $B$, we denote by $\ell_A^B: B \to \Ch(\Ab)$
the functor making the following diagram 
\[
\xymatrix{
B \ar[r]^{\Whf{B}} \ar[rd]_{\ell_A^B} 
& \prefab{B} \ar[r]^{(u^*)^\ab} 
& \prefab{A} \ar[ld]^{(\ell_A)_!^\ab} 
\\
& \Ch(\Ab)
} 
\]
commutative, where $\ell_A$ denotes the Bousfield-Kan integrator introduced in
paragraph \ref{paragr:BousfieldKanIntegrator}. Concretely, for any object $b$ of
$B$, $\ell_A^B(b)$ is the complex which is given in degree~$n\geq0$ by 
\[
(u^*)^\ab\Wh{B}{b} \odot_A (\ell_A)_n = 
\bigoplus_{\Delta_n \xrightarrow{\varphi}A}\Wh{}{\Hom_B(u\varphi(n),b)} \pbox{.}
\]
In particular, $\ell_A^B$ is a complex in $\Add({B}^{\op})$. Moreover, we get
the equality
\[
  \ell_A^B(b) = (L_\Delta)_!^\ab\Wh{\Delta}{\nerf(\tranche{A}{b})} \pbox{.}
\]
Therefore, applying the functor
\[
\Cat \xrightarrow{\nerf} \pref{\Delta} \xrightarrow{\Whf{\Delta}}
\prefab{\Delta}
\xrightarrow{(L_\Delta)_!^\ab} \Ch(\Ab) 
\]
to $\tranche{u}{b}: \tranche{A}{b}\to\tranche{B}{b}$ gives a morphism of chain complexes $\lambda_{u,b}:
\ell_A^B(b)
\to \ell_B(b)$ which is natural in~$b$, since for any morphism $f: b \to b'$ in
$B$, the square 
\[
\xymatrix{
\tranche{A}{b} \ar[r]^{\tranche{u}{b}} 
\ar[d]_{\tranche{A}{f}} 
&
\tranche{B}{b} \ar[d]^{\tranche{B}{f}} 
\\
\tranche{A}{b'} \ar[r]_{\tranche{u}{b'}} 
&
\tranche{B}{b'}
} 
\]
is commutative. Given any abelian presheaf $X$ on $B$, we can then
define a morphism
\[
  \lambda_{u,X} = X \odot_B \lambda_u: X \odot_B \ell_A^B \to X
  \odot_B \ell_B \mdvirg
\]
which fits in the following diagram:
\begin{equation}\label{diag:morphismeWAspheriqueCommutationLambda}
\xymatrix{
{\prefab{B}} 
\ar[rr]^{(u^*)^\ab} 
\ar[rd]_{(\ell_B)_!^\ab}^{}="c"
&& 
{\prefab{A}} 
\ar@{}"c"|(.4){}="d"|(.9){}="e"
\ar@2"d";"e"_{\lambda_u}
\ar[ld]^{(\ell_A)_!^\ab} \\
& {\Ch(\Ab)} 
} 
\end{equation}
and which is described in the following way: for any abelian presheaf
$X$ on $B$ and any integer~$n\geq 0$, we have 
\[
X \odot_B (\ell_A^B)_n = 
\bigoplus_{\Delta_n \xrightarrow{\varphi}A}X(u\varphi(n))
\mdvirg
X \odot_B (\ell_B)_n = 
\bigoplus_{\Delta_n \xrightarrow{\varphi}B}X(\varphi(n)) \mdvirg
\]
and $(\lambda_{u,X})_n$ is given on generators by the formula
$(\lambda_{u,X})_n \langle \varphi, x \rangle = 
\langle u \circ \varphi , x \rangle$.
\end{paragr}

\begin{prop}
Let $u: A \to B$ be a functor between small categories. The following
conditions are equivalent: 
\begin{enumerate}
\item $u$ is $\Wab_\infty$-aspherical;
\item for any object $b$ in $B$, the morphism $\lambda_{u,b}$ is a
quasi-isomorphism;
\item for any abelian presheaf $X$ on $B$, the morphism
$\lambda_{u,X}$ is a quasi-isomorphism;
\item the diagram 
\[
\xymatrix{
\prefab{B} \ar[rr]^{(u^*)^\ab} \ar[rd]_{\Hf{B}} && \prefab{A} \ar[ld]^{\Hf{A}}
\\
& \Hotab
} 
\]
is commutative up to a natural isomorphism.
\end{enumerate}
Moreover, these conditions imply the following condition 
\begin{enumerate}[resume]
\item for any morphism $f$ in $\prefab{B}$, we have the equivalence 
\[
f \in \Wab_B \iff (u^*)^\ab f \in \Wab_A \pbox{.}
\]
\end{enumerate}
\end{prop}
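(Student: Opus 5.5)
We prove $(1)\Leftrightarrow(2)$, $(2)\Rightarrow(3)\Rightarrow(4)$, $(4)\Rightarrow(2)$, and finally $(4)\Rightarrow(5)$.

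\emph{$(1)\Leftrightarrow(2)$.} By definition, $u$ is $\Wab_\infty$-aspherical if and only if each $\tranche{u}{b}$ lies in $\Wab_\infty$. That is, $\dk\Wh{\Delta}{\nerf(\tranche{u}{b})}$ must be a quasi-isomorphism. Since the normalized and non-normalized complexes are naturally quasi-isomorphic, this is equivalent to $(L_\Delta)_!^\ab\Wh{\Delta}{\nerf(\tranche{u}{b})}$ being a quasi-isomorphism. By the identifications $\ell_A^B(b)=(L_\Delta)_!^\ab\Wh{\Delta}{\nerf(\tranche{A}{b})}$ and $\ell_B(b)=(L_\Delta)_!^\ab\Wh{\Delta}{\nerf(\tranche{B}{b})}$, that morphism is exactly $\lambda_{u,b}$.

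\emph{$(2)\Rightarrow(3)$.} This is the main step. Both $\ell_A^B$ and $\ell_B$ are complexes in $\Addinf(B^{\op})$, hence complexes of projective abelian presheaves on $B^{\op}$. Since $\tranche{B}{b}$ is aspherical, $\ell_B$ is a resolution of $\Z_{B^{\op}}$. Under (2), $\ell_A^B$ is therefore also a projective resolution of $\Z_{B^{\op}}$, so it is an integrator on $B$, and $\lambda_u$ is a quasi-isomorphism between two bounded-below complexes of projectives. Such a quasi-isomorphism is a chain homotopy equivalence. Applying the additive functor $X\odot_B-$ preserves chain homotopy equivalences, so $\lambda_{u,X}=X\odot_B\lambda_u$ is a quasi-isomorphism. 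Equivalently, one can invoke the balancing argument of Proposition~\ref{produittensorielderive}: both complexes compute $L(X\odot_B-)(\Z_{B^{\op}})$. The delicate point is checking that $\lambda_u$ is compatible with the augmentations to $\Z$. This holds because both augmentations come from $\tranche{A}{b}\to e$ and $\tranche{B}{b}\to e$, and these are compatible with $\tranche{u}{b}$.

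\emph{$(3)\Rightarrow(4)$.} Because $\ell_A$ is a free integrator on $A$, the corollary computing $\H{A}{-}$ via integrators gives $\H{A}{(u^*)^\ab X}\simeq(\ell_A)_!^\ab(u^*)^\ab X$. By definition of $\ell_A^B$ and cocontinuity (Proposition~\ref{prop:prefabEqCat}), the latter is $X\odot_B\ell_A^B$. Likewise, $\H{B}{X}\simeq X\odot_B\ell_B$. Under (3), $\lambda_{u,X}$ is a quasi-isomorphism, and it is natural in $X$. It therefore provides the required natural isomorphism in $\Hotab$, namely the 2-cell of diagram~\ref{diag:morphismeWAspheriqueCommutationLambda} becoming invertible.

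\emph{$(4)\Rightarrow(2)$.} Take $X=\Wh{B}{b}$. By the computation above, $\H{A}{(u^*)^\ab\Wh{B}{b}}\simeq\ell_A^B(b)$ and $\H{B}{\Wh{B}{b}}\simeq\Z[0]$. So (4) gives only an abstract isomorphism $\ell_A^B(b)\simeq\Z[0]$, and a priori not that $\lambda_{u,b}$ itself is one. To conclude, we must know that the natural isomorphism in (4) is, or can be replaced by, the canonical comparison $\lambda_u$. One option is to interpret (4) as asserting that the canonical map is an isomorphism. Alternatively, we argue directly: by Remark~\ref{remark:homologieTrancheetLibres}, $\H{A}{u^*\Wh{B}{b}}\simeq\H{\tranche{A}{b}}{\Z}$. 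An abstract isomorphism $\H{\tranche{A}{b}}{\Z}\simeq\Z[0]$ then already forces $\tranche{A}{b}\to e$, and hence $\tranche{u}{b}$ by two-out-of-three, to lie in $\Wab_\infty$. Indeed, a non-negative complex whose homology is $\Z$ in degree~$0$, with $H_0$ identified with $\Z$ via the augmentation of connected components, maps quasi-isomorphically to $\Z[0]$. I expect this identification of the augmentation to be a minor obstacle.

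\emph{$(4)\Rightarrow(5)$.} Given (4), $\Hf{B}(f)$ is an isomorphism if and only if $\Hf{A}((u^*)^\ab f)$ is one, which is exactly the equivalence $f\in\Wab_B\iff(u^*)^\ab f\in\Wab_A$.
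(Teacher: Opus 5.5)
Your proposal is correct and follows the paper's proof essentially step for step, including the return implication through $\tranche{A}{b}$ via Remark~\ref{remark:homologieTrancheetLibres} and $\H{\tranche{B}{b}}{\Z}\simeq\Z[0]$ (the paper's $(4)\Rightarrow(1)$); your homotopy-equivalence justification of $(2)\Rightarrow(3)$ is the precise form of the paper's remark about quasi-isomorphisms between complexes of projectives. The ``minor obstacle'' you flag closes immediately: $H_0$ of $\H{C}{\Z}$ is the free abelian group on $\pi_0(C)$, so an abstract isomorphism $\H{C}{\Z}\simeq\Z[0]$ forces $C$ to be connected with vanishing higher homology, and then the augmentation is an isomorphism, i.e.\ $C\to e$ lies in $\Wab_\infty$.
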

\begin{proof}
The implication $(4) \implies (5)$ is immediate. To prove
$(1)\implies(2)$, recall that if $u$ is $\Wab_\infty$\nobreakdash-aspherical,
Proposition \ref{prop:MorphismesAspheriquesLocFond} implies that for
any object $b$ in $B$, the morphism $\tranche{u}{b}$ is in
$\Wab_\infty$. By construction, $\lambda_{u,b}$ is then a
quasi-isomorphism. 
For the implication $(2)\implies(3)$, recall that $\ell_A^B$ is a
complex in $\Add({B}^{\op})$
which implies, with the hypothesis, that $\lambda_{u}:
\ell_A^B\to\ell_B$ is a quasi-isomorphisms between complexes of projective
abelian presheaves on ${B}^{\op}$. Since for any abelian
presheaf $X$ on $B$, the functor $X \odot_B -$ is a left adjoint, it
is right exact, hence preserves quasi-isomorphisms between
complexes of projective objects. Therefore, $\lambda_{u,X} = X \odot_B
\lambda_u$
is a quasi-isomorphism. Implication $(3) \implies (4)$ comes from the
fact that $\ell_B$ is an integrator on $B$, so that with this
hypothesis, $\ell_A^B$ is also an integrator on $B$. Finally, to prove
$(4)\implies(1)$, we use Proposition
\ref{prop:MorphismesAspheriquesLocFond} and prove that for any object
$b$ of $B$, the category $\tranche{A}{b}$ is $\Wab_\infty$-aspherical:
this comes from the isomorphisms 
\[
\H{\tranche{A}{b}}{\Z} \simeq \H{A}{\Wh{A}{u^*(b)}} \simeq
\H{B}{\Wh{B}{b}} \simeq \H{\tranche{B}{b}}{\Z}
\]
in $\Hotab$, and from the fact that $\tranche{B}{b}$ is
a $\Wab_\infty$-aspherical category. 
\end{proof}

\section{Strong Whitehead categories}
\label{sec:whitehead}

There are two natural candidates to the role of weak equivalences of
abelian presheaves on a small category: morphisms inducing isomorphisms in homology, and morphisms
whose underlying morphism of presheaves of sets is a weak equivalence. We show
in this section that any test category for which these two classes coincide is a
homologically pseudo-test category, as defined in~Definition
\ref{def:homPseudoTest}.

\begin{defin}\label{def:Whitehead}
We say that a small category $A$ is a \emph{strong Whitehead} category if the
two classes of morphisms~$\Wab_A$ and $\U^{-1}\W_A$ in $\prefab{A}$
coincide, where $\U: \prefab{A} \to \pref{A}$ denotes the forgetful
functor. This means that for any morphism $f$ of abelian presheaves on $A$, we have the
equivalence 
\[
f \in \Wab_A \iff \U(f) \in \W_A \pbox{.}
\]

\end{defin} 
\begin{example}\label{ex:DeltaWhitehead}
Recall
that for any morphism of simplicial abelian groups $f: X \to Y$ and
any integer $n>0$, there is a commutative square 
\[
\xymatrix{
{\pi_n(\U X, 0)} \ar[r]^{\pi_n(\U f)} \ar[d]_{\simeq}  & {\pi_n(\U Y, 0)}
\ar[d]^{\simeq}  \\
\mathsf{H}_n(\Delta,\Z^{(X)}) \ar[r]_{\mathsf{H}_n(f)} &
\mathsf{H}_n(\Delta,\Z^{(Y)})
}
\]
and similarly in degree $0$. This shows that $f$ is in $\Wab_A$ if
and only if $\U f$ induces an isomorphism between homotopy groups.
Illusie-Quillen Theorem \cite[VI, Theorem 3.3]{quillenktheory} then
allows to conclude that $\Delta$ is a strong Whitehead category.
\end{example}

\begin{prop}\label{prop:aspheriqueSourceWhitehead}
Let $u: A \to B$ be an aspherical functor. If $A$ is a strong Whitehead
category, then $B$ is also a strong Whitehead category.
\end{prop}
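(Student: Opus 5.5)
Let $f : X \to Y$ be a morphism of abelian presheaves on $B$. The plan is to transport the question along $(u^*)^\ab$ and use the strong Whitehead condition on $A$. We must show that
\[
f \in \Wab_B \iff \U(f) \in \W_B \pbox{.}
\]
Two earlier results make $(u^*)^\ab$ reflect and preserve each class separately. Since $u$ is aspherical, Proposition~\ref{prop:asphericalAbAspherical} shows that $u$ is also $\Wab_\infty$-aspherical. Condition $(5)$ of the last proposition of Section~\ref{sec:asphericalMorphisms} then gives $f \in \Wab_B \iff (u^*)^\ab f \in \Wab_A$. On the set-theoretic side, condition $(5)$ of Proposition~\ref{prop:MorphismesAspheriquesLocFond}, applied to $\W=\W_\infty$, gives $\U(f) \in \W_B \iff u^*(\U f) \in \W_A$.

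The steps, in order, are as follows. First, using the left square of diagram~\ref{diag:invImageCommutationsOubli}, we have $u^*\U(f) \simeq \U((u^*)^\ab f)$, so
\[
\U(f)\in\W_B \iff \U\big((u^*)^\ab f\big) \in \W_A \pbox{.}
\]
We should note that $\W_A$ is closed under isomorphism of arrows, which holds because $\W_\infty$ is. Second, we apply the strong Whitehead hypothesis on $A$ to the morphism $(u^*)^\ab f$ of $\prefab{A}$:
\[
\U\big((u^*)^\ab f\big)\in\W_A \iff (u^*)^\ab f \in \Wab_A \pbox{.}
\]
Third, we combine this with the homological equivalence above to get $(u^*)^\ab f \in \Wab_A \iff f\in\Wab_B$. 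Chaining the three equivalences proves the proposition.

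I do not expect a serious obstacle, since every ingredient is already in place. The one point to check carefully is that $\Wab_\infty$-asphericity really yields condition $(5)$ of the previous proposition. This goes through the implications $(1)\Rightarrow(4)\Rightarrow(5)$ proved there, and it relies on $\Wab_\infty$ being a basic localizer containing $\W_\infty$, so that ordinary asphericity implies $\Wab_\infty$-asphericity. That is exactly the content of Proposition~\ref{prop:asphericalAbAspherical}.
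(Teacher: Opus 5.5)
Your proposal is correct and is essentially the paper's proof. The paper uses the same chain of equivalences (via $\Wab_\infty$-asphericity of $u$, the strong Whitehead condition on $A$, the commutation $\U(u^*)^\ab\simeq u^*\U$, and condition $(5)$ of Proposition~\ref{prop:MorphismesAspheriquesLocFond}), only written starting from $f\in\Wab_B$ rather than from $\U(f)\in\W_B$.
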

\begin{proof}
If $u$ is aspherical, then it is also $\Wab_\infty$-aspherical by Proposition
\ref{prop:asphericalAbAspherical}, and we get the sequence of equivalences 
\begin{align*}
f \in \Wab_B &\iff (u^*)^\ab f \in \Wab_A
& (\text{$\Wab_\infty$-asphericity}) \\
&\iff \U (u^*)^\ab f \in \W_A 
& (\text{strong Whitehead condition})\\
&\iff u^* \U f \in \W_A 
& (\text{commutativity of diagram \ref{diag:invImageCommutationsOubli}})\\ 
&\iff \U f \in \W_B &\text{(asphericity)}
\end{align*}
which shows that $B$ is a strong Whitehead category.
\end{proof}

\begin{example}
\label{ex:WhiteheadCategories}
Since $\Delta$ is totally aspherical \cite[Proposition
1.6.14]{maltsiniotis2005}, the diagonal functor $\Delta \to \Delta\times\Delta$
is aspherical. This implies that for any integer $n > 0$, the category
$\Delta^n$ is also a strong Whitehead category. We will see in Proposition
\ref{prop:m_nAspherical} that there exists an aspherical functor
$\Delta^n \to \Theta_n$, ensuring that $\Theta_n$ is also a strong Whitehead
category for any integer $n>0$. \end{example}

\begin{paragr}
If $\M$ is a locally presentable category and $I$ is a class of
morphisms in $\M$, we denote by~$l(I)$ (resp.~$r(I)$) the class of
morphisms satisfying the left (resp.~right) lifting property with
respect to~$I$. We recall the following theorem:
\end{paragr}

\begin{theorem}[Crans]\label{th:CransTransfert}
Let $(\M, \W_\M, \textit{Cof}_\M,\textit{Fib}_\M)$ be a closed model
category cofibrantly generated by the pair $(I,J)$. Let $\M'$ be a
locally presentable category and $G: \M \to \M'$ a functor admitting
a right adjoint $D$. Suppose that 
$D(l(r(GJ)))\subset \W_\M$. 
Then $\M'$ admits a closed model category structure generated by the
pair $(GI,GJ)$, whose weak equivalences are the elements of
$D^{-1}(\W_\M)$ and fibrations are the elements of
$D^{-1}(\textit{Fib}_\M)$.
\end{theorem}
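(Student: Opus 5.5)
The plan is the standard transfer argument: build everything from the small object argument in $\M'$ and use the adjunction $G \dashv D$ to translate lifting properties. On $\M'$ I take as weak equivalences $\W' = D^{-1}(\W_\M)$ and as fibrations $\textit{Fib}' = D^{-1}(\textit{Fib}_\M)$. The cofibrations are defined as $\textit{Cof}' = l(r(GI))$. The first step is the adjunction identity: a morphism $p$ of $\M'$ has the right lifting property with respect to $G(i)$ if and only if $D(p)$ has it with respect to $i$. This gives
\[
r(GI) = D^{-1}(r(I)) = D^{-1}(\textit{Fib}_\M\cap\W_\M) = \textit{Fib}'\cap\W' \mdvirg \qquad r(GJ) = D^{-1}(r(J)) = \textit{Fib}' \pbox{.}
\]
Since $r(I)\subset r(J)$ in $\M$, we get $r(GI)\subset r(GJ)$, hence $l(r(GJ))\subset l(r(GI)) = \textit{Cof}'$.

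The limit and colimit axioms hold because $\M'$ is locally presentable. Also, $\W'$ satisfies two-out-of-three and is stable under retracts, since $\W_\M$ is and $D$ is a functor; fibrations and cofibrations, being defined by lifting properties, are stable under retracts. For factorizations, every object of a locally presentable category is small, so the small object argument applies to $GI$ and to $GJ$. It factors any morphism as an element of $l(r(GI))=\textit{Cof}'$ followed by an element of $r(GI)=\textit{Fib}'\cap\W'$. It also factors any morphism as an element of $l(r(GJ))$ followed by an element of $r(GJ)=\textit{Fib}'$. For the second factorization to be the required (trivial cofibration, fibration) factorization, we need $l(r(GJ))\subset \textit{Cof}'\cap\W'$. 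The inclusion into $\textit{Cof}'$ was shown above. The inclusion into $\W'$ is exactly the hypothesis $D(l(r(GJ)))\subset\W_\M$. This is the one non-formal point, and in the theorem it is supplied as a hypothesis.

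It remains to check the lifting axioms. Cofibrations lift against trivial fibrations by definition of $\textit{Cof}'$, since $\textit{Fib}'\cap\W' = r(GI)$. For trivial cofibrations against fibrations, I show $\textit{Cof}'\cap\W' = l(r(GJ))$ with the retract argument. Take $f\in\textit{Cof}'\cap\W'$ and factor it as $f = p\,i$ with $i\in l(r(GJ))$ and $p\in r(GJ)=\textit{Fib}'$. Then $i\in\W'$ by the hypothesis, so $p\in\W'$ by two-out-of-three, hence $p\in r(GI)$. Since $f$ is a cofibration, it lifts against $p$, which exhibits $f$ as a retract of $i$, so $f\in l(r(GJ))$. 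Together with the inclusion already shown, the trivial cofibrations are exactly $l(r(GJ))$, and they lift against $r(GJ)=\textit{Fib}'$.

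This gives a model structure on $\M'$ whose cofibrations are $l(r(GI))$ and trivial cofibrations are $l(r(GJ))$, which is the sense in which it is cofibrantly generated by $(GI,GJ)$. Its weak equivalences and fibrations are $D^{-1}(\W_\M)$ and $D^{-1}(\textit{Fib}_\M)$, as stated.
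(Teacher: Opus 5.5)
Your proof is correct and is the standard transfer argument: the adjunction identifies $r(GI)$ and $r(GJ)$ with the trivial fibrations and fibrations, the small object argument in the locally presentable category $\M'$ gives both factorizations, and the retract argument identifies $l(r(GJ))$ with the trivial cofibrations. The paper gives no proof of its own and simply cites Crans's Theorem 3.3, whose proof is essentially this argument.
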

\begin{proof}
See \cite[Theorem 3.3]{Crans1995Quillen}.
\end{proof}

In what follows, we will use Theorem \ref{th:CransTransfert} to transfer the
Grothendieck-Cisinski model structure~(\ref{thm:modelStructureCisinski}) on
$\pref{A}$ to the category $\prefab{A}$ in the case where $A$ is both a strong
Whitehead and a local test category.

\begin{prop}\label{prop:intLibreMonos}
Let $A$ be a small category and $i: X \hookrightarrow Y$ a monomorphism of
presheaves on~$A$. If~$L: A \to \Ch(\Ab)$ is a free integrator on
$A$, then the morphism
\[
L_!^\ab ( \Wh{A}{i} ): L_!^\ab(\Wh{A}{X}) \to
L_!^\ab(\Wh{A}{Y})
\]
is a monomorphism with projective cokernel.
\end{prop}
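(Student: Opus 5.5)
Since $L$ is free, each component $L_n$ is an object of $\Addinf(A^{\op})$, say $L_n = \bigoplus_{j\in J_n}\Wh{A^{\op}}{a_j}$. Since $L_!^\ab(Z) = Z\odot_A L$ for any abelian presheaf $Z$, Proposition~\ref{prop:tensorProductAddRight} computes $L_!^\ab$ degreewise:
\[
L_!^\ab(\Wh{A}{X})_n \simeq \bigoplus_{j\in J_n} \Z^{(X(a_j))} \simeq \Z^{(S_n(X))}, \qquad S_n(X) = \coprod_{j\in J_n} X(a_j).
\]
This description is natural in $X$. The plan is therefore to work degree by degree and to identify the map $L_!^\ab(\Wh{A}{i})_n$ with the map of free abelian groups induced by the map of sets
\[
S_n(i) = \coprod_j i_{a_j} : S_n(X)\to S_n(Y).
\]

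First step: $S_n(i)$ is injective, since each $i_{a_j}$ is injective and a coproduct of injections of sets is injective. A map of free abelian groups induced by an injection of bases is injective, and its cokernel is free on the complement $S_n(Y)\setminus S_n(X)$. So in each degree, $L_!^\ab(\Wh{A}{i})$ is a monomorphism whose cokernel is the free abelian group $\Z^{(S_n(Y)\setminus S_n(X))}$. Monomorphisms of chain complexes are checked degreewise, so $L_!^\ab(\Wh{A}{i})$ is a monomorphism.

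Second step: show that the cokernel $C$ is projective, meaning cofibrant in the projective model structure on $\Ch(\Ab)$ recalled before Proposition~\ref{i_Ahocolim}. Over $\Z$, subgroups of free groups are free, so the degreewise free non-negative complex $C$ is projective in the model-categorical sense. Concretely, $C$ is a bounded-below complex of projectives, and the cycles and boundaries are free. Alternatively, one can lift inductively along degrees, using that $0\to C\to\cdots$ is built from free generators. The point is that the relevant notion of ``projective cokernel'' here is the degreewise one attached to the monomorphism, and a degreewise split monomorphism with degreewise projective cokernel is exactly a projective cofibration.

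Main obstacle: justifying naturality of the identification
\[
X\odot_A \bigoplus_j \Wh{A^{\op}}{a_j} \simeq \bigoplus_j X(a_j)
\]
in $X$, so that the map induced by $i$ really is the map of bases. This follows because the isomorphism of Proposition~\ref{prop:tensorProductRepresentableLeft} is natural, being the co-Yoneda isomorphism, and Proposition~\ref{prop:tensorProductSymmetry} is natural in both variables. Note that the differentials of $L$ play no role in the argument.
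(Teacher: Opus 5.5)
Your proposal is correct and is essentially the paper's proof: the paper also writes $L_n=\bigoplus_{j}\Wh{A^{\op}}{a_j}$, reduces by additivity to a single summand, and identifies the map there with $\Z^{(i_a)}\colon \Z^{(Xa)}\to\Z^{(Ya)}$, which is injective with free cokernel on $Ya\setminus \im i_a$. Your extra remarks on naturality and on reading ``projective cokernel'' degreewise (which is what cofibrations in the projective structure on non-negative complexes are) are fine, but the appeal to subgroups of free groups being free is unnecessary, since any non-negative degreewise projective complex is already cofibrant.
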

\begin{proof}
Since $L$ is a free integrator, we can write, for any integer $n \geq
0$, 
\[
L_n = \bigoplus_{i\in I_n}\Wh{{A}^{\op}}{a_i} 
\]
and, by additivity, we only have to show that if $a$ is an object of
$A$, then the morphism of abelian groups
\[
\Wh{A}{i} \odot \Wh{{A}^{\op}}{a}: 
\Wh{A}{X} \odot_A \Wh{{A}^{\op}}{a} 
\to
\Wh{A}{Y} \odot_A \Wh{{A}^{\op}}{a}
\]
 is a monomorphism with projective cokernel. But this is the free
 abelian group morphism
\[
  \Z^{(i_a)}: \Z^{(Xa)}\to\Z^{(Ya)}
  \]
which is a monomorphism with cokernel the free abelian group on the set
$Ya\setminus \im i_a$.
\end{proof}

\begin{coro}\label{homologielibrecofibtriviale}
Let $A$ be a local test category, $j: X \to Y$ a trivial cofibration
for the Grothendieck-Cisinski model category structure on $\pref{A}$,
and $L: A \to \Ch(\Ab)$ a free integrator on $A$. Then the
morphism~$L^\ab_!(\Wh{A}{j})$ is a trivial cofibration in the projective
model structure on $\Ch(\Ab)$. 
\end{coro}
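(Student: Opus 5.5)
Two things need to be shown: that $L^\ab_!(\Wh{A}{j})$ is a cofibration in the projective model structure on $\Ch(\Ab)$, and that it is a quasi-isomorphism.

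\emph{Cofibration.} In the Grothendieck-Cisinski model structure (Theorem~\ref{thm:modelStructureCisinski}) the cofibrations are the monomorphisms, so $j$ is a monomorphism of presheaves. Proposition~\ref{prop:intLibreMonos} then says that $L^\ab_!(\Wh{A}{j})$ is a monomorphism of non-negative chain complexes whose cokernel is degreewise projective. In the projective model structure the cofibrations are exactly such monomorphisms; for bounded-below complexes degreewise projectivity of the cokernel suffices. Hence $L^\ab_!(\Wh{A}{j})$ is a cofibration.

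\emph{Quasi-isomorphism.} Since $L$ is a free integrator, $L_!^\ab = -\odot_A L$ computes the homology functor. By the corollary identifying $\H{A}{X}$ with $X\odot_A L_\bullet$ for any projective resolution of $\Z_{A^{\op}}$, we have $L^\ab_!(Z) \simeq \H{A}{Z}$ in $\Hotab$, naturally in $Z$. So it suffices to show that $\H{A}{\Wh{A}{j}}$ is an isomorphism in $\Hotab$. By Proposition~\ref{prop:colimiteHomotopiqueLibres}, $\H{A}{\Wh{A}{j}}$ is isomorphic to $\dk\Whf{\Delta}\nerf i_A(j)$. Since $j\in\W_A$, the functor $i_A(j)$ lies in $\W_\infty$. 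Its nerve is then a simplicial weak equivalence, and $\dk\Whf{\Delta}$ preserves weak equivalences. Therefore the composite is a quasi-isomorphism, and $L^\ab_!(\Wh{A}{j})$ is a trivial cofibration.

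\emph{Main obstacle.} The delicate point is the naturality of the identification $L_!^\ab \simeq \Hf{A}$ at the level of morphisms, that is, that the map $L_!^\ab(\Wh{A}{j})$ itself, and not merely its source and target, represents $\H{A}{\Wh{A}{j}}$. I would argue this via the derived tensor product of Proposition~\ref{produittensorielderive}. The comparison there passes through $\Tot(M_\bullet\odot_A L_\bullet)$ and is natural in $X$ once projective resolutions are chosen functorially, for instance the resolution $L_X$ of Proposition~\ref{prop:resAddinf}. This is enough to conclude.
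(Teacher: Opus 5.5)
Your proposal is correct and follows the paper's proof: cofibrancy comes from Proposition~\ref{prop:intLibreMonos}, and the quasi-isomorphism comes from identifying $L_!^\ab$ with $\Hf{A}$ and then applying Proposition~\ref{prop:colimiteHomotopiqueLibres}. The paper takes the naturality point you flag for granted, since its corollary on integrators gives a canonical natural isomorphism $\H{A}{X}\simeq X\odot_A L_\bullet$; your argument via functorial projective resolutions is a valid way to justify it.
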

\begin{proof}
Since we proved in Proposition \ref{prop:colimiteHomotopiqueLibres}
that the functor $L^\ab_!\Whf{A}$ sends weak equivalences of
presheaves on $A$ to quasi-isomorphisms,
we can conclude using the proposition above.
\end{proof}

\begin{theorem}\label{thm:modelStructureAbPresheaves}
Let $A$ be a strong Whitehead and local test category. The category
$\prefab{A}$ can be given the structure of a model category where weak
equivalences are the elements of~$\Wab_A$ and fibrations are the morphisms of
abelian presheaves whose underlying morphism of presheaves is a
fibration in the Grothendieck-Cisinski model structure on $\pref{A}$.
\end{theorem}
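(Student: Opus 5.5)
The plan is to apply Crans' transfer theorem (Theorem \ref{th:CransTransfert}) to the free/forgetful adjunction $\Whf{A} \dashv \U$. Here $\M = \pref{A}$ carries the Grothendieck-Cisinski model structure of Theorem \ref{thm:modelStructureCisinski}, which is cofibrantly generated by some pair $(I,J)$ of sets of monomorphisms (see \cite{cisinskipref}). The target $\M' = \prefab{A}$ is locally presentable, being a category of additive-valued presheaves on a small category. Once the hypothesis of Crans' theorem is checked, we obtain a model structure on $\prefab{A}$ generated by $(\Whf{A}I, \Whf{A}J)$. Its fibrations are $\U^{-1}(\textit{Fib})$, exactly as in the statement, and its weak equivalences are $\U^{-1}\W_A$. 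By the strong Whitehead condition (Definition \ref{def:Whitehead}), $\U^{-1}\W_A = \Wab_A$.

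The only real work is the condition $\U(l(r(\Whf{A}J))) \subset \W_A$. Using the strong Whitehead condition again, this is equivalent to $l(r(\Whf{A}J)) \subset \Wab_A$, that is, every such morphism is sent to an isomorphism by $\Hf{A}$. Fix a free integrator, for instance the Bousfield-Kan integrator $\ell_A$ of paragraph \ref{paragr:BousfieldKanIntegrator}. Then $\Hf{A}$ is computed by the cocontinuous functor $(\ell_A)_!^\ab : \prefab{A} \to \Ch(\Ab)$. This functor is a left adjoint (Proposition \ref{prop:leftKanExtAb}), so it sends $l(r(K))$ into $l(r((\ell_A)_!^\ab K))$ for any class $K$. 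The argument is the standard one: $l(r(K))$ consists of retracts of transfinite composites of pushouts of elements of $K$ (small object argument), and a cocontinuous functor preserves pushouts, transfinite composites and retracts.

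Take $K = \Whf{A}J$. Each element of $J$ is a trivial cofibration in $\pref{A}$, so by Corollary \ref{homologielibrecofibtriviale} its image $(\ell_A)_!^\ab \Whf{A}(j)$ is a trivial cofibration in the projective model structure on $\Ch(\Ab)$. Trivial cofibrations are closed under pushouts, transfinite composites and retracts. Hence $(\ell_A)_!^\ab$ sends every element of $l(r(\Whf{A}J))$ to a trivial cofibration, and in particular to a quasi-isomorphism. Since $(\ell_A)_!^\ab$ computes $\Hf{A}$, every such morphism lies in $\Wab_A$, and therefore in $\U^{-1}\W_A$. This is exactly Crans' condition.

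I expect the main obstacle to be justifying that $l(r(\Whf{A}J))$ is indeed described by the cell-complex characterization. This needs $J$ to be a set whose domains are small, which holds because $\prefab{A}$ is locally presentable. A second point to handle is that $(\ell_A)_!^\ab$ computes $\Hf{A}$ on all abelian presheaves, not just on projective ones. This holds because $X \odot_A \ell_A$ is the derived tensor product with a projective resolution of $\Z_{A^{\op}}$ (Proposition \ref{produittensorielderive} and the corollary following the lemma on $X \odot_A \Z_{A^{\op}}$). Both points are routine, so the heart of the proof is the combination of Corollary \ref{homologielibrecofibtriviale} with the strong Whitehead hypothesis.
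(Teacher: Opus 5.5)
Your proposal is correct and follows the paper's proof essentially step for step. Both apply Crans' transfer along $\Whf{A} \dashv \U$, use the strong Whitehead condition to identify $\U^{-1}\W_A$ with the preimage of quasi-isomorphisms under $L_!^\ab$ for a free integrator $L$, use $L_!^\ab(l(r(\Whf{A}J))) \subset l(r(L_!^\ab\Whf{A}J))$, and use Corollary \ref{homologielibrecofibtriviale}.

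The step you flag as the main obstacle is actually immediate. For any adjunction $F \dashv G$, a lifting problem of $F(f)$ against $g$ corresponds to one of $f$ against $G(g)$, which gives $F(l(r(K))) \subset l(r(FK))$ with no cell-complex description needed. Moreover, $l(r(FK))$ is contained in the trivial cofibrations as soon as $FK$ is.
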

\begin{proof}
We will use Theorem \ref{th:CransTransfert} taking for $G$ the functor
\[
\Whf{A}: \pref{A}\to\prefab{A} \pbox{.}
\]
Since $\prefab{A}$ is locally presentable, we only need to show that
if $J$ is a set of generating trivial cofibrations for $\pref{A}$,
then we have the inclusion $l(r(\Z J) \subset \U^{-1}\W_A$. Let $L$ be
a free integrator on~$A$. Since $A$ is a strong Whitehead category, we have
\[
\U^{-1}\W_A = (L_!^\ab)^{-1}\W_{\mathrm{qis}}
\]
and, therefore, we only need to show that the functor $L^\ab_!:
\prefab{A} \to \Ch(\Ab)$ sends elements of $l(r(\Z J))$ to
quasi-isomorphisms. Since the functor $L^\ab_!$ is a left adjoint, the
small object argument (see for example \cite[Corollary
10.5.22]{hirschhorn2003model}) implies that 
\[
L^\ab_!(l(r(\Z J))) \subset l(r(L^\ab_!\Z(J))) \pbox{.}
\]
Since we just showed that elements of $L^\ab_!(\Z J)$ are trivial
cofibrations in $\Ch(\Ab)$, and since trivial cofibrations are stable
under pushouts, transfinite compositions and retracts, we can
conclude that every element of $l(r(\Z J))$ is also in $\Wab_A$.
\end{proof}

\begin{remark}
The cofibrations in this model structure are generated by the image of
monomorphisms by the functor $\Whf{A}: \pref{A}\to\prefab{A}$. In
particular, every free abelian presheaf is a cofibrant object.
\end{remark}

\begin{prop}\label{prop:foncteurAspheriqueEqQuillenAb}
Let $A$ be a strong Whitehead and local test category, $B$ a local test
category and $u: A \to B$ an aspherical functor, Then:
\begin{enumerate}
\item $B$ is a strong Whitehead category, and can be endowed with the model
structure defined in Theorem \ref{thm:modelStructureAbPresheaves};
\item the functors
\[
  (u^*)^\ab: \prefab{B} \to \prefab{A}\mdvirg u_*^\ab: \prefab{A} \to
  \prefab{B}
\]
forms a Quillen equivalence for the model structure defined in Theorem
\ref{thm:modelStructureAbPresheaves}.
\end{enumerate}
\end{prop}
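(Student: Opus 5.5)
Part (1) follows directly from Proposition \ref{prop:aspheriqueSourceWhitehead}. Since $u$ is aspherical and $A$ is a strong Whitehead category, $B$ is a strong Whitehead category. As $B$ is also a local test category by hypothesis, Theorem \ref{thm:modelStructureAbPresheaves} gives the transferred model structure on $\prefab{B}$.

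For part (2), I first check that $((u^*)^\ab, u_*^\ab)$ is a Quillen pair, and the easiest route is via the right adjoint. In the transferred structures, a morphism $g$ of $\prefab{A}$ is a fibration (resp.\ trivial fibration) if and only if $\U g$ is a fibration (resp.\ trivial fibration) in the Grothendieck--Cisinski structure. For trivial fibrations this uses $\Wab_A=\U^{-1}\W_A$. By Proposition \ref{StructureCisinskiMorphAspheriquesEqQuillen}, $u_*$ is a right Quillen functor between the Grothendieck--Cisinski structures. By the right-hand square of diagram \ref{diag:invImageCommutationsOubli} we have $\U u_*^\ab\simeq u_*\U$. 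Hence $u_*^\ab$ preserves fibrations and trivial fibrations, and so $(u^*)^\ab$ is left Quillen.

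To prove that the pair is a Quillen equivalence, I use the standard criterion: $(u^*)^\ab$ reflects weak equivalences, and the derived unit is a weak equivalence on cofibrant objects. Reflection and preservation of weak equivalences come from the previous proposition (condition (5)), since $u$ is $\Wab_\infty$-aspherical by Proposition \ref{prop:asphericalAbAspherical}. Alternatively, they follow from the chain of equivalences in the proof of Proposition \ref{prop:aspheriqueSourceWhitehead}. In particular $(u^*)^\ab$ preserves all weak equivalences, so its left derived functor is induced by $(u^*)^\ab$ itself. For the unit, take a cofibrant $X$ in $\prefab{B}$ and a fibrant replacement $j:(u^*)^\ab X\to R$ in $\prefab{A}$. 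The derived unit is $X\to u_*^\ab R$. After applying $(u^*)^\ab$, it factors as
\[
(u^*)^\ab X\xrightarrow{\ j\ } R \quad\text{followed by the counit}\quad (u^*)^\ab u_*^\ab R\to R ,
\]
in the sense that the derived unit composed with the counit equals $j$. So it suffices, by 2-out-of-3 and reflection, to show that the counit $(u^*)^\ab u_*^\ab R\to R$ is a weak equivalence for fibrant $R$.

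Applying $\U$ turns this counit into the counit $u^*u_*\U R\to\U R$, and $\U R$ is Grothendieck--Cisinski fibrant. Since $(u^*,u_*)$ is a Quillen equivalence (Proposition \ref{StructureCisinskiMorphAspheriquesEqQuillen}) and $u^*$ preserves and reflects weak equivalences (Proposition \ref{prop:MorphismesAspheriquesLocFond}(5)), this counit lies in $\W_A$. The strong Whitehead condition on $A$ then gives that the abelian counit lies in $\Wab_A$.

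The main point needing care is this last step. One must check that every object of $\prefab{A}$ is cofibrant in the Grothendieck--Cisinski sense after forgetting, which holds since cofibrations there are monomorphisms. One must also check that the two counits correspond under $\U$, which follows because the adjunction $((u^*)^\ab,u_*^\ab)$ is obtained from $(u^*,u_*)$ by passing to abelian group objects.
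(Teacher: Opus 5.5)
Your proof is correct. Part (1) is exactly the paper's argument, but you organise both halves of part (2) differently.

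\textbf{Quillen pair.} The paper argues on the left. Cofibrations of $\prefab{B}$ are generated by $\Whf{B}$ of monomorphisms, and $(u^*)^\ab\Whf{B}\simeq\Whf{A}u^*$ sends these to cofibrations. Preservation of weak equivalences then takes care of trivial cofibrations. You argue on the right: in the transferred structures $\U$ creates both fibrations and trivial fibrations (the latter because $\Wab=\U^{-1}\W$ for both $A$ and $B$). You then combine $\U u_*^\ab\simeq u_*\U$ with Cisinski's result, which avoids generating sets altogether.

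\textbf{Quillen equivalence.} The paper uses the characterisation ``for cofibrant $X$ and fibrant $Y$, $f\colon (u^*)^\ab X\to Y$ is a weak equivalence iff $f^\sharp$ is''. It transports this from $(u^*,u_*)$ through $\U$ in a single chain of equivalences, using the strong Whitehead property of $A$ and of $B$ and $\U(f^\sharp)=(\U f)^\sharp$. Your unit/counit argument uses the same inputs plus the reflection property coming from $\Wab_\infty$-asphericity. It is longer but equally valid.

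Two small corrections.

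First, ``the left adjoint reflects weak equivalences and the derived unit is a weak equivalence'' is not by itself a criterion for a Quillen equivalence. A fully faithful conservative left adjoint need not be an equivalence: take the inclusion of the bottom element $\{0\}\hookrightarrow\{0<1\}$ with trivial model structures. The valid criteria pair reflection by the left adjoint with the derived \emph{counit}, or reflection by the right adjoint with the derived unit.

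Your argument survives because you actually prove that $(u^*)^\ab u_*^\ab R\to R$ is a weak equivalence for every fibrant $R$. Since $(u^*)^\ab$ preserves all weak equivalences, this map is the derived counit up to weak equivalence. Reflection plus this already finishes the proof, so your unit step is superfluous.

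Second, in the last step the cofibrancy you need is that of $u_*\U R$ in $\pref{B}$, not of objects of $\prefab{A}$. That is what makes the Grothendieck--Cisinski derived counit equal to the plain counit $u^*u_*\U R\to\U R$. It holds for the same reason you give: all monomorphisms are cofibrations.
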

\begin{proof}
We have already proved the first point in
Proposition~\ref{prop:aspheriqueSourceWhitehead}. For the second point,
recall that cofibrations and trivial cofibrations in $\prefab{B}$
are generated by the image under $\Whf{B}: \pref{B} \to \prefab{B}$
of cofibrations and trivial cofibrations in the Grothendieck-Cisinski
structure  on~$\pref{B}$. To prove that $(u^*)^\ab$ is a left Quillen
functor, we only have to check that it sends morphisms of the form
$\Z_B^{(i)}$, where $i$ is a cofibration in $\pref{B}$, 
to cofibrations of $\prefab{A}$ (since we already know that it preserves weak
equivalences). This follows from the fact that
$\Whf{A}$ and $u^*$ are left Quillen functors, and by the
commutativity of diagram \ref{diag:invImageCommutationsOubli}.

Let's prove that $((u^*)^\ab, u^\ab_*)$ is a Quillen equivalence. Let $X$ be an
abelian presheaf on ${B}$ and~$Y$ a fibrant object of $\prefab{A}$.
Then $\U(Y)$ is a fibrant object of $\pref{A}$, and $\U(X)$ is a cofibrant
object of $\pref{B}$. Using Proposition
\ref{StructureCisinskiMorphAspheriquesEqQuillen}, we can then conclude
from the following chain of equivalences:
\begin{align*}
& f: (u^*)^\ab X \to Y \in \Wab_A \\
&\iff \U f: \U(u^*)^\ab X \to \U Y \in \W_A & (\text{strong Whitehead
condition for } A)
\\
&\iff \U f: u^*\U X \to \U Y \in \W_A 
&(\text{commutativity of \ref{diag:invImageCommutationsOubli}})
\\
&\iff (\U f)^\sharp: \U X \to u_*\U Y \in \W_B
&(\text{Proposition } \ref{StructureCisinskiMorphAspheriquesEqQuillen})
\\
&\iff \U(f^\sharp): \U X \to \U u_*^\ab Y \in \W_B
&(\text{commutativity of \ref{diag:invImageCommutationsOubli}})
\\
&\iff f^\sharp: X \to u_*^{\ab} Y \in \Wab_B &(\text{strong Whitehead
condition for }B & )
\end{align*}
where we used the symbol $(-)^\sharp$ to denote the transpose of morphisms.
\end{proof}

\begin{coro}\label{coro:fonctAspheriqueTestLocalWhiteheadPsTestHomEq}
Let $A$ be a strong Whitehead and local test category, $B$ a local test
category, and~$u: A \to B$ an aspherical functor. Then $A$ is a
homologically pseudo-test category (\ref{def:homPseudoTest}) if and only if the same holds for~$B$.
\end{coro}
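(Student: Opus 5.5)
The idea is to show that $\overline{\Hf{B}}$ and $\overline{\Hf{A}}$ differ by composition with an equivalence of localized categories. Then one of them is an equivalence exactly when the other is.

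First, by Proposition \ref{prop:foncteurAspheriqueEqQuillenAb}, $B$ is a strong Whitehead category. Moreover, $((u^*)^\ab, u_*^\ab)$ is a Quillen equivalence between $\prefab{B}$ and $\prefab{A}$, for the model structures of Theorem \ref{thm:modelStructureAbPresheaves}. The weak equivalences of these structures are exactly $\Wab_B$ and $\Wab_A$. Their homotopy categories are therefore $\Hotab_B$ and $\Hotab_A$, and the total left derived functor of $(u^*)^\ab$ is an equivalence $\Hotab_B \to \Hotab_A$. Since $(u^*)^\ab$ already preserves all weak equivalences (condition (5) of the proposition on $\Wab_\infty$-aspherical functors, or directly from asphericity and the strong Whitehead conditions), this derived functor is simply the functor $\overline{(u^*)^\ab}$ induced on localizations by the universal property. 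Hence $\overline{(u^*)^\ab} : \Hotab_B \to \Hotab_A$ is an equivalence of categories.

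Second, $u$ is aspherical, so it is $\Wab_\infty$-aspherical by Proposition \ref{prop:asphericalAbAspherical}. Condition (4) of the characterization of $\Wab_\infty$-aspherical functors then gives a natural isomorphism $\Hf{A}\circ(u^*)^\ab \simeq \Hf{B}$ of functors $\prefab{B}\to\Hotab$. Passing to localizations, the universal property of localization turns this into a natural isomorphism $\overline{\Hf{A}}\circ\overline{(u^*)^\ab} \simeq \overline{\Hf{B}}$. Natural transformations between functors out of a localization correspond to those between the composites with the localization functor, so this step is formal.

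Finally, since $\overline{(u^*)^\ab}$ is an equivalence, two-out-of-three for equivalences of categories shows that $\overline{\Hf{B}}$ is an equivalence if and only if $\overline{\Hf{A}}$ is. By Definition \ref{def:homPseudoTest}, this says that $A$ is homologically pseudo-test if and only if $B$ is.

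The only point needing care is the first step: identifying the homotopy category of the transferred model structure with $\Hotab_A$ as defined, and identifying the derived functor of $(u^*)^\ab$ with $\overline{(u^*)^\ab}$. Both follow because the weak equivalences of the model structure are exactly $\Wab_A$ and because $(u^*)^\ab$ is homotopical.
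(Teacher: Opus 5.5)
Your proposal is correct and follows the paper's proof essentially step by step. The steps are the same: $B$ is strong Whitehead, the Quillen equivalence of Proposition \ref{prop:foncteurAspheriqueEqQuillenAb} makes $\overline{(u^*)^\ab}$ an equivalence, $\Wab_\infty$-asphericity gives the commutative triangle, and you conclude by two-out-of-three for equivalences. Your remark that the left derived functor of $(u^*)^\ab$ coincides with $\overline{(u^*)^\ab}$, because $(u^*)^\ab$ preserves all weak equivalences, spells out a point the paper leaves implicit.
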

\begin{proof}
By Proposition \ref{prop:aspheriqueSourceWhitehead}, since $u$ is
aspherical and $A$ is a strong Whitehead category, then $B$ is a strong Whitehead
category. Thus, we can endow $\prefab{B}$ with the model category
structure described in Theorem \ref{thm:modelStructureAbPresheaves}.
Since aspherical morphisms are $\Wab_\infty$-aspherical, the diagram
\[
\xymatrix{
\Hotab_B \ar[rr]^{\overbar{(u^*)^\ab}} \ar[rd]_{\Hf{B}} && \Hotab_{A}
\ar[ld]^{\Hf{A}} \\
& \Hotab 
} 
\]
is commutative up to a natural isomorphism, and Proposition
\ref{prop:foncteurAspheriqueEqQuillenAb} 
shows that the horizontal arrow is an equivalence of categories. We
can then conclude that $\Hf{A}$ is an equivalence of categories if and
only if $\Hf{B}$ is an equivalence of categories.
\end{proof}

\begin{example}
As a direct application of this corollary, the category $\Delta^n$ for $n>0$ is
a homologically pseudo-test category, as explained in example
\ref{ex:WhiteheadCategories}.
\end{example}

\begin{theorem}\label{thm:WhiteheadTestHomPseudoTest}
If $A$ is a strong Whitehead and test category, then $A$ is a
homologically pseudo-test category.
\end{theorem}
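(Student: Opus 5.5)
The plan is to reduce to $\Delta$ through Corollary \ref{coro:fonctAspheriqueTestLocalWhiteheadPsTestHomEq}, using a product with $\Delta$ as an intermediate category that maps asphericaly to both $A$ and $\Delta$. We know that $\Delta$ is a homologically pseudo-test category by the Dold-Kan example, a strong Whitehead category (Example \ref{ex:DeltaWhitehead}), and a (strict) test category. There is, however, no aspherical functor relating $A$ and $\Delta$ directly in general. So I will consider the category $C = \tranche{\Delta}{i_A^*\cdots}$, or more simply the product $A\times\Delta$ together with its two projections
\[
p_1 : A\times\Delta \to A \mdvirg p_2 : A\times\Delta \to \Delta \pbox{.}
\]

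First I check that $p_2$ is aspherical. For an object $\Delta_n$, the slice $\tranche{(A\times\Delta)}{\Delta_n}$ is isomorphic to $A\times\tranche{\Delta}{\Delta_n}$. The second factor has a final object, and $A$ is aspherical because it is a test category, hence a weak test category. A product of aspherical categories is aspherical, since $\W_\infty$ is closed under products, as one sees with nerves. Similarly, $p_1$ is aspherical because $\tranche{(A\times\Delta)}{a}\simeq \tranche{A}{a}\times\Delta$, and $\Delta$ is aspherical.

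Next, since $\Delta$ is totally aspherical and $A$ is a local test category, the product $A\times\Delta$ is a local test category. To justify this, I would invoke the standard fact that a product of a local test category with a test category is local test, or alternatively apply Theorem \ref{thm:EquivalencesFoncteursTestLocal} to slices; I expect this verification to be the main technical obstacle. I will also need $A\times\Delta$ to be strong Whitehead in order to apply Corollary \ref{coro:fonctAspheriqueTestLocalWhiteheadPsTestHomEq} to $p_2$. The corollary as stated needs the \emph{source} to be strong Whitehead, which is not immediate here, so the order of applications must be arranged carefully. Proposition \ref{prop:aspheriqueSourceWhitehead} only transports the strong Whitehead property from source to target, so I would instead prove directly that $A\times\Delta$ is strong Whitehead. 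For this, the two equivalences
\[
f\in\Wab_{A\times\Delta}\iff p_2$-restriction $\ldots
\]
are not available. Instead, I will use the $\Wab_\infty$-asphericity proposition for $p_1$, which gives $f \in \Wab_{A\times\Delta} \iff$ the corresponding statement downstairs only for restricted presheaves. That is a genuine gap.

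The cleaner route is therefore to apply Corollary \ref{coro:fonctAspheriqueTestLocalWhiteheadPsTestHomEq} directly, choosing the product so that $\Delta$-side arguments do the work. Take the category of elements $B=\tranche{\Delta}{i_A^*(\cdot)}$? I prefer the following approach. Inspecting the proof of the corollary, all that is used is the commutative triangle of homology functors, which holds for any aspherical functor because it is $\Wab_\infty$-aspherical, together with the fact that $\overline{(u^*)^\ab}$ is an equivalence. I will establish this equivalence for $p_1$ and $p_2$ by the Quillen-equivalence argument of Proposition \ref{prop:foncteurAspheriqueEqQuillenAb}, run on the level of the classes $\U^{-1}\W$. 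Both $A$ and $\Delta$ are strong Whitehead, and $(p^*)^\ab$ reflects $\Wab$ by condition (5) of the asphericity proposition. It therefore suffices to show that $\overline{(p^*)^\ab}:\Hotab_A\to\Hotab_{A\times\Delta}$ is an equivalence for each projection. I will get this by using the transferred model structure on $\prefab{A}$ and $\prefab{\Delta}$ (Theorem \ref{thm:modelStructureAbPresheaves}) and the right adjoint $p_*^\ab$, checking that the derived unit and counit are weak equivalences by forgetting to sets and invoking Proposition \ref{StructureCisinskiMorphAspheriquesEqQuillen}. From the resulting triangle $\Hf{A}\circ\overline{(p_1^*)^\ab}^{-1}\simeq\Hf{A\times\Delta}\simeq \Hf{\Delta}\circ\overline{(p_2^*)^\ab}^{-1}$, it follows that $\overline{\Hf{A}}$ is an equivalence because $\overline{\Hf{\Delta}}$ is. The hard step remains establishing local test and Whitehead behaviour of $A\times\Delta$, which is needed to control $p_*^\ab$.
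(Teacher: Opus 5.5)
\paragraph{A genuine gap: the apex $A\times\Delta$ is not known to be strong Whitehead.}
You flag this gap yourself, and it is real. With the span $A\xleftarrow{p_1}A\times\Delta\xrightarrow{p_2}\Delta$, every step needs $A\times\Delta$ to be a strong Whitehead category, and nothing you have provides this.

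Corollary \ref{coro:fonctAspheriqueTestLocalWhiteheadPsTestHomEq} requires the \emph{source} of the aspherical functor to be strong Whitehead, and here the apex is the source of both legs. Your workaround does not avoid this, for three reasons:
\begin{itemize}
\item The chain of equivalences in the proof of Proposition \ref{prop:foncteurAspheriqueEqQuillenAb} uses the strong Whitehead condition for the domain of the functor (here $A\times\Delta$) in its very first step.
\item The fibrant objects that argument needs live in the transferred model structure of Theorem \ref{thm:modelStructureAbPresheaves} on $\prefab{A\times\Delta}$. The construction of that structure itself uses the strong Whitehead condition, to identify $\U^{-1}\W$ with $(L_!^\ab)^{-1}\W_{\mathrm{qis}}$.
\item Even an equivalence of localizations at $\U^{-1}\W$ could only be compared with $\Hf{A\times\Delta}$ if you knew $\U^{-1}\W_{A\times\Delta}\subset\Wab_{A\times\Delta}$.
\end{itemize}

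Proposition \ref{prop:aspheriqueSourceWhitehead} transports the property only from source to target, and it genuinely does not go backwards. The projection $\tranche{\Delta}{\nerf(\Grefl)}\to\Delta$ is aspherical, since $\Delta$ is totally aspherical and $\Grefl$ has a final object. Yet the introduction points out that $\tranche{\Delta}{\nerf(\Grefl)}$ is not strong Whitehead. So you would need an independent proof that $A\times\Delta$ is strong Whitehead, and none is given.

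By comparison, the local test property of $A\times\Delta$, which you single out as the main obstacle, is the lesser issue. For instance, pulling back a separating locally aspherical segment of $\pref{A}$ along $p_1$ gives one on $A\times\Delta$.

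\paragraph{The missing idea: reverse the roof.}
You want strong Whitehead to be pushed forward, not pulled back. The paper uses a cospan $A\xrightarrow{u}B\xleftarrow{v}\Delta$. Here $B$ is the full subcategory of $\Cat$ on the categories $\tranche{A}{a}$ and $\tranche{\Delta}{\Delta_n}$, and $u$, $v$ are induced by $i_A$ and $i_\Delta$. The argument then runs as follows.
\begin{itemize}
\item Since $i_A$ and $i_\Delta$ are aspherical (weak test hypothesis) and $B\hookrightarrow\Cat$ is fully faithful, Proposition \ref{lemmeFoncteursAspheriquesTriCommutatif} makes $u$, $v$ and the inclusion aspherical.
\item Proposition \ref{prop:aspheriqueSourceWhitehead}, applied to $v$, makes $B$ strong Whitehead.
\item Total asphericity passes from $\Delta$ to $B$ along $v$. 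Together with Theorem \ref{thm:EquivalencesFoncteursTestLocal}, this makes $B$ a local test category.
\item The sources of both legs, $A$ and $\Delta$, are now strong Whitehead local test categories. So Corollary \ref{coro:fonctAspheriqueTestLocalWhiteheadPsTestHomEq} applies to $u$ and to $v$, and the result follows from the Dold--Kan case.
\end{itemize}
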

\begin{proof}
Let $A_0$ be a homologically pseudo-test, strong Whitehead and
strict test category (for example, we can take the category $\Delta$
of simplices). Let $B$ be the full subcategory of $\Cat$ whose objects
are the images by $i_A$ of objects of $A$ and images by $i_{A_0}$ of
objects of $A_0$. We then get the following commutative diagram
\[
\xymatrix{
& \Cat 
\\
A \ar[r]_{u} \ar[ru]^{i_A} &
B \ar@{^{(}->}[u]^{i} &
A_0 \ar[l]^{v} \ar[lu]_{i_{A_0}} 
} 
\]
in $\Cat$. We will show that $u$ and $v$ are aspherical functors, and
that $B$ is a strong Whitehead and local test category, before applying
Corollary \ref{coro:fonctAspheriqueTestLocalWhiteheadPsTestHomEq}.
Since $A_0$ is a local test category, the functor~$i_{A_0}$ is
aspherical by Proposition \ref{prop:testFaiblei_AAspherique}.
Moreover $i$ is full and faithful, so Proposition
\ref{lemmeFoncteursAspheriquesTriCommutatif} implies that $i$ and $v$ are
aspherical. Since $A$ is a local test category, the same argument shows that
$i_A$ and $u$ are also aspherical. Since $A_0$ is a strong Whitehead category
and $v$ is aspherical, Proposition \ref{prop:aspheriqueSourceWhitehead} implies
that $B$ is a strong Whitehead
category,

Let's show that $B$ is a local test category. 
Consider a small aspherical category $C$. Since $i$ is aspherical,
$i^*(C)$ is an aspherical presheaf on $B$ by Theorem
\ref{thm:EquivalencesFoncteursTestLocal}. But since $v$ is aspherical and $A_0$
is totally aspherical,
Proposition \ref{totAspheriqueMorphismeAspherique} implies that $B$ is also
totally aspherical. This implies by Proposition
\ref{prop:totalementAspheriqueEquivalences} that $i^*(C)$ is a locally
aspherical presheaf. By Theorem
\ref{thm:EquivalencesFoncteursTestLocal}, this implies that $B$ is a
local test category and that $i$ is a local test functor.
Finally, since $A$, $A_0$ and $B$ are strong Whitehead and local test
categories, and since there is a zig-zag of aspherical functors
\[
A \xrightarrow{u} B \xleftarrow{v} A_0 \pbox{,}
\]
we can conclude using Corollary
\ref{coro:fonctAspheriqueTestLocalWhiteheadPsTestHomEq} that $A$ is a
homologically pseudo-test category, since the same holds for
$A_0$.
\end{proof}

\begin{remark}
We showed in Proposition \ref{prop:intLibreMonos} that if $L$ is a free
integrator on a small category $A$, the functor
\[
  L_!^\ab: \prefab{A} \to \Ch(\Ab)
\]
sends monomorphisms of free abelian presheaves to monomorphisms with projective
cokernels. Moreover, any small category can be equipped a free integrator, as
explained in paragraph \ref{paragr:BousfieldKanIntegrator}. In particular, if
$A$ is a strong Whitehead and test category, we actually get a Quillen
equivalence between the model structure on $\prefab{A}$ given in Theorem
\ref{thm:modelStructureAbPresheaves} and the projective model
structure on~$\Ch(\Ab)$. 
\end{remark}

\section{\texorpdfstring{Joyal's category $\Theta$ is a homologically
pseudo-test category}%
{Joyal's category {Θ} is a homologically pseudo-test category}}
\label{sec:theta}

We first recall the construction of $\Theta$ using wreath products, due to
Berger \cite{berger2007iterated}, which makes it easy to define an
aspherical functor $\Delta^n \to \Theta_n$ for every integer $n \geq 0$,
allowing to apply directly~Corollary
\ref{coro:fonctAspheriqueTestLocalWhiteheadPsTestHomEq}.
For $\Theta$, we will use a variation on the notion of asphericity for functors
whose codomain is a presheaf category and show the analogue of Proposition
\ref{prop:aspheriqueSourceWhitehead} in this context. This will prove, using
previous results from Ara and Maltsiniotis \cite{ara2022compnerfs}, that
$\Theta$ is a strong Whitehead category.

\begin{paragr}
Let $A$ be a small category. We denote by $\Delta\wr A$ the small
category whose objects are pairs $[\Delta_n;(a_i)_{1\leq i \leq n}]$ where
$n$ is a non-negative integer and $(a_i)_{1 \leq i \leq n}$ is a
family of object of $A$, and whose morphisms 
\[
  [\Delta_n;(a_i)_{1 \leq i \leq n}] \to [\Delta_m;(a'_j)_{1 \leq j
  \leq m}]
\]
are pairs $[\varphi, \mathbf{f}]$ where $\varphi: \Delta_n \to
\Delta_m$ is a morphism of $\Delta$ and 
\[
    \mathbf{f}=(f_{ji}:a_i \to a'_j)
    _{
      1 \leq i \leq n,\,\varphi(i-1)<j\leq\varphi(i)
    }
\]
is a family of morphisms of $A$. Given two composable morphisms
\[
  [\Delta_{n}  ;( a_i)_{1 \leq i \leq n} ] \xrightarrow{[\varphi, \mathbf{f}]}
  [\Delta_{n'} ;( a'_{i'})_{1 \leq i' \leq n'}] \xrightarrow{[\varphi', \mathbf{f'}]}
  [\Delta_{n''};( a''_{i''})_{1 \leq i'' \leq n''}] \mdvirg
\]
their composite is defined by the pair $[\varphi'',\mathbf{f''}]$
with $\varphi''=\varphi'\varphi$ and
\[
\mathbf{f''}=(f''_{i''i})_{
    1 \leq i \leq n~,~
    \varphi''(i-1) < i'' \leq \varphi''(i) } \mdvirg
    f''_{i''i} = f'_{i''i'}f_{i'i}
\]
where $i'$ is the only integer such that $\varphi(i-1) < i' \leq
\varphi(i)$ and $ \varphi'{(i'-1)} < i'' \leq \varphi'(i')$. 
\end{paragr}

\begin{paragr}
\label{def:mu_A}
Given a functor $F: A \to B$ between small categories, we denote by 
\[
\Delta\wr F: \Delta\wr A \to \Delta\wr B  
\]
the functor sending objects $[\Delta_n;(a_i)_i]$ of
$\Delta \wr A$ to $ [\Delta_n;(F(a_i))_{i}]$, and
morphisms $[\varphi, \mathbf{f}=(f_{ji})_{i,j}]$ to 
\[
(\Delta\wr F)[\varphi,\mathbf{f}] = [\varphi, F(\mathbf{f}) =
\left(F(f_{ji})\right)_{i,j}] \pbox{.}
\]

Note that if $F$ is a faithful (resp.~fully faithful) functor, then
the same is true for the functor $\Delta\wr F$. In particular, if $a$
is an object of $A$, we get a faithful functor

\[
I_a: \Delta \wr e \simeq \Delta \xrightarrow{\Delta \wr a} \Delta \wr
A \pbox{,}
\]
where we also denoted by $a$ the corresponding functor from the
terminal category $e$ to $A$. This functor is fully faithful if $a$
doesn't admit non-trivial endomorphisms. 
This construction is also functorial in $a$, and we get a functor
denoted 
\[
\mu_A: \Delta \times A \to \Delta \wr A \mdvirg (\Delta_n, a) \mapsto
I_a(\Delta_n) \pbox{.}
\]
\end{paragr}

\begin{paragr}

\label{defThetaCouronne} For any integer $n\geq 0$, we denote by
$\Theta_n$ the category defined inductively in the following way:

\begin{itemize}
\item $\Theta_0 = e$ is the final small category; 
\item $\Theta_{n+1} = \Delta \wr \Theta_n$ for $n\geq 0$.
\end{itemize}
Notice that since $\Delta_0$ is the final object of $\Delta$, we get a
fully faithful functor $\Theta_n \hookrightarrow \Theta_{n+1}$ for any
integer~$n \geq 0$. We denote by $\Theta$ the colimit of the diagram
\[
\Theta_0 \hookrightarrow \Theta_1 \hookrightarrow \Theta_2
\hookrightarrow \cdots 
\]
in $\Cat$.
\end{paragr}

\begin{remark}\label{remark:thetaAltDef}
This category is equivalent to the category introduced by Joyal in
\cite{joyal1997disks}. A description of $\Theta$ as a full subcategory
of the category $\omega\Cat$ was conjectured in
\cite{batanin2000multitude} and proven independently in
\cite{makkai2001duality} and~\cite{berger2007iterated}. 
\end{remark}

\begin{theorem}[Cisinski-Maltsiniotis]\label{thm:ThetaTestStricte}
For any integer $n\geq1$, the category $\Theta_n$ is a strict test
category. Moreover, the category $\Theta$ is also a strict test
category.
\end{theorem}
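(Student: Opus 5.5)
The natural route is to exploit the criterion that a totally aspherical local test category is a strict test category, which by the last proposition of Section \ref{sec:testCategories} reduces to checking that each $\Theta_n$ (and $\Theta$) is totally aspherical and a local test category. For total asphericity, I would use Proposition \ref{prop:totalementAspheriqueEquivalences}: it suffices to show that the diagonal $\Theta_n \to \Theta_n\times\Theta_n$ is aspherical. Alternatively, by Proposition \ref{totAspheriqueMorphismeAspherique}, it is enough to produce an aspherical functor from a known totally aspherical category, such as $\Delta^n$ (which is totally aspherical because $\Delta$ is), into $\Theta_n$. Indeed, if $\Delta^n \to \Theta_n$ is aspherical, then $\Theta_n$ is totally aspherical. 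Proposition \ref{propFonctAspheriqueSourceTotAsphTest} then says that as soon as $\Theta_n$ is a local test category, both are strict test categories.

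The plan is an induction on $n$ using the wreath product. The functor $\mu_A: \Delta\times A \to \Delta\wr A$ of \ref{def:mu_A} is the candidate. Composing $\Delta\times\mu_{\Theta_{n-1}}$-type functors inductively gives $m_n:\Delta^n\to\Theta_n$. The key lemma is that if $A$ is aspherical (say, it has a final object, or more generally is aspherical), then $\mu_A$ is aspherical. One checks this on slices $\tranche{(\Delta\times A)}{[\Delta_m;(a_j)]}$. An object of this slice is a triple $(\Delta_k, a, [\varphi,\mathbf f])$, where $\mathbf f$ consists of maps $a\to a_j$ for $j$ in the image intervals. I would try to show that this slice retracts, via a Quillen Theorem A/homotopy argument, onto a product of a slice of $\Delta$ with a category like $\tranche{A}{(a_j)_j}$. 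I would then use the total asphericity of $A$ (inductively available), since the latter is a category of objects with maps to a finite family of objects, i.e. the category of elements of a product of representables. This is exactly where total asphericity of $\Theta_{n-1}$ feeds the induction.

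For the local test property, I would apply Theorem \ref{thm:EquivalencesFoncteursTestLocal} with $i=i_{\Theta_n}$, or more cheaply invoke Proposition \ref{propFonctAspheriqueSourceTotAsphTest} after verifying local test directly: $\Theta_n$ has a "Lawvere-type" interval (the object $[\Delta_1;\Theta_0]$ viewed as a separating interval with two disjoint endpoints), and the Grothendieck criterion says a category whose presheaf topos admits a separating aspherical interval and whose representables are locally aspherical is a local test category. For $\Theta$, being a filtered union of the $\Theta_n$ with full inclusions, asphericity of slices and of the diagonal passes to the colimit since $\W_\infty$ is stable under filtered colimits of categories. Thus total asphericity and the interval argument carry over.

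The main obstacle is the asphericity of the slices of $\mu_A$ (equivalently of $m_n$). The combinatorics of morphisms in $\Delta\wr A$, with families $f_{ji}$ indexed over the intervals $\varphi(i-1)<j\leq\varphi(i)$, make the slice categories awkward. I would first try to exhibit a final-object-like reflective subcategory of each slice, namely those $(\Delta_k,a,[\varphi,\mathbf f])$ with $\varphi$ injective onto a single interval. Failing that, I would fall back on the published argument of Cisinski–Maltsiniotis \cite{cisinski2011theta}.
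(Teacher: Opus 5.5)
The paper gives no argument for this statement; it only cites \cite{cisinski2011theta}. Your proposal is therefore a genuinely different, self-contained route, and its skeleton is sound:
\begin{itemize}
\item $\Theta_0=e$ is totally aspherical.
\item If $\Theta_{n-1}$ is totally aspherical, so is $\Delta\times\Theta_{n-1}$: it is aspherical, and its diagonal is, up to a shuffle, a product of aspherical functors.
\item Hence $\Theta_n$ is totally aspherical by Proposition~\ref{totAspheriqueMorphismeAspherique}, once $\mu_{\Theta_{n-1}}$ is known to be aspherical.
\item The interval given by the image of $\Delta_1$ under $\Delta=\Theta_1\hookrightarrow\Theta_n$, with endpoints the two cofaces from the terminal object $[\Delta_0]$, gives the local test property. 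It is separating because, for every $t$, the two composites $t\to[\Delta_0]\rightrightarrows[\Delta_1]$ have distinct constant underlying maps in $\Delta$.
\item $\Theta$ follows because the inclusions are full. The category of elements of $t\times t'$ in $\pref{\Theta}$ is then the increasing union of the corresponding categories over the $\Theta_n$, and a filtered union of aspherical categories is aspherical.
\end{itemize}
Compared with the bare citation, this uses only the toolkit of Section~\ref{sec:testCategories} plus one lemma on wreath products. It also yields Proposition~\ref{prop:m_nAspherical} as a by-product, since $m_n=\mu_{\Theta_{n-1}}\circ(\Delta\times m_{n-1})$. So prove the asphericity of $\mu_A$ directly rather than taking your ``alternative'' shortcut through Proposition~\ref{prop:m_nAspherical}. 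That shortcut would require checking that Ara--Maltsiniotis do not themselves rely on \cite{cisinski2011theta}.

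Two steps need tightening.

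\emph{The slice of $\mu_A$.} The slice over $[\Delta_m;(a_1,\dots,a_m)]$ is not a product of a slice of $\Delta$ with a single category of elements. Its objects amount to $(\Delta_k,a,\varphi,(f_j)_{\varphi(0)<j\leq\varphi(k)})$ with $f_j\colon a\to a_j$; this works because each such $j$ lies in exactly one interval $(\varphi(i-1),\varphi(i)]$. The projection to $\tranche{\Delta}{\Delta_m}$ is a Grothendieck fibration, with cartesian lifts $(\psi,\id_a)$ obtained by restricting the family. Its fibre over $\varphi$ is $\tranche{A}{\prod_{\varphi(0)<j\leq\varphi(k)}a_j}$, which is $A$ itself for the empty product. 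A fibre of a fibration is homotopy equivalent to the corresponding coslice, since the inclusion has a right adjoint given by cartesian lifting. So aspherical fibres make the projection a weak equivalence, by Theorem~A applied to the opposite functor, and $\tranche{\Delta}{\Delta_m}$ has a final object. This is exactly where your argument uses total asphericity of $A$ rather than mere asphericity, which is harmless because the induction supplies it. Your reflective subcategory of interval inclusions does exist: reflect $\varphi$ to the inclusion of $[\varphi(0),\varphi(k)]$, keeping $a$ and the family. But it has no final object, and after reflecting you still need the same fibrewise argument over the poset of subintervals.

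\emph{The local test criterion.} The criterion to invoke is Grothendieck's: $A$ is a local test category as soon as $\pref{A}$ has a separating, \emph{locally} aspherical interval \cite{maltsiniotis2005}. Here the interval is locally aspherical because it is representable and $\Theta_n$ is totally aspherical. The last proposition of Section~\ref{sec:testCategories} then upgrades local test to strict test. Your alternative through Theorem~\ref{thm:EquivalencesFoncteursTestLocal} with $i=i_{\Theta_n}$ is not cheaper: given total asphericity, it amounts to the weak test property itself.
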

\begin{proof}
See \cite{cisinski2011theta}.
\end{proof}

\begin{paragr}\label{def:m_n} 
For any integer $n\geq0$, we define a functor
\begin{align*}
m_n: \Delta^n &\to \Theta_n 
\end{align*}
inductively in the following way:
\begin{itemize}
\item $m_0$ is the identity on $e=\Delta^0=\Theta_0$;
\item $m_{n+1}$ is the composite
\[
\Delta^{n+1} = \Delta \times \Delta^n \xrightarrow{\Delta\times m_n}
\Delta\times\Theta_n \xrightarrow{\mu_{\Theta_n}} \Theta_{n+1} \pbox{.}
\]
\end{itemize}
\end{paragr}

\begin{prop}[Ara-Maltsiniotis]
\label{prop:m_nAspherical}
For any integer $n\geq 0$, 
\[
m_n: \Delta^n \to \Theta_n 
\]
is an aspherical functor.
\end{prop}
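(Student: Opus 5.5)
We argue by induction on $n$. The case $n=0$ is trivial, since $m_0$ is an identity and identities are aspherical. For the inductive step, $m_{n+1}$ factors as $\mu_{\Theta_n}\circ(\Delta\times m_n)$. Aspherical functors are stable under composition (a standard consequence of Proposition~\ref{prop:MorphismesAspheriquesLocFond}: if $v\circ w$ is a composite of aspherical functors, then $w^*v^*(b)$ is aspherical because $v^*(b)$ is, by condition (2) applied to $w$). It therefore suffices to prove two facts.
\begin{enumerate}
\item If $u$ is aspherical, then $\Delta\times u$ is aspherical.
\item For every small category $A$, the functor $\mu_A:\Delta\times A\to\Delta\wr A$ is aspherical.
\end{enumerate}

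For (1), fix an object $(\Delta_k,c)$ of $\Delta\times\Theta_n$. The slice $\tranche{(\Delta\times\Delta^n)}{(\Delta_k,c)}$ is the product $\tranche{\Delta}{\Delta_k}\times\tranche{\Delta^n}{c}$, and $\tranche{(\Delta\times u)}{(\Delta_k,c)}$ is $\id\times\tranche{u}{c}$. Since $\W_\infty$ is stable under products (the nerve commutes with products, and simplicial weak equivalences are stable under products), this functor is in $\W_\infty$, so $\Delta\times u$ is aspherical.

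The main step is (2). We fix $[\Delta_m;(a_j)_j]$ in $\Delta\wr A$ and must show that the category $\tranche{(\Delta\times A)}{[\Delta_m;(a_j)]}$ is aspherical. Its objects are triples $(\Delta_k,a,[\varphi,\mathbf f])$ with $\varphi:\Delta_k\to\Delta_m$ and $f_{ji}:a\to a_j$ for every $j$ covered by $\varphi$. We view this category as the category of elements of a presheaf $X$ on $\Delta\times A$, namely $X(\Delta_k,a)=\mu_A^*([\Delta_m;(a_j)])$. The plan is to use Quillen's Theorem~A, or equivalently the fibration over $\Delta$ given by forgetting $a$: for a fixed $\varphi$, the fibre in the $A$-direction is the category of elements of the product of representables $\prod_{j\in S(\varphi)}a_j$, where $S(\varphi)$ is the set of indices $j$ with $\varphi(0)<j\leq\varphi(k)$.

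A cleaner route is to use that $\Delta$ is totally aspherical, together with a retraction or homotopy argument. We would first try to exhibit a zig-zag of natural transformations contracting the category, based on the map to the object $\Delta_0\to\Delta_m$ picking the vertex $0$. For this object the family $\mathbf f$ is empty, and any $a$ qualifies. The candidate homotopy is $\varphi\Rightarrow\varphi'$, where $\varphi'$ is obtained from $\Delta_k\sqcup\Delta_0$ (prepending the vertex $0$) so that $S$ grows monotonically. This step is the main obstacle: the $A$-component lacks a terminal object, because products of representables are not representable. We therefore expect to need that the category of elements $\tranche{A}{\prod_j a_j}$ is not required to be aspherical; instead, one should contract only along the $\Delta$-direction. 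Concretely, we would use the functor sending $(\Delta_k,a,\varphi,\mathbf f)$ to $(\Delta_0,a,0,\emptyset)$ and connect it to the identity through the object $(\Delta_{k+1},a,\varphi\ast 0,\mathbf f)$, using the natural inclusions $\Delta_k\to\Delta_{k+1}\leftarrow\Delta_0$. This shows that the category has the same weak homotopy type as $\tranche{(\Delta\times A)}{[\Delta_0]}$ restricted to the $\Delta_0$-part, which is $A\times\tranche{\Delta}{\Delta_0}\simeq A$.

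Since $A$ need not be aspherical, this reveals that we must instead use the trick for $\Theta_n$, where $A=\Theta_n$ has a terminal object. Alternatively, the contraction should land on objects with $\mathbf f$ nonempty. We would redo the homotopy so that its endpoint is the object $(\Delta_m,\ast,\id)$ when $A$ has a final object $\ast$, which holds for every $\Theta_n$ (the final object being $\Delta_0$, or $e$). This suffices for the induction, and is presumably what Ara and Maltsiniotis do.
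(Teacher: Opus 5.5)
\textbf{There is a genuine gap: the asphericity of $\mu_{\Theta_n}$ is never proved.} Your reduction is correct. Aspherical functors compose, and $\Delta\times u$ is aspherical when $u$ is, since its slices are $\tranche{\Delta}{\Delta_k}\times\tranche{\Delta^n}{c}$. So everything rests on the asphericity of $\mu_{\Theta_n}$, and that is the step your proposal never establishes. (The paper does not prove the statement itself; it cites Ara--Maltsiniotis, Corollary~5.5, so a self-contained proof needs exactly this lemma.)

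\textbf{Claim (2) is false as stated.} $[\Delta_0]$ is a final object of $\Delta\wr A$, so $\tranche{(\Delta\times A)}{[\Delta_0]}$ is all of $\Delta\times A$, which is aspherical only when $A$ is. The right lemma is ``$\mu_A$ is aspherical if $A$ is aspherical''. That suffices for you, because $[\Delta_0]$ is final in $\Theta_n$.

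\textbf{Your contractions do not work.} The object $(\Delta_{k+1},a,\varphi\ast 0,\mathbf f)$ is not defined: $\varphi\ast 0$ covers the indices $1\leq j\leq\varphi(0)$, and $\mathbf f$ provides no map $a\to a_j$ for them. The conclusion that the slice has the homotopy type of $A$ is also wrong: for $m=1$, $A$ discrete on two objects $x,y$ and $a_1=x$, the slice has three connected components. Nor is $(\Delta_m,\ast,\id)$ an object of the slice. You also need maps $\ast\to a_j$, which in $\Theta_n$ exist but are not unique in general. Moreover, an object $(\Delta_k,a,\varphi,\mathbf f)$ maps to such an object only if all the $f_j$ factor through $\ast$. ``Presumably what Ara and Maltsiniotis do'' is not an argument.

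\textbf{How to close the gap.} You need an actual computation of $E=\tranche{(\Delta\times A)}{[\Delta_m;(a_j)]}$ for aspherical $A$. One route is as follows. For fixed $a$, the pairs $(\varphi,\mathbf f)$ with $\varphi:\Delta_k\to\Delta_m$ are exactly the $k$-simplices of the nerve of the free category $\mathcal C_a$ on the graph with vertices $0,\dots,m$ and $\Hom_A(a,a_j)$ arrows from $j-1$ to $j$. This nerve is weakly equivalent, naturally in $a$, to that graph, i.e.\ to the homotopy colimit of the zigzag of discrete sets $\{0\}\leftarrow\Hom_A(a,a_1)\to\{1\}\leftarrow\cdots\to\{m\}$.

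The projection $E\to A$ is a Grothendieck fibration; its fibre over $a$ is the category of simplices of this nerve. Thomason's theorem and the commutation of homotopy colimits then identify $E$, up to weak equivalence, with the homotopy colimit of $A\leftarrow\tranche{A}{a_1}\to A\leftarrow\cdots\leftarrow\tranche{A}{a_m}\to A$. Each $\tranche{A}{a_j}$ has a final object, so this is aspherical whenever $A$ is. Taking $A=\Theta_n$ gives the asphericity of $\mu_{\Theta_n}$, and your induction then goes through.
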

\begin{proof}
See \cite[Corollary 5.5]{ara2022compnerfs}. 
\end{proof}

\begin{coro}
For any integer $n\geq 1$, $\Theta_n$ is a homologically pseudo-test
category.
\end{coro}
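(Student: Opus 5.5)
The plan is to apply Corollary~\ref{coro:fonctAspheriqueTestLocalWhiteheadPsTestHomEq} to the aspherical functor $m_n : \Delta^n \to \Theta_n$ of Proposition~\ref{prop:m_nAspherical}, taking $A = \Delta^n$ and $B = \Theta_n$. For this I need four facts: $\Delta^n$ is a strong Whitehead category, $\Delta^n$ is a local test category, $\Theta_n$ is a local test category, and $\Delta^n$ is homologically pseudo-test.

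\begin{enumerate}
\item \textbf{$\Delta^n$ is strong Whitehead.} $\Delta$ is strong Whitehead by Example~\ref{ex:DeltaWhitehead}, and $\Delta$ is totally aspherical, so the diagonal $\Delta \to \Delta^n$ is aspherical. For $n = 2$ this is Proposition~\ref{prop:totalementAspheriqueEquivalences}; for general $n$, iterate it, using that aspherical functors are stable under composition and under products with identities. Proposition~\ref{prop:aspheriqueSourceWhitehead} then gives that $\Delta^n$ is strong Whitehead, as already noted in Example~\ref{ex:WhiteheadCategories}.

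\item \textbf{$\Delta^n$ is a local test category.} Since $\Delta$ is totally aspherical and the diagonal $\Delta \to \Delta^n$ is aspherical, Proposition~\ref{totAspheriqueMorphismeAspherique} shows that $\Delta^n$ is totally aspherical. By the proposition characterizing totally aspherical categories, it then suffices that $\Delta^n$ be a weak test category. I would get this from the fact that products of test categories are test categories, or directly from Proposition~\ref{propFonctAspheriqueSourceTotAsphTest} applied to $m_n$: once $\Theta_n$ is known to be a local test category, $\Delta^n$ is a strict test category.

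\item \textbf{$\Theta_n$ is a local test category.} This follows from Theorem~\ref{thm:ThetaTestStricte}.

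\item \textbf{$\Delta^n$ is homologically pseudo-test.} This is the remaining point, and the main one. Two routes are available.
\begin{itemize}
\item The more economical route bypasses the corollary entirely. $\Theta_n$ is a test category by Theorem~\ref{thm:ThetaTestStricte}. It is strong Whitehead by Proposition~\ref{prop:aspheriqueSourceWhitehead}, applied to $m_n$ together with item~1. Theorem~\ref{thm:WhiteheadTestHomPseudoTest} then applies directly to $\Theta_n$.
\item Alternatively, $\Delta^n$ is strong Whitehead (item~1) and a test category (item~2), so Theorem~\ref{thm:WhiteheadTestHomPseudoTest} makes it homologically pseudo-test. The corollary then transfers this to $\Theta_n$.
\end{itemize}
\end{enumerate}

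I would present the first route as the proof: $m_n$ is aspherical and $\Delta^n$ is strong Whitehead, hence $\Theta_n$ is strong Whitehead; $\Theta_n$ is a test category; therefore Theorem~\ref{thm:WhiteheadTestHomPseudoTest} concludes.

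The only step needing care is the claim that $\Delta^n$ is strong Whitehead for all $n$, that is, the asphericity of the iterated diagonal. The cleanest way is to factor $\Delta \to \Delta^{n}$ as $\Delta \to \Delta\times\Delta \to \Delta \times \Delta^{n-1}$, with the second map equal to $\id \times (\text{diagonal})$, and to argue by induction using that a product of an aspherical functor with an identity is aspherical.
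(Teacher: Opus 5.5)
Your final argument is correct and is essentially the paper's proof. The paper also combines the asphericity of the diagonal $\Delta \to \Delta^n$ (from the total asphericity of $\Delta$) with that of $m_n$, uses Proposition~\ref{prop:aspheriqueSourceWhitehead} to make $\Theta_n$ strong Whitehead, and concludes with Theorem~\ref{thm:ThetaTestStricte} and Theorem~\ref{thm:WhiteheadTestHomPseudoTest}. The only cosmetic difference is that the paper applies Proposition~\ref{prop:aspheriqueSourceWhitehead} once, to the composite $\Delta \to \Delta^n \to \Theta_n$, rather than twice through $\Delta^n$; your item~2 and the corollary-based route are therefore unnecessary.
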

\begin{proof}
Since $\Delta$ is totally aspherical, the diagonal functor $\Delta \to
\Delta^n$ is aspherical. The composite functor $\Delta \to \Delta^n \to
\Theta_n$ is then also aspherical. Moreover, $\Delta$ is a strong Whitehead
category, and Proposition \ref{prop:aspheriqueSourceWhitehead} implies
that $\Theta_n$ is a strong Whitehead category. Since, by Proposition
\ref{thm:ThetaTestStricte}, $\Theta_n$ is also a test category, this
is a particular case of Theorem \ref{thm:WhiteheadTestHomPseudoTest}.
\end{proof}

We will now prove that $\Theta$ is also a homologically pseudo-test
category. For this, we will use a variation on the notion of aspherical
functors.

\begin{paragr}
\label{par:defFoncteurAspheriqueÂ} Let $\W\subset\Arr(\Cat)$ be a basic
localizer. If $j: B \to \pref{A}$ is a functor, we get a functor 
\[
j^*: \pref{B}\to \pref{A} \mdvirg X \mapsto 
( b \mapsto \Hom_{\pref{A}}(j(b),X) \pbox{.}
\]
We say that
$j$ is \emph{aspherical} if the following conditions are satisfied:
\begin{enumerate}
\item for every object $b$ of $B$, the presheaf $j(b)$ on $A$ is
aspherical;
\item for every object $b$ of $B$, the presheaf $j^*j(b)$ on $B$ is
aspherical;
\item for every object $a$ of $A$, the presheaf $j^*(a)$ on $B$ is
aspherical.

\end{enumerate}
\end{paragr}

\begin{paragr}
\label{paragr:diagFoncteurAspheriqueÂ} 
Let $j: B \to \pref{A}$ be a functor. Consider the full subcategory
$C$ of $\pref{A}$ whose objects are representables presheaves, and
images by $j$ of objects of $B$. By construction, the diagram

\begin{equation}\label{diagMorphismesAspheriquesStructureÂ}
\xymatrix{
& \pref{A} \\
B \ar[ru]^{j} \ar[r]_{j'} & C \ar@{^{(}->}[u]^{i} & A \ar@{_{(}->}[lu]_{h} \ar[l]^{h'} 
\pbox{,}
} 
\end{equation}
where we denoted by $h$ the Yoneda embedding, is commutative. This
implies that the diagram
\begin{equation}\label{diagMorphismesAspheriquesStructureÂ2}
\xymatrix{
& \pref{A} \ar[ld]_{j^*} \ar[d]^{i^*} \ar[rd]^{\id} \\
\pref{B} & \pref{C} \ar[l]^{j'^*} \ar[r]_{h'^*} & \pref{A}
} 
\end{equation}
is also commutative. 
\end{paragr}

\begin{prop}\label{propFoncteurAspheriqueZigZagMorphAsph}
In diagram \ref{diagMorphismesAspheriquesStructureÂ}, if $j$ is an
aspherical functor, then $j'$ and $h'$ are aspherical.
\end{prop}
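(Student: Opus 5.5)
The plan is to check, for each of $h'$ and $j'$, criterion (3) of Proposition \ref{prop:MorphismesAspheriquesLocFond}. That criterion says a functor $w : D \to C$ is aspherical as soon as, for every object $c$ of $C$, the presheaf $w^*(c)$ on $D$ is aspherical.

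The key observation is that the representable presheaf on $C$ associated with an object $c$ is $i^*(c)$. Here $c$ is regarded as a presheaf on $A$, since $C$ is a full subcategory of $\pref{A}$. Indeed, for any object $c'$ of $C$ we have
\[
i^*(c)(c') = \Hom_{\pref{A}}(i(c'),c) = \Hom_C(c',c)
\]
by fullness of $i$. Commutativity of diagram \ref{diagMorphismesAspheriquesStructureÂ2} then gives two identifications:
\[
h'^*(c) = h'^*i^*(c) = c \quad\text{and}\quad j'^*(c) = j'^*i^*(c) = j^*(c)
\]
as presheaves on $A$ and on $B$ respectively. The first one can also be checked directly by the Yoneda lemma: $h'^*(c)(a) = \Hom_{\pref{A}}(a,c) = c(a)$.

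We now distinguish the two kinds of objects of $C$.

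\emph{The functor $h'$.} If $c = a$ is representable, then $h'^*(c) = a$ is a representable presheaf, which is aspherical by paragraph \ref{par:defAspherique}. If $c = j(b)$, then $h'^*(c) = j(b)$ is aspherical by condition (1) of the definition of an aspherical functor $j$. Hence $h'$ is aspherical.

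\emph{The functor $j'$.} If $c = a$ is representable, then $j'^*(c) = j^*(a)$ is aspherical by condition (3). If $c = j(b)$, then $j'^*(c) = j^*j(b)$ is aspherical by condition (2). Hence $j'$ is aspherical.

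The only step needing care is the identification of representables of $\pref{C}$ with $i^*$ of the corresponding presheaves, which rests on the fullness of $C$ in $\pref{A}$. Everything else is immediate: each of the three conditions in the definition of an aspherical functor $j : B \to \pref{A}$ is used exactly once.
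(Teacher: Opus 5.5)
Your proof is correct and follows essentially the paper's argument. Both use criterion (3) of Proposition~\ref{prop:MorphismesAspheriquesLocFond} and the identifications $h'^*(c)\simeq h'^*i^*i(c)\simeq i(c)$ and $j'^*(c)\simeq j^*i(c)$, which come from $i$ being fully faithful. Both then split into the same two cases, representable objects and objects of the form $j(b)$, invoking conditions (1), (3) and (2) of the definition exactly as you do.
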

\begin{proof}
Let $c$ be an object of $C$. Recall from Proposition
\ref{prop:MorphismesAspheriquesLocFond} that we only need to show that
$j'^*(c)$ and $h'^*(c)$ are aspherical presheaves. Since $i$ is fully
faithful, there is a natural isomorphism $i^*i(c) \simeq c$ in
$\pref{C}$, which induces a natural isomorphism
\[
  h'^*(c) \simeq h'^*i^*i(c) \simeq i(c) \pbox{.}
\]
We distinguish two cases: if $c$ is an object of $A$, then $i(c)$ is a
representable presheaf, which implies that it is aspherical. If $c$ is
a $j'(b)$ for an object $b$ of $B$, then $i(c)=j(b)$ is an aspherical
functor by the first condition of paragraph \ref{par:defFoncteurAspheriqueÂ}. In
both cases, $h'^*(c)$ is an aspherical presheaf on $A$, which proves
that $h'$ is aspherical.

We proceed in the same way to show that $j$ is aspherical: we have a
natural isomorphism
\[
j'^*(c) \simeq j'^*i^*i(c) \simeq j^*i(c)
\]
and we distinguish two cases: either $c$ is an object of $A$, in which
case $j^*i(c)$ is aspherical by condition $3$ of paragraph
\ref{par:defFoncteurAspheriqueÂ}, or $c$ is a $j'(b)$ for an
object $b$ of $B$, in which case $j^*ij'(b)=j^*j(b)$ is aspherical by
condition $2$. We have proved that $j'$ is also an aspherical functor.
\end{proof}

\begin{prop}\label{prop:FoncteurAspheriqueÂSourceWhitehead}
Let $A$ be a small category and $B$ a strong Whitehead category. If $j: B
\to \pref{A}$ is an aspherical functor, then $A$ is a strong Whitehead
category.
\end{prop}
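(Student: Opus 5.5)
The plan is to use the zig-zag of diagram \ref{diagMorphismesAspheriquesStructureÂ}, that is $B \xrightarrow{j'} C \xleftarrow{h'} A$. By Proposition \ref{propFoncteurAspheriqueZigZagMorphAsph}, both $j'$ and $h'$ are aspherical functors. Since $B$ is a strong Whitehead category and $j'$ is aspherical, Proposition \ref{prop:aspheriqueSourceWhitehead} shows that $C$ is a strong Whitehead category. What remains is to move the strong Whitehead property backwards along the aspherical functor $h': A \to C$, from its target $C$ to its source $A$. This is the reverse of the direction handled by Proposition \ref{prop:aspheriqueSourceWhitehead}, so the essential extra input has to be that $h'^*$ is essentially surjective, and in fact a retraction. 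From diagram \ref{diagMorphismesAspheriquesStructureÂ2} we have $h'^*i^* = \id_{\pref{A}}$.

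Concretely, take a morphism $f: X \to Y$ in $\prefab{A}$. Right adjoints preserve abelian group objects, so $i^*$ induces $(i^*)^\ab : \prefab{A} \to \prefab{C}$. The squares \ref{diag:invImageCommutationsOubli}, applied to $h'$, then give $(h'^*)^\ab (i^*)^\ab f \simeq f$ and $\U (i^*)^\ab f = i^* \U f$. Set $g = (i^*)^\ab f$. I would then run the chain
\begin{align*}
f \in \Wab_A &\iff (h'^*)^\ab g \in \Wab_A \iff g \in \Wab_C \iff \U g \in \W_C \\
&\iff h'^* \U g \in \W_A \iff \U f \in \W_A \pbox{.}
\end{align*}
The steps are justified as follows. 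The second equivalence is condition (5) of the proposition characterising $\Wab_\infty$-aspherical functors; it applies because $h'$ is aspherical and hence $\Wab_\infty$-aspherical by Proposition \ref{prop:asphericalAbAspherical}. The third equivalence is the strong Whitehead condition for $C$. The fourth is condition (5) of Proposition \ref{prop:MorphismesAspheriquesLocFond} for $h'$, using $\U (h'^*)^\ab = h'^* \U$. The last uses $h'^* i^* \U f = \U f$.

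The main point requiring care is the identity $h'^*i^* \simeq \id$ at the level of abelian presheaves, and the compatibility of $(i^*)^\ab$ with $\U$. Both follow formally because $h'^*$ and $i^*$ are right adjoints commuting with the forgetful functors, and $i h' = h$ is the Yoneda embedding. So I expect no real obstacle. One should only make sure that the isomorphism $(h'^*)^\ab g \simeq f$ is natural, so that membership in $\Wab_A$ transfers along it; this holds because $\Wab_A$ is closed under isomorphism of arrows.
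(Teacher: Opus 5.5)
Your proposal is correct and is essentially the paper's proof. It uses the same zig-zag $B \xrightarrow{j'} C \xleftarrow{h'} A$, and it applies Proposition~\ref{prop:aspheriqueSourceWhitehead} to make $C$ strong Whitehead. It then runs the same chain of equivalences, relying on the retraction $h'^*i^* \simeq \id$ (and its abelian version) together with the $\W_\infty$- and $\Wab_\infty$-asphericity of $h'$.
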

\begin{proof}
We keep the notation of paragraph \ref{paragr:diagFoncteurAspheriqueÂ}. 
Since $i^*$ and ${h'^*}$ are right adjoint functors, they preserve
abelian group objects and induce functors 
\[
(i^*)^{\ab}: \prefab{A}\to\prefab{C} \mdvirg (h'^*)^{\ab}: \prefab{C} \to
\prefab{A}
\]
such that, in diagram
\[
\xymatrix{
\pref{A} \ar[d]_{\Whf{A}} \ar[r]^{i^*}  &
\pref{C} \ar[d]^{\Whf{C}} \ar[r]^{h'^*} &
\pref{A} \ar[d]^{\Whf{A}} \\
\prefab{A} \ar[d]_{\U_{A}} \ar[r]_{(i^*)^{\ab}}
&
\prefab{C} \ar[d]^{\U_{C}} \ar[r]_{(h'^*)^{\ab}} &
\prefab{A} \ar[d]^{\U_{A}} \\
\pref{A}  \ar[r]_{i^*} &
\pref{C}  \ar[r]_{h'^*} &
\pref{A} \pbox{,}
} 
\]
every square except the top-left one are commutative.
Since $h'^*i^* = \id$, the exterior square is also commutative. Notice
also that we have $(h'^*)^{\ab}(i^*)^{\ab} = \id$.

By Proposition \ref{prop:aspheriqueSourceWhitehead}, we know that $C$
is a strong Whitehead category, since we proved that $j'$ is aspherical.
Moreover, $h'$ is also an aspherical morphism. We can then conclude by
the following chain of equivalences, for any morphism $f$
of $\pref{A}$:
\begin{align*}
f \in \Wab_A &\iff (h'^*)^{\ab}(i^*)^{\ab}f \in \Wab_A 
\\&\iff (i^*)^{\ab}f \in \Wab_C  &\text{ (asphericity for $h'$)}
\\&\iff \U_C (i^*)^{\ab}f \in \W_C &\text{ (strong Whitehead condition for $C$)}
\\&\iff i^* \U_Af \in \W_C &\text{ (commutativity of bottom-left square)}
\\&\iff h'^*i^*\U_Af \in \W_A     &\text{ (asphericity for $h'$)}
\\&\iff \U_A(f)\in\W_A \pbox{.}  
&& \qedhere
\end{align*}
\end{proof}

We will now show that $\Theta$ is a strong Whitehead category. For this, we just
point out the existence of an aspherical functor $\Delta \to \pref{\Theta}$ and
refer to the literature for an explicit description, as our purpose is only to
apply proposition \ref{prop:FoncteurAspheriqueÂSourceWhitehead}.

\begin{paragr}
\label{paragrCompNerfs}

Denote by $\omega\Cat$ the category of strict~$\omega$-categories and strict
$\omega$-functors. There is an inclusion~$\Theta \hookrightarrow \wcat$ (see
\cite{makkai2001duality}, \cite{berger2002cellular}) that allows to define a fully faithful functor called
the \emph{cellular nerve}
\[
\nerf_{\Theta}: \wcat \to \pref{\Theta} \pbox{.}
\]
Moreover, Street introduced in \cite{street1987algebra} a functor
$\orient: \Delta \to \wcat$, which induces a functor 
\[
\nerf: \wcat \to \pref{\Delta} 
\]
called the \emph{Street nerve}.
In \cite{ara2022compnerfs}, Ara and Maltsiniotis prove that these two
nerve functors are equivalent, meaning that the square
\[
\xymatrix{
\wcat \ar[r]^{\nerf_\Theta} \ar[d]_{\nerf} & \pref{\Theta}
\ar[d]^{i_{\Theta}} \\
\pref{\Delta} \ar[r]_{i_{\Delta}} & \Hot
} 
\]
is commutative up to a natural isomorphism. One of the steps of their
proof is the following:
\end{paragr}

\begin{theorem}[Ara-Maltsiniotis]
The functor
\[
j: \Delta \xrightarrow{\orient} \wcat \xrightarrow{N_\Theta} \pref{\Theta} 
\]
is an aspherical functor, within the meaning of paragraph
\ref{par:defFoncteurAspheriqueÂ}.
\end{theorem}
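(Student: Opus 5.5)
We have to check the three conditions of paragraph \ref{par:defFoncteurAspheriqueÂ} for $j = \nerf_\Theta\circ\orient$. In each case we reduce to statements about strict $\omega$-categories, using the comparison of nerves recalled in paragraph \ref{paragrCompNerfs}.

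\emph{Condition (1).} For each $n$, the presheaf $j(\Delta_n)=\nerf_\Theta(\orient_n)$ must be aspherical on $\Theta$. The oriental $\orient_n$ has a terminal object in the appropriate lax sense: its top vertex $n$. Our plan is to exhibit a contraction of $i_\Theta\nerf_\Theta(\orient_n)$ by a zigzag of natural transformations, coming from the cone on $\orient_n$ with apex $n$. Alternatively, we can compare with the Street nerve. Since $\nerf(\orient_n)$ contains $\Delta_n$ and is built from joins $\orient_n=\orient_{n-1}\star\Delta_0$, the join structure gives an explicit cone contraction.

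\emph{Condition (3).} For each object $T$ of $\Theta$, the presheaf $j^*(T)$ is $\Delta_n\mapsto\Hom_{\wcat}(\orient_n,T)$, using that $\nerf_\Theta$ is fully faithful and $T$ is an $\omega$-category. This is exactly the Street nerve $\nerf(T)$. So we must show that the Street nerve of a globular pasting scheme (a Joyal disk) is weakly contractible. We would proceed by induction on the wreath-product description $\Theta_{n+1}=\Delta\wr\Theta_n$. The object $T=[\Delta_k;(T_i)]$ is a pushout, a gluing of suspensions $\Sigma T_i$ along their endpoints. We would use two facts: the Street nerve sends these globular sums to homotopy pushouts, and the suspension of a contractible $\omega$-category has contractible nerve.

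\emph{Condition (2).} The presheaf $j^*j(\Delta_n)=\nerf(\orient_n)$ on $\Delta$ must be aspherical. Since $\orient_n$ is a free $\omega$-category on a contractible polygraph (Steiner's theory), its Street nerve is contractible. Concretely, we can use Steiner's equivalence between $\omega$-categories with loop-free bases and augmented directed complexes. Under this equivalence, $\orient_n$ corresponds to the normalized chain complex of $\Delta_n$, and Ara–Maltsiniotis's comparison then identifies the homotopy type of its nerve.

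\emph{Main obstacle.} We expect condition (3) to be the hardest step: the asphericity of the Street nerve of every $\Theta$-object. Here the homotopy-pushout behaviour of the Street nerve with respect to globular sums must be established, or a Quillen Theorem~A argument must be run on a suitable comparison functor. The first approach we would try is a direct inductive Theorem~A argument, using the slice description of $\Delta$-simplices of $T$ together with the decomposition $T=\Sigma T_1\amalg_{\ast}\cdots\amalg_\ast\Sigma T_k$.
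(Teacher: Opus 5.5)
The paper gives no argument for this statement; it imports it from \cite[Theorem~5.8]{ara2022compnerfs}. Your identification of the presheaves in conditions (2) and (3) is right. By full faithfulness of $\nerf_\Theta$ and of $\Theta\hookrightarrow\wcat$, $j^*j(\Delta_n)$ is the Street nerve $\nerf(\orient_n)$ and $j^*(T)$ is $\nerf(T)$.

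The genuine gap is condition (1), the only one in which the cellular nerve actually occurs. What you write under (1) is an argument about the Street nerve, not the cellular one. Composing $f\star\orient_0\colon\orient_{m+1}\simeq\orient_m\star\orient_0\to\orient_n\star\orient_0\simeq\orient_{n+1}$ with $\orient$ of the codegeneracy $\Delta_{n+1}\to\Delta_n$ gives a cone contraction of $\nerf(\orient_n)=j^*j(\Delta_n)$. That addresses (2), not (1). To contract $i_\Theta\nerf_\Theta(\orient_n)=\tranche{\Theta}{\nerf_\Theta(\orient_n)}$ you would need a functorial cone on pairs $(T,\,T\to\orient_n)$ with $T$ in $\Theta$, and none is available:
\begin{itemize}
\item Joins leave $\Theta$: already $\orient_1\star\orient_0\simeq\orient_2$ is not a globular pasting diagram.
\item Ad hoc substitutes, such as appending a $1$-cell from the last object of $T$ to $n$, are not natural. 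Morphisms of $\Theta$ need not preserve last objects, and the relevant triangles in $\orient_n$ commute only up to $2$-cells.
\item Nothing says that lax transformations become homotopies after applying $i_\Theta\nerf_\Theta$.
\end{itemize}
Your fallback ``compare with the Street nerve'', like your opening appeal to paragraph~\ref{paragrCompNerfs}, is circular. The equivalence of the two nerves is deduced from this very theorem: apply Propositions~\ref{propFoncteurAspheriqueZigZagMorphAsph} and~\ref{prop:MorphismesAspheriquesLocFond}(4) to the presheaf $i^*\nerf_\Theta(X)$ on $C$, whose restrictions along $h'$ and $j'$ are $\nerf_\Theta(X)$ and $\nerf(X)$. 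So the asphericity of the cellular nerves of the orientals is not established. That is the part of the cited theorem where the cellular nerve enters and none of the Street-nerve tools apply, so it is the crux, rather than (3).

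Conditions (2) and (3) are also only reduced to unproved claims.
\begin{itemize}
\item For (2), ``free on a contractible polygraph, hence contractible Street nerve'' is not a valid inference. There is no such general theorem, and polygraphic homology is known to differ from the homology of the Street nerve for some free $\omega$-categories. The appeal to Steiner theory plus ``Ara--Maltsiniotis's comparison'' is either too vague or again circular. The cone argument you placed under (1) is the correct one for (2). It relies on the join of strict $\omega$-categories and on an identification $\orient_m\star\orient_0\simeq\orient_{m+1}$ natural in $\Delta_m$, which should be stated and cited.
\item For (3), you would need that the Street nerve takes the globular sums $\Sigma T_1\amalg_\ast\cdots\amalg_\ast\Sigma T_k$ to homotopy pushouts and commutes with suspension up to homotopy. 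These are substantial theorems that you leave open. A shorter route is to relate $\id_T$ to a constant $\omega$-functor by an oplax transformation, since objects of $\Theta$ have a final object in the $\omega$-categorical sense. One then uses that oplax transformations induce homotopies of Street nerves, a result of Ara and Maltsiniotis.
\end{itemize}
As it stands, your proposal is a plan in which (2) and (3) rest on external results and (1) is missing.
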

\begin{proof}
See \cite[Theorem~5.8]{ara2022compnerfs}. 
\end{proof}

\begin{theorem}
The category $\Theta$ is a strong Whitehead and homologically pseudo-test
category. 
\end{theorem}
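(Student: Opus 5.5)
The plan is to combine the two results just established and then invoke the main theorem. First, we apply Proposition~\ref{prop:FoncteurAspheriqueÂSourceWhitehead} with $B=\Delta$, with $A=\Theta$, and with $j=\nerf_\Theta\circ\orient : \Delta\to\pref{\Theta}$. The hypotheses hold: $\Delta$ is a strong Whitehead category by Example~\ref{ex:DeltaWhitehead}, and $j$ is aspherical in the sense of paragraph~\ref{par:defFoncteurAspheriqueÂ} by the theorem of Ara and Maltsiniotis quoted just above. The proposition then gives directly that $\Theta$ is a strong Whitehead category, meaning $\Wab_\Theta=\U^{-1}\W_\Theta$.

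Second, by Theorem~\ref{thm:ThetaTestStricte} (Cisinski--Maltsiniotis), $\Theta$ is a strict test category, and in particular a test category. Theorem~\ref{thm:WhiteheadTestHomPseudoTest} applies to any category that is both strong Whitehead and test. It therefore shows that $\Theta$ is a homologically pseudo-test category, i.e.\ that $\overline{\Hf{\Theta}}$ is an equivalence of categories. By the remark following that theorem, the equivalence is even induced by a Quillen equivalence with the projective model structure on $\Ch(\Ab)$.

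No step here needs new work, since all the substantive content sits in the cited results. The point most worth checking is that the version of asphericity verified in \cite[Theorem~5.8]{ara2022compnerfs} really matches the three conditions of paragraph~\ref{par:defFoncteurAspheriqueÂ}:
\begin{enumerate}
\item each $j(\Delta_n)$ is aspherical;
\item each $j^*j(\Delta_n)$ is aspherical;
\item each $j^*(\theta)$, for $\theta$ an object of $\Theta$, is aspherical.
\end{enumerate}
If their statement is phrased differently, for instance as asphericity of the induced functors $j'$ and $h'$ in the zig-zag of diagram~\ref{diagMorphismesAspheriquesStructureÂ}, then I would skip the repackaging. Instead I would rerun the proof of Proposition~\ref{prop:FoncteurAspheriqueÂSourceWhitehead} directly from the asphericity of $j'$ and $h'$, which is exactly what that proof uses.
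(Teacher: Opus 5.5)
Your proposal is correct and matches the paper's proof: the paper likewise applies the proposition on aspherical functors $j : B \to \pref{A}$ with a strong Whitehead source, here with $B=\Delta$, to the Ara--Maltsiniotis functor $\Delta \to \pref{\Theta}$ to show that $\Theta$ is strong Whitehead. It then uses that $\Theta$ is a test category to conclude with Theorem~\ref{thm:WhiteheadTestHomPseudoTest}.
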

\begin{proof}
By Proposition \ref{prop:FoncteurAspheriqueÂSourceWhitehead}, the asphericity of
$j$ implies that $\Theta$ is a strong Whitehead category. Since it is also a
test category, we can conclude with Theorem
\ref{thm:WhiteheadTestHomPseudoTest}.
\end{proof}

\bibliographystyle{amsplain}
\bibliography{biblio}

\end{document}